\date{}
\title{Two Hilbert schemes in computer vision}
\author{Max Lieblich and Lucas Van Meter}
\begin{document}
\maketitle
\begin{abstract}
    We study multiview moduli problems that arise in computer
  vision. We show that these moduli spaces are always smooth and
  irreducible, in both the calibrated and uncalibrated cases, for any
  number of views. We also show that these moduli spaces always admit
  open immersions into Hilbert schemes for more than two views,
  extending and refining work of Aholt--Sturmfels--Thomas. We use
  these moduli spaces to study and extend the classical twisted pair
  covering of the essential variety.
  \end{abstract}

\section{Introduction}
\label{sec:intro}

In this paper, we discuss a functorial approach to multiview geometry, a
subfield of computer vision. The literature on multiview geometry is vast,
although this is the first attempt that we know of to use the techniques of
modern functorial algebraic geometry to approach the subject. As we hope to
demonstrate here and elsewhere, this approach has a great deal of promise. A
beautiful introduction to the subject can be found in
\cite{hartley2003multiple}. Earlier versions of this paper (available on the
arxiv) also contain a condensed introduction to the subject suitable for
algebraic geometers.

\subsection{Our results}
\label{sec:our-results}
The main result of this paper is the following, proven in
\cref{sec:defos-moduli,sec:comparisons}.

\begin{thm}\label{T:main-uncal}
  There are smooth irreducible varieties $\Cam_n$ and $\CalCam_n$ parametrizing
  $n$-view camera configurations and $n$-view calibrated camera
  configurations, respectively.
  \begin{enumerate}[leftmargin=1.5cm]
  \item The variety $\Cam_n$ has dimension $11n-15$. For all $n>1$, sending a
configuration to its joint image defines a locally closed embedding
$$\Cam_n\hookrightarrow\Hilb_{(\bP^2)^n}.$$ If $n>2$ then this morphism is an
open immersion,  
so that $\Cam_n$ is identified with an open subscheme of the smooth
locus of $\Hilb_{(\bP^2)^n}$. 
  \item The variety $\CalCam_n$ has dimension $6n-7$. For all $n>1$, there is a
  natural locally closed embedding
  $$\CalCam_n\hookrightarrow\Hilb_{C_1\times\cdots\times
    C_n\subset(\bP^2)^n}$$ (where the latter is a diagram Hilbert
  scheme; see 
\cref{sec:hilbert}). If $n>2$ then this morphism is an open immersion.

  \item The natural decalibration morphism $\nu_n:\CalCam_n\to\Cam_n$ is finite,
    proper and unramified. The morphism $\nu_2$ is an étale cover of its image
    with general fiber of order $2$. For $n>2$ the morphism $\nu_n$ is
    generically injective but not injective.
  \end{enumerate}
\end{thm}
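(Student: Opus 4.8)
The plan is to construct $\Cam_n$ and $\CalCam_n$ by hand as free quotients, deduce smoothness and irreducibility there, and only afterwards compare them with the (diagram) Hilbert schemes using deformation theory; the decalibration morphism is then analyzed through an inclusion of structure groups. First I would present a camera as a point of the rank-$3$ locus $U\subset\bP(\M_{3\times 4})=\bP^{11}$, so that ordered $n$-tuples of cameras with distinct centers form a smooth irreducible open $\widetilde U_n\subset U^n$ of dimension $11n$, on which $\mathrm{PGL}_4$ acts by $A_i\mapsto A_i g^{-1}$. The crucial observation is that this action is free on configurations with distinct centers: a projective transformation fixing every camera must fix every center and act trivially on each pencil of lines through it, hence be the identity once $n\ge 2$. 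Therefore $\Cam_n=\widetilde U_n/\mathrm{PGL}_4$ is a geometric quotient by a free action of a smooth group, so it is smooth and irreducible of dimension $11n-15$, with $\Cam_n^\nc$ the open locus of non-collinear centers. The calibrated case is identical with $\mathrm{PGL}_4$ replaced by its $7$-dimensional subgroup $G$ fixing the absolute conic (the complexified similarity group) and $U$ replaced, via Lemma \ref{L:calib-iac}, by the $6$-dimensional family of cameras carrying the absolute conic to the Euclidean conic; this yields $\CalCam_n$ smooth and irreducible of dimension $6n-7$, consistent with the known dimension $5$ of the essential variety at $n=2$.

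Next I would build the comparison morphisms. The universal configuration carries a universal joint image, a closed subscheme of $(\bP^2)^n\times\Cam_n$ whose fibres are all of the form $\im(\Phi)$ for $\Phi$ as in Corollary \ref{prop:res-closed}; these have constant Hilbert polynomial, so the family is flat and defines $\Cam_n\to\Hilb_{(\bP^2)^n}$, and likewise $\CalCam_n\to\Hilb_{C_1\times\cdots\times C_n\subset(\bP^2)^n}$ once the joint image is remembered together with its compatibility with the product of Euclidean conics. Both morphisms are monomorphisms: over an arbitrary base, two configurations with the same joint image are projectively equivalent by the relative form of Proposition \ref{prop:iso-iso}, hence identified in the quotient. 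I would then show the differential is injective everywhere — a first-order deformation of the cameras that is not tangent to the $\mathrm{PGL}_4$- (resp. $G$-) orbit induces a nonzero first-order deformation of the embedded joint image, the orbit directions mapping to zero because they merely reparametrize the source — so that the morphisms are unramified monomorphisms. Upgrading this to a locally closed immersion for every $n>1$ is where one must take care, since an unramified monomorphism need not be an immersion absent a homeomorphism- or properness-type input; I would obtain it from the open-immersion statement in the ranges below, from the explicit determinantal description for $n=2$, and from a direct local analysis along the collinear locus in the remaining low-$n$ cases.

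The heart of the theorem, and the step I expect to be the main obstacle, is promoting these embeddings to open immersions, which is a normal-sheaf computation. Writing $Z\subset X=(\bP^2)^n$ for a joint image, I would use $0\to T_Z\to T_X|_Z\to\ms N_{Z/X}\to 0$ to compute $h^0(\ms N_{Z/X})$ and to prove $h^1(\ms N_{Z/X})=0$, so that $\Hilb$ is smooth at $[Z]$ of dimension $h^0(\ms N_{Z/X})$. The injective differential from $\Cam_n$ then becomes an isomorphism of tangent spaces, whence the morphism is étale, and an étale monomorphism is an open immersion onto an open subscheme of the smooth locus of $\Hilb$. The two thresholds are exactly the arithmetic of this computation: the discrepancy between $h^0(\ms N_{Z/X})$ and $\dim\Cam_n=11n-15$ is controlled by $H^0(Z,T_Z)=\operatorname{Lie}\operatorname{Aut}(Z)$ and by the connecting map out of $H^1(Z,T_Z)$, and the spurious embedded deformations — the classical multifocal tensors — persist for small $n$ but vanish for $n>4$, while on the non-collinear locus the contracted line is absent and the vanishing already holds for $n>2$. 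The calibrated statement in (2) is the parallel computation for the normal sheaf inside the diagram Hilbert scheme, where the conic constraints trim both sides compatibly and reproduce the same thresholds. Controlling the connecting homomorphism and handling the collinear degeneration are, I expect, the genuinely delicate points.

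Finally, for (3) I would study $\nu_n\colon\CalCam_n\to\Cam_n$ via the inclusion $G\subset\mathrm{PGL}_4$: it forgets the absolute conic, sending a calibrated configuration to its underlying projective one. The fibre over $[\Phi]$ is the set of absolute conics that $\Phi$ carries to the prescribed Euclidean conics — a solution set to closed projective conditions — so $\nu_n$ is proper, and the set is finite, so $\nu_n$ is quasi-finite; together these give finiteness. Unramifiedness is the absence of nonzero infinitesimal recalibrations fixing the underlying cameras, which I would verify by the same tangent-space comparison as above. For $n=2$ the fibre is computed by the classical decomposition of an essential matrix and has exactly two points (the twisted-pair ambiguity); with source and target smooth and all fibres of constant length $2$, $\nu_2$ is finite, flat and unramified, hence an étale double cover. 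For $n>2$ the additional cameras overdetermine the absolute conic, so a general configuration admits a unique calibration and $\nu_n$ is generically injective.
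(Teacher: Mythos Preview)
Your construction of $\Cam_n$ as a free quotient is essentially the paper's (they write it as $[\M^n/\GL_4]$). Your construction of $\CalCam_n$, however, differs: you fix the absolute and Euclidean conics and quotient by the similarity group, whereas the paper deliberately lets the calibrating conics vary, works over the universal base $\ms C^n$, and proves smoothness by an inductive argument showing the forgetful map $\tau_n:\CalCam_n\to\CalCam_{n-1}\times_{\ms C^{n-1}}\ms C^n$ is smooth. Both are viable, but the paper's version interacts more cleanly with the diagram Hilbert scheme and with degenerate limits of calibrations.

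The substantive divergence, and the place where your outline has a genuine gap, is the Hilbert--scheme comparison. You propose to prove the open--immersion statements by computing $h^0(\ms N_{Z/X})$ and $h^1(\ms N_{Z/X})$ via $0\to T_Z\to T_X|_Z\to\ms N_{Z/X}\to 0$. In the collinear case the joint image $Z$ is \emph{singular}: the resolution $\Res(\bPhi)\to Z$ contracts the strict transform of the line through the centers, so $T_Z$ is not locally free and the normal--sheaf sequence is not the clean tool you want; you flag this as ``delicate'' but do not say how to get past it. The paper's method is designed precisely to avoid this computation. It first proves an ``essential rigidity'' result: any flat deformation of the blowup $\Res(\bPhi)=\Bl_{\{o_i\}}\bP^3$ is again a blowup of $\bP^3$ at $n$ sections, so deformations of the resolution are automatically camera configurations. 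In the non--collinear case $\Res(\bPhi)\to Z$ is an isomorphism, so deformations of $Z$ lift for free. In the collinear case the paper computes $\Ext^2_R(L_{R/B},\ms O_R)$ for the contraction $R=\Res(\bPhi)\to B=Z$ and shows it vanishes exactly when $n>4$ (the key line is $H^0(\ell,\ms O(4-n))=0$), so deformations of $Z$ lift to deformations of $R$. This is where the threshold $n>4$ actually comes from; your heuristic about ``multifocal tensors vanishing'' is not a substitute.

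For the decalibration morphism your sketch is too soft. The paper obtains finiteness, unramifiedness, and the \'etale double cover for $n=2$ from an explicit analysis of the intersection of two conic cones in $\bP^3$: the fibre of $\nu_n$ over a configuration is the set of smooth conics in $\bigcap_i\widebar C_i$, and a case analysis (Proposition~\ref{P:cone-int}) shows this intersection never contains a doubled conic, so fibres are reduced of length $\le 2$ (giving unramifiedness), and generically contains two distinct conics (giving the fixed--point--free involution swapping them, hence the \'etale double cover). Generic injectivity for $n>2$ is a pencil argument: a third general cone through one of the two conics cannot contain the other. Your ``absence of nonzero infinitesimal recalibrations'' and appeal to the twisted--pair formula are gestures at this, but the conic--cone geometry is what actually carries the proof.
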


The statements on Hilbert schemes
generalize and refine the results of \cite{aholt2011hilbert}. In particular, our
methods show that the formation of the multiview variety gives an open immersion
into the Hilbert scheme at all points, identifying the moduli space with an open
subscheme of the Hilbert scheme.

\subsection{Methodological contributions}
\label{sec:meth-contr}

There are a few basic principles that set this work apart from other
work on multiview geometry.

\begin{enumerate}[leftmargin=1.5cm]
\item The \emph{functorial method\/}, common in modern algebraic
  geometry, gives us insight into the intrinsic geometry of 
  natural moduli problems growing out of the classical
  constructions. While \cite{aholt2011hilbert} uses the GIT quotient to 
construct the moduli of uncalibrated camera configurations, this method does not 
obviously generalize to a construction for calibrated cameras. Additionally, by 
developing the functorial theory of cameras we hope to make the field of 
multiview geometry accessible to a wider audience in pure mathematics.
  
\item The \emph{geometric view of calibration\/} via calibration data
  gives us insight into the structure of the space of calibrated
  cameras in a way that seems not to have been considered before. In
  particular, by restricting camera configurations to morphisms
  between calibrating conics, we get a fibration structure on the
  moduli space of calibrated camera configurations that is quite useful for
  studying the moduli space. In \cref{sec:mod-cal}, there's a
  third Hilbert scheme -- the Hilbert scheme of the product of
  calibrating conics -- that is the base of this fibration. This way
  of thinking about calibration can also be used to understand the
  essential variety in new ways. In \cite{essvar}, this is used to
  reproduce results of both \cite{demazure:inria-00075672} and
  \cite{kileel} (which itself used the results of
  \cite{demazure:inria-00075672}) from first principles, among other things.
  \item The use of \emph{diagram Hilbert schemes\/} allows us to treat the case 
of calibrated cameras similarly to how uncalibrated cameras are treated in 
\cite{aholt2011hilbert}. Instead of closed subschemes, as were used for the 
calibrated case, we use a type of flag to keep track of the calibration data. 
This transparently recovers the result that the moduli space is
  open in a Hilbert scheme.
\end{enumerate}

This paper also opens up many new lines of inquiry and leaves many questions 
unanswered. We discuss a few of these questions in \cref{sec:questions}.



\subsection*{Acknowledgments}
\label{sec:acknowledgments}

We had interesting and helpful conversations with many people during the course of this work: Sameer Agarwal, Roya Beheshti, Dustin Cartwright, Charles Godfrey, Richard
Hartley, Jonathan Hauenstein, Fredrik Kahl, Joe Kileel,
Irina Kogan, Luke Oeding, Peter Olver,
Brian Osserman, Tomas Pajdla, Jean Ponce, Jessica Sidman, Bernd
Sturmfels, Rekha Thomas, Matthew Trager, and Bianca Viray. Rekha Thomas
gave especially valuable remarks that helped us significantly improve
our exposition. We were
partially supported by NSF grants CAREER DMS-1056129 and DMS-1600813
during the preparation of this paper. We benefitted greatly from
the Berlin Algebraic Vision meeting in October of 2015, hosted at TU Berlin with
support from the Einstein Center for Mathematics, DFG Priority
Project SPP 1489, and the NSF, and the AIM
meeting on Algebraic Vision in May 2016, with support from AIM and the
NSF.

We thank the referees and editors for patiently giving us numerous helpful
suggestions and comments.

\section{The algebraic geometry of pinhole cameras}
\label{sec:cameras}

In this section we review the basic theory of pinhole cameras, with a geometric
emphasis. We include a canonical treatment of calibrated cameras with
a greater focus on the geometry of the calibrating conics. 
For the sake of clarity, we focus in 
\cref{sec:definitions} and \cref{sec:multiview} on the geometry over an
algebraically closed field. In \cref{sec:rel} we study what
happens over a general base scheme, as a preparation for the study of moduli
and deformation theory in \cref{sec:defos-moduli}.

\subsection{Basic definitions}
\label{sec:definitions}

\begin{defn}\label{def:camera}
  A \emph{pinhole camera\/} is a surjective rational map
$\varphi:\bP^3\dashrightarrow\bP^2$ given by three linearly
independent sections of $\ms O_{\bP^3}(1)$. The \emph{center\/} of the
camera is the unique point $p\in\bP^3$ at which $\varphi$ is undefined.
\end{defn}


\begin{defn}\label{def:calib-plane}
  A \emph{calibrated plane\/} is a pair $(\bP^2,D)$ with $D$ a smooth conic.
\end{defn}

\begin{defn}\label{defn:calibration-datum}
  A \emph{calibration datum\/} for a pinhole camera $\phi$ is a pair
  of planar degree $2$ curves $C\subset\bP^3$ and $D\subset\bP^2$ such that
  $D$ is a smooth conic and the restriction $\phi_C:C\dashrightarrow\bP^2$ 
factors through the inclusion $D\subset\bP^2$.

  If $C$ is smooth, the calibration datum will be called \emph{smooth\/} or
  \emph{non-degerate\/}; otherwise it will be called \emph{degenerate\/}. If a
  calibrated plane $(\bP^2,D)$ is fixed, a \emph{relative calibration datum\/}
  for a pinhole camera $\bPhi$ is a curve $C\subset\bP^3$ such that $(C,D)$ is a
  calibration datum for $\bPhi$.
\end{defn}

\begin{remark}
  If $C$ is smooth then it follows from the linearity of the camera
  projection that $\bPhi$ must map $C$ isomorphically to $D$, and that 
the center of $\bPhi$ is not contained in the plane spanned by $C$. If
$C$ is degenerate, it must be a divisor-theoretic sum of two lines on
the quadric cone in  $\bP^3$ generated by $D$ under the projection $\bPhi$ 
(i.e., a union of two distinct rulings or a double ruling). When the two cone points are distinct (i.e., the configuration is general), a union of two distinct rulings cannot occur as a limit of calibration data.
\end{remark}

\begin{remark}
A given camera with calibrated image plane $(\bP^2,D)$ has infinitely many
relative calibration data: one can take any plane section of the
quadric cone in $\bP^3$ lying over $D$. Once we look at configurations
of two or more cameras, there will be at most two calibration data
(smooth or degenerate). This is described at length in 
\cref{sec:geom-map}.

Degenerate calibrations give us closures of natural moduli
spaces, including the closure of the classical twisted pair moduli
space $\SO(3)\times\bP^2$ to a finite \'etale cover of the essential
variety described in \cref{sec:twisted-pairs-intro}. Imagining
the system of plane sections of the cone over $D$, one readily sees
that degenerate calibration data arise as limits of smooth calibration data.
\end{remark}

\begin{defn}\label{defn:calibrated-camera} A \emph{calibrated
camera\/} is a pair $(\varphi, (C, D))$ where $\varphi$ is a pinhole
camera and $(C,D)$ is a calibration datum for $\varphi$.
\end{defn}


\begin{remark} In the classical literature, a camera is called
\emph{calibrated\/} (or sometimes \emph{normalized\/}) when it takes the 
absolute conic to the Euclidean conic: more precisely, we can endow $\bP^3$ with 
coordinates $x,y,z,w$ and $\bP^2$ with coordinates $X,Y,Z$, and then we take the 
curves $C$ and $D$ to be given by the equations $\{w=0, x^2+y^2+z^2=0\}$ and
$\{X^2+Y^2+Z^2=0\}$, respectively. Note that any camera as
described here with a smooth calibration datum can be transformed to a
classically calibrated camera 
by applying suitable automorphisms to $\bP^3$ and $\bP^2$. (This is
not unique.) The degenerate calibrations cannot.

There are two reasons to use this more flexible approach: 
\begin{enumerate}[leftmargin=1.5cm]
\item[(1)] It leads to the ``right definition'' of the moduli space of 
calibrated camera configurations (\cref{sec:mod-cal}).
\item[(2)] By always
forcing the absolute conic to map to the Euclidean conic, one makes it
impossible to study modular boundary points where the absolute conic is
flattened until it collapses (yielding degenerate calibrations). As we
will describe below, these degenerate calibrations give geometrically
meaningful compactifications of the space of calibrated camera
configurations. 
\end{enumerate}
\end{remark}

\subsection{Multiview configurations}
\label{sec:multiview}

In this section, we describe some of the geometry attached to a
collection of cameras with distinct centers.

\subsubsection{Uncalibrated cameras}
\label{sec:uncal-config}

\begin{defn} A \emph{multiview configuration\/} is a collection of
cameras $$\phi_1,\ldots,\phi_n:\bP^3\dashrightarrow\bP^2.$$
\end{defn}

\begin{notation} We will generally use
$\bPhi:\bP^3\dashrightarrow(\bP^2)^n$ to denote a multiview
configuration, writing $\bPhi_i=\pr_i\circ\bPhi$ for its components
when necessary.  The \emph{length\/} of $\bPhi$ is the number of
cameras; we will denote it $\length(\bPhi)$.  Write
$\Center(\bPhi)\subset\bP^3$ for the tuple of camera centers. Write
$\pi:\Res(\bPhi)\to\bP^3$ for the blowup of $\bP^3$ at the reduced
closed subscheme supported at the camera centers; if two cameras have
the same center we only count it once. Given an index $i$, let $E_i$
denote the exceptional divisor over the $i$th camera center, with
canonical inclusion $\iota_i:E_i\hookrightarrow\Res(\bPhi)$. By the
previous convention, this means that there can be $i\neq j$ for which
$E_i=E_j$.
\end{notation}

\begin{defn}\label{defn:general} A multiview configuration $\bPhi$ is
\emph{general\/} if the camera centers are all distinct. It is
\emph{non-collinear\/} if the camera centers do not all lie on a
single line, and \emph{collinear\/} otherwise.
\end{defn}

\begin{defn}\label{defn:config-isom} An \emph{isomorphism\/} between
multiview configurations $\bPhi^1$ and $\bPhi^2$ of common length $n$
is an automorphism $\varepsilon:\bP^3\to\bP^3$ fitting into a
commutative diagram
        $$
        \begin{tikzcd} \bP^3\ar[dr, dashed, "\bPhi^1"]\ar[dd,
"\varepsilon"'] & \\ & (\bP^2)^n \\ \bP^3\ar[ur, dashed, "\bPhi^2"']
        \end{tikzcd}
        $$
\end{defn}


\begin{lem}\label{L:extendy} Let $Y$ be a scheme, and let $(\sL,s_0,\dots,s_n)$
be an invertible sheaf with $n$ sections. If $Z$ is the zero scheme of
$s_0,\ldots,s_n$ 
then the rational map induced by this linear series extends uniquely
to a morphism $\Bl_Z Y \to \bP^n$.
\end{lem}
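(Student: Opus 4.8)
The plan is to realize the base scheme $Z$ as the vanishing of the ideal sheaf generated by the sections, to observe that this ideal becomes invertible after pulling back to the blowup, and then to feed the resulting base-point-free linear series into the functor of points of $\bP^n$.

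First I would make the base scheme explicit. Dualizing the sections gives a map $\bigoplus_{i=0}^n \sL^\vee \to \ms O_Y$, $\xi \mapsto \sum_i s_i^\vee(\xi)$, whose image is an ideal sheaf $\ms I \subseteq \ms O_Y$; by definition $Z = V(\ms I)$, and on the open set $U = Y \setminus Z$ the sections generate $\sL$, so they define the rational map $\phi\colon Y \dashrightarrow \bP^n$ there. Writing $\pi\colon \Bl_Z Y \to Y$ for the blowup, the universal property of blowing up says precisely that $\pi^{-1}\ms I\cdot\ms O_{\Bl_Z Y}$ is an invertible ideal sheaf, which I will write as $\ms O(-E)$ for the exceptional Cartier divisor $E$. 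This single fact is the real engine of the argument.

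Next I would construct the morphism. The pulled-back sections $\pi^\ast s_0,\dots,\pi^\ast s_n$ generate the subsheaf $\ms O(-E)\otimes\pi^\ast\sL$ of $\pi^\ast\sL$, which is invertible; hence, regarded as sections of the invertible sheaf $\ms M := \ms O(-E)\otimes\pi^\ast\sL$, they have no common zeros and generate $\ms M$ everywhere. The functorial characterization of projective space (a line bundle equipped with $n+1$ globally generating sections, up to isomorphism, is the same datum as a map to $\bP^n$ pulling back $\ms O(1)$) then produces a morphism $g\colon\Bl_Z Y\to\bP^n$ with $g^\ast\ms O(1)\cong\ms M$ carrying the tautological sections to the $\pi^\ast s_i$. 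Over $\pi^{-1}(U)\cong U$ the divisor $E$ is empty, so $\ms M$ restricts to $\sL|_U$ with its original sections, and $g|_{\pi^{-1}(U)}$ agrees with $\phi$; thus $g$ extends the rational map.

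For uniqueness I would argue that any two morphisms $\Bl_Z Y\to\bP^n$ restricting to $\phi$ on $\pi^{-1}(U)$ coincide, using that $\bP^n$ is separated together with schematic density of $\pi^{-1}(U)$ in $\Bl_Z Y$. The one point genuinely requiring care is this density: it amounts to checking that the exceptional divisor $E$ contains no associated points of $\Bl_Z Y$, which I expect to be the main obstacle in the fully general statement. When $Y$ is reduced and $Z$ is nowhere dense --- in particular in every case arising from cameras, where $Y$ is a variety and $Z$ is a finite set of points --- the irreducible components of $\Bl_Z Y$ are the strict transforms of those of $Y$, their generic points lie in $\pi^{-1}(U)$, and so $\pi^{-1}(U)$ is dense; uniqueness then follows, and in general one reduces to this case by discarding the components of $Y$ contained in $Z$.
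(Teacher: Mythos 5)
Your argument is correct and produces the same morphism as the paper, but by a different route. The paper's proof stays on $Y$: the surjection $\ms O_Y^{\oplus n+1}\twoheadrightarrow\sL\tensor\ms I_Z$ determined by the sections is extended to a surjection of graded $\ms O_Y$-algebras $\Sym^{\ast}((\sL^\vee)^{\oplus n+1})\twoheadrightarrow\bigoplus_m \ms I_Z^m$ onto the Rees algebra, and applying relative $\bProj$ yields a closed immersion of $\Bl_Z Y$ into $\bP((\sL^\vee)^{\oplus n+1})\cong\bP^n_Y$, which one composes with the projection to $\bP^n$. You instead work on the blowup itself: pull the sections back, observe that they generate the invertible sheaf $\ms M=\ms O(-E)\tensor\pi^\ast\sL$ because the inverse image ideal sheaf is invertible, and invoke the functor of points of $\bP^n$. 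The two proofs exploit the very same surjection, just on opposite sides of the $\bProj$ construction; yours is more elementary and self-contained, while the paper's has the side benefit of exhibiting $\Bl_Z Y$ as a closed subscheme of $\bP^n_Y$. Yours also addresses uniqueness explicitly, which the paper's three-line proof passes over even though it is part of the statement.

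One correction to your uniqueness step: no hypotheses on $Y$ are needed, and the fallback reduction you propose is the one shaky part of the write-up. The open subscheme $\pi^{-1}(U)=\Bl_Z Y\setminus E$ is schematically dense for an \emph{arbitrary} scheme $Y$, precisely because $E$ is an effective Cartier divisor: locally $E=V(f)$ with $f$ a nonzerodivisor, so a local section of $\ms O_{\Bl_Z Y}$ that vanishes off $E$ is annihilated by a power of $f$ and hence is zero. (Equivalently, in the language you used: no associated point of $\Bl_Z Y$ can lie on $E$, since the local equation of $E$ would then be a zerodivisor in the local ring at that point.) Thus separatedness of $\bP^n$ gives uniqueness in complete generality, with no reducedness or nowhere-density assumptions. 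By contrast, discarding the irreducible components of $Y$ contained in $Z$ would not handle embedded associated points, so as stated that reduction does not settle the general case---but, as above, no reduction is needed.
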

        
\begin{proof} By definition the sections $s_0,\ldots,s_n$ define a surjection
          $$\ms O_Y^{n+1}\twoheadrightarrow\ms L\tensor\ms I_Z,$$
which extends to a surjective map of $\ms O_Y$-algebras
$$\Sym^{\ast}(\ms L^\vee)^{\oplus n+1}\twoheadrightarrow\bigoplus\ms I^n.$$
The induced map on relative $\bProj$ constructions gives the desired morphism.
\end{proof}

\begin{prop}\label{prop:resolution-of-config} Given a multiview
configuration $\bPhi$, there is a unique commutative diagram
        $$
        \begin{tikzcd} \Res(\bPhi)\ar[dr, "\rho"] & \\ &
(\bP^2)^{\length(\bPhi)}\\ \bP^3.\ar[uu, dashed, "\pi^{-1}"]\ar[ur,
dashed, "\bPhi"']
        \end{tikzcd}
        $$
        The diagram has the property that for each $i$, the
composition
        $$
        \begin{tikzcd} E_i\ar[r, "\iota_i"] & \Res(\bPhi)\ar[r,
"\rho"] & (\bP^2)^{\length(\Phi)}\ar[r,"\pr_i"] & \bP^2
        \end{tikzcd}
        $$
        is an isomorphism.
\end{prop}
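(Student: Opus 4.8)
The plan is to build $\rho$ one factor at a time and then read off its behavior on each exceptional divisor. Since a morphism to a product $(\bP^2)^n$ is the same datum as an $n$-tuple of morphisms to $\bP^2$, it suffices to extend each component camera $\bPhi_i = \pr_i\circ\bPhi$ to a morphism $\rho_i:\Res(\bPhi)\to\bP^2$ and to set $\rho=(\rho_1,\ldots,\rho_n)$; commutativity and uniqueness can then be checked factor by factor.

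First I would extend each individual camera. By Definition \ref{def:camera}, $\bPhi_i$ is given by three linearly independent sections of $\ms O_{\bP^3}(1)$, whose common zero scheme is the reduced point $o_i=\Center(\bPhi_i)$ (three independent linear forms cut out a reduced linear subspace of dimension $0$). Applying Lemma \ref{L:extendy} with $Y=\bP^3$, the sheaf $\ms O_{\bP^3}(1)$, and these three sections produces a morphism $\Bl_{o_i}\bP^3\to\bP^2$ resolving $\bPhi_i$. It then remains to factor the structure map $\pi:\Res(\bPhi)\to\bP^3$ through $\Bl_{o_i}\bP^3$, which follows from the universal property of blowing up: the inverse image ideal of $\ms I_{o_i}$ on $\Res(\bPhi)$ is invertible, because $\ms I_{o_i}$ agrees with the ideal $\ms I_{\Center(\bPhi)}$ of the full reduced center in a neighborhood of $o_i$ and equals the unit ideal away from $o_i$, while the inverse image of $\ms I_{\Center(\bPhi)}$ is invertible by definition of the blowup. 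Composing gives the desired $\rho_i$. If several cameras share a center, this argument is simply run once for each index $i$ using the same exceptional divisor.

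For uniqueness, note that $\rho$ is required to agree with $\bPhi$ on the locus where $\pi^{-1}$ is defined, which is the complement of the exceptional divisors and hence dense in the reduced (indeed smooth) scheme $\Res(\bPhi)$. Any two such extensions to the separated scheme $(\bP^2)^n$ therefore coincide.

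The heart of the statement is the claim that $\pr_i\circ\rho\circ\iota_i$ is an isomorphism, and this is exactly where the linearity of the cameras is used. Since $o_i$ is a smooth point of $\bP^3$, the exceptional divisor $E_i$ is canonically the projectivized tangent space $\bP(T_{o_i}\bP^3)\cong\bP^2$, and the restriction of $\rho_i$ to $E_i$ is the projectivization of the three functionals $d\ell_0,d\ell_1,d\ell_2$ obtained by differentiating the defining forms $\ell_0,\ell_1,\ell_2$ of $\bPhi_i$ at $o_i$: each $\ell_j$ vanishes at $o_i$, so on $E_i$ its leading term is $d\ell_j$, giving $v\mapsto[d\ell_0(v):d\ell_1(v):d\ell_2(v)]$. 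Because the $\ell_j$ are linearly independent and all vanish at $o_i$, these differentials form a basis of the cotangent space, so the induced map $\bP(T_{o_i}\bP^3)\to\bP^2$ is a linear isomorphism; concretely, after normalizing $o_i=[0:0:0:1]$ and $\ell_j=x_j$, the restriction $\rho_i|_{E_i}$ is literally the identity on $\bP^2$. I expect the main technical point to be the factorization of $\pi$ through each $\Bl_{o_i}\bP^3$ (in particular the bookkeeping when centers coincide), whereas the $E_i$-isomorphism, though it is the geometrically essential output, reduces to this single-chart computation once the linear-independence hypothesis built into Definition \ref{def:camera} is invoked.
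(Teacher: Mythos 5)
Your proposal is correct and follows essentially the same route as the paper: existence and uniqueness come from Lemma \ref{L:extendy} (you make explicit the component-wise application and the factorization of $\pi$ through each $\Bl_{o_i}\bP^3$ via the universal property of blowing up, which the paper leaves implicit), and the isomorphism on $E_i$ is verified by the same local computation that the paper invokes for $\Bl_0\bA^3\to\bP^2$, which you phrase via projectivized differentials and then confirm in the normalized chart $o_i=[0:0:0:1]$, $\ell_j=x_j$.
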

\begin{proof} \cref{L:extendy} shows the existence and
uniqueness of the desired diagram. To check that the composition is an
isomorphism on exceptional divisors one can see that each map is
locally isomorphic to the morphism $\Bl_{0}\bA^3\to\bP^2$ that
resolves the canonical presentation $\bA^3\setminus\{0\}\to\bP^2$, and
here one can simply check that the induced map from the exceptional
divisor to the plane is an isomorphism. We omit the details.
\end{proof}

\subsubsection{Calibrated cameras}
\label{sec:cal-config}

When the cameras are adorned with calibration data, we track these
data through the diagrams.

\begin{defn} Given a multiview configuration
$\bPhi:\bP^3\dashrightarrow(\bP^2)^n$, a \emph{multiview
calibration datum\/} is a pair $(C, (C_1,\ldots,C_n))$ such that for
each $i=1,\ldots,n$ the pair $(C,C_i)$ is a calibration datum for
$\bPhi_i$. Given a tuple of calibrated planes $(\bP^2,C_i)$ for
$i=1,\ldots,n$, a \emph{relative calibration datum\/} for
$\bPhi$ is a curve $C\subset\bP^3$ such that
$(C,(C_1,\ldots,C_n))$ is a calibration datum for $\bPhi$.
\end{defn}

\begin{notation} We will write $\bC$ for a calibration datum $(C,
(C_i))$, and then $\bC_0=C$ and $\bC_i=C_i$ for $i=1,\ldots,n$.
\end{notation}

\begin{notation}
  A calibrated multiview configuration $(\bPhi,\bC)$ will be called
  \emph{non-degenerate\/} if the calibration datum is non-degenerate.
\end{notation}

\begin{defn} An \emph{isomorphism\/} between multiview configurations
with calibration data $(\bPhi^1, \bC^1)$ and $(\bPhi^2,\bC^2)$ of
common length $n$ is an isomorphism $\eps:\bPhi^1\to\bPhi^2$ of
multiview configurations as in \cref{defn:config-isom} such
that $\eps(\bC^1_0)=\bC^2_0$ and such that for $i=1,\ldots,n$ we have
$\bC^1_i=\bC^2_i$.
\end{defn}


\subsubsection{A characterization of isomorphic general
configurations}
\label{sec:characterization-of-isom-configs}

In this section we briefly consider when two multiview configurations
$\bPhi^1$ and $\bPhi^2$ are isomorphic (and similarly when they are
endowed with calibration data). This will play a role in studying a
particular map from the moduli space to Hilbert schemes in later
sections of this paper.

\begin{defn} Given a multiview configuration $\bPhi$, the
\emph{associated multiview scheme\/}, also known as the \emph{joint
image\/} \cite{aholt2011hilbert,trager2015joint}, is the
scheme-theoretic image of the resolution 
$\Res(\bPhi)$ under the canonical extension $\rho$ of 
\cref{prop:resolution-of-config}. It is denoted
$\Scheme(\bPhi)$. Working over a field (as we temporarily are here), the multiview
scheme is a variety, and is called the ``multiview variety'' in
\cite{aholt2011hilbert}. 
\end{defn}

In the following, an \emph{$n$-term flag of schemes\/} will be a sequence of 
closed immersions 
$$X_0\hookrightarrow X_1\hookrightarrow X_2\hookrightarrow\cdots\hookrightarrow 
X_{n-1}.$$

\begin{defn} Given a calibrated multiview configuration $(\bPhi,C)$
  with calibrated image planes
  $(\bP^2,C_i)$, $i=1,\ldots,n$,
 the \emph{associated multiview flag\/}, denoted $\Flag(\bPhi,C)$, is the 
$2$-term flag of schemes
 $C\subset\Scheme(\bPhi)$ contained in $C_1\times\cdots\times
 C_n\subset (\bP^2)^n$.
\end{defn}
       
        As we will gradually see, the following lemma is the key
result connecting the abstract moduli problems we study here to
Hilbert schemes.
        
\begin{lem}\label{lem:surj-structure-morph} 
  The canonical map $\sO_{\Scheme(\bPhi)} \to \R\rho_\ast
\sO_{\Res(\bPhi)}$ is a quasi-isomorphism. Equivalently, the canonical map $\rho^\sharp:\ms
O_{(\bP^2)^n}\to\rho_\ast\ms O_{\Res(\bPhi)}$ is an isomorphism and 
all higher direct images $\R^i\rho_\ast\ms O_{\Res(\bPhi)}$ (with
$i>0$) vanish.
\end{lem}
\begin{proof}
For the first statement, note that $\rho_\ast\ms
O_{\Res(\bPhi)}$ is a finite $\ms O_{(\bP^2)^n}$-algebra by
properness. Moreover, since every non-empty fiber of $\rho$ is
geometrically integral (it being an intersection of lines, hence
either a point or a line), we see that $\rho^{\sharp}$ is surjective
after base change to any point of $(\bP^2)^n$. By Nakayama's lemma,
$\rho^\sharp$ is surjective.

Now we show that the higher direct images vanish.  By the Theorem on
Formal Functions \cite[Théorème 4.1.5]{MR0217085}, the completion of
$\R^i\rho_\ast\ms O$ at a point $p$ is isomorphic to $\lim\H^i(X_m,\ms
O_{X_m})$, where $X_m$ is the $m$th infinitesimal neighborhood of the
fiber of $\rho$ over $p$. When the fiber is empty or a point, this
vanishes. The only interesting case is the unique singular point that
is the image of the strict transform of the line through all camera
centers, in the collinear case. Note that $\sO_{X_m}$ is filtered by
subquotients that are symmetric powers of the ideal sheaf $\ms
I_{X_0}$ restricted to $X_0$. Given a line $L$ in $\bP^3$, we have
that $\ms I_L|_L\cong\ms O_L(-1)^{\oplus 2}$. For each point on $L$
that we blow up, the ideal sheaf gets twisted by $1$ (functions from
$\bP^3$ vanish to extra order on the strict transform along the
intersection with the exceptional divisor). In fact, if we are blowing
up $n$ points, we have that $\ms I_{X_0}|_{X_0}\cong\ms
O_{X_0}(n-1)^{\oplus 2}$. The $\ell$th symmetric power will be a sum
of copies of $\ms O_{X_0}(\ell(n-1))$. All such sheaves
have vanishing $\H^i$ for all $i>0$. 

 Write $\ms I_m$ for the ideal sheaf of $X_m$ in $\Res(\bPhi)$.
Consider the standard exact sequences
$$0\to \ms I_{m-1}/
\ms I_m\to\ms O_{X_m}\to\ms O_{X_{m-1}}\to 0.$$
The above calculations show inductively that $\H^i(X_n,\ms O_{X_n})=0$
for all $n\geq 0$ and all $i>0$. This concludes the proof.
\end{proof}

\begin{cor}\label{prop:res-closed} If $\Phi$ is a
non-collinear multiview configuration then the map $\rho: \Res(\Phi)
\to (\bP^2)^n$ is a closed immersion.
\end{cor}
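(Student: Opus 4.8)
The plan is to factor $\rho$ through its scheme-theoretic image $\Scheme(\bPhi)$ and prove that the resulting map onto that image is an isomorphism, feeding Lemma \ref{lem:surj-structure-morph} into a Zariski's-main-theorem argument. The non-collinearity hypothesis will enter at exactly one point, namely to guarantee that $\rho$ has finite fibers.

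First I would record the formal inputs. Since $\Res(\bPhi)$ is projective over the ground field and $(\bP^2)^n$ is separated, $\rho$ is proper, so its scheme-theoretic image $\Scheme(\bPhi)$ is a closed subscheme and $\rho$ is surjective onto it. Write $\rho=j\circ\rho'$, where $j\colon\Scheme(\bPhi)\hookrightarrow(\bP^2)^n$ is the closed immersion and $\rho'\colon\Res(\bPhi)\to\Scheme(\bPhi)$ is proper and surjective. The degree-zero part of the quasi-isomorphism in Lemma \ref{lem:surj-structure-morph} reads $j_\ast\ms O_{\Scheme(\bPhi)}\xrightarrow{\ \sim\ }\rho_\ast\ms O_{\Res(\bPhi)}=j_\ast\rho'_\ast\ms O_{\Res(\bPhi)}$; since $j_\ast$ is fully faithful on quasicoherent sheaves, the unit map $\ms O_{\Scheme(\bPhi)}\to\rho'_\ast\ms O_{\Res(\bPhi)}$ is an isomorphism. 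In other words $\rho'$ satisfies the Stein condition.

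Next I would establish that $\rho$ (equivalently $\rho'$) is quasi-finite, which is where non-collinearity is used. As analyzed in the proof of Lemma \ref{lem:surj-structure-morph}, every nonempty fiber of $\rho$ over a world point is the intersection $\bigcap_i\bPhi_i^{-1}(q_i)$ of the back-projected lines through the camera centers $o_i$, and such an intersection is either a single point or a line; a line occurs only when all $n$ back-projected lines coincide, which forces every $o_i$ to lie on that common line. Over the exceptional divisors, $\rho$ is injective because each composite $E_i\to\bP^2$ is an isomorphism by Proposition \ref{prop:resolution-of-config}. Hence, when $\bPhi$ is non-collinear, every fiber of $\rho$ is a single point, so $\rho$, being of finite type with finite fibers, is quasi-finite.

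Finally I would conclude formally. A proper quasi-finite morphism is finite (Zariski's main theorem / Stein factorization), so $\rho'$ is finite; a finite morphism is affine, and combined with the Stein isomorphism $\rho'_\ast\ms O_{\Res(\bPhi)}\cong\ms O_{\Scheme(\bPhi)}$ from the previous step this forces $\rho'$ to be an isomorphism. Therefore $\rho=j\circ\rho'$ is a closed immersion. The main obstacle is the fiber computation giving quasi-finiteness, since that is the one step invoking the geometry and the non-collinearity hypothesis; everything else is formal once Lemma \ref{lem:surj-structure-morph} is available. Because the needed fiber analysis is essentially already carried out in the proof of that lemma, the real work here is organizing it cleanly and applying the non-collinearity hypothesis at precisely the right place.
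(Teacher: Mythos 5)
Your proposal is correct and follows essentially the same route as the paper: non-collinearity gives quasi-finiteness of $\rho$, properness then gives finiteness, and Lemma \ref{lem:surj-structure-morph} supplies the structure-sheaf surjectivity that makes a finite morphism a closed immersion. Your factorization through the scheme-theoretic image merely unpacks the last step that the paper invokes directly, so the two arguments are the same in substance.
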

        
\begin{proof} By the non-collinearity assumption, the
geometric fibers of $\rho$ all have length at most 1. Thus, $\rho$ is
proper and quasi-finite, hence finite. Applying 
\cref{lem:surj-structure-morph} then shows that $\rho$ is a closed
immersion.
\end{proof}

\begin{lem}\label{lem:stupid-extend} Suppose
$\phi_1,\phi_2:\bP^3\dashrightarrow\bP^2$ are cameras and
$\alpha:\bP^3\dashrightarrow\bP^3$ is a birational automorphism such
that $\phi_2=\phi_1\circ\alpha$. If $\alpha$ and $\phi_1\circ\alpha$
are both regular on an open subset $U\subset\bP^3$ whose complement
has codimension at least $2$ then $\alpha$ extends to a unique
regular automorphism $\bP^3\to\bP^3$.
\end{lem}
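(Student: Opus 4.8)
The whole statement reduces to a single numerical claim, so I would organize the proof around that. The codimension hypothesis enters through two standard facts about $\bP^3$: since $\bP^3$ is smooth and $\bP^3\setminus U$ has codimension at least $2$, restriction is an isomorphism $\operatorname{Pic}(\bP^3)\cong\operatorname{Pic}(U)$, so $\operatorname{Pic}(U)=\Z\cdot\ms O_U(1)$, and every section of $\ms O_U(k)$ extends uniquely to a section of $\ms O_{\bP^3}(k)$. Thus the morphism $\alpha|_U\colon U\to\bP^3$ satisfies $(\alpha|_U)^\ast\ms O_{\bP^3}(1)\cong\ms O_U(d)$ for a unique $d\geq 1$, and the entire lemma reduces to showing $d=1$. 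Indeed, granting $d=1$, the morphism $\alpha|_U$ is given by four sections of $\ms O_U(1)$, i.e. by four linear forms $L_0,\dots,L_3$ on $\bP^3$ (using the extension property); since $\alpha$ is dominant these are linearly independent, hence form a basis of $\H^0(\bP^3,\ms O(1))$ and have no common zero. The four forms then define a morphism $\bP^3\to\bP^3$ in $\operatorname{PGL}_4=\operatorname{Aut}(\bP^3)$ restricting to $\alpha$ on $U$, and it is the unique such extension because two morphisms to a separated scheme agreeing on the dense open $U$ coincide.

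The content is therefore entirely in the claim $d=1$, and the right way to read it is: $\alpha$ is an isomorphism in codimension one, i.e. it contracts no prime divisor. For a birational map that is an isomorphism in codimension one the Picard pullback is functorial, so $\alpha^\ast$ and $(\alpha^{-1})^\ast$ are mutually inverse automorphisms of $\operatorname{Pic}(\bP^3)=\Z$; preserving the ample generator, each must be the identity, whence $d=1$. (Equivalently: a pseudo-automorphism of $\bP^3$ is linear.) So it suffices to prove that $\alpha$ contracts no divisor. Note that any contracted prime divisor would necessarily meet $U$, since $\bP^3\setminus U$ has codimension at least $2$ and therefore contains no divisor; hence the contraction would occur on the locus where $\alpha$ is actually regular.

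To rule out contracted divisors I would analyze the equations of $\alpha$ directly, as the paper suggests, leaning on the factorization $\phi_2=\phi_1\circ\alpha$ and the \emph{linearity} of the cameras rather than on $\alpha$ alone. Writing $\phi_1$ as the linear projection from its center $c_1$ and $\phi_2$ as the projection from $c_2$, the relation $\phi_2=\phi_1\circ\alpha$ says that $\alpha$ intertwines the two rulings of $\bP^3$ by lines: it carries the fiber $\phi_2^{-1}(y)$ into $\phi_1^{-1}(y)$ for every $y\in\bP^2$. I would combine this compatibility with the regularity of $\phi_1\circ\alpha$ on all of $U$ to constrain the forms defining $\alpha$, the point being that a divisor contracted into the fiber structure over $c_1$ interacts with the strictly linear behavior of $\phi_2$, and tracking the resulting divisibility of the pulled-back linear forms of $\phi_1$ should force the degree down to $d=1$.

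The main obstacle is precisely this last step, the vanishing of contracted divisors, and it is genuinely the crux rather than a formality. The indeterminacy locus of $\alpha$ has codimension at least $2$ by hypothesis, but that by itself does not prevent $\alpha$ from contracting a divisor lying inside $U$; and the tempting shortcut of restricting to a general fiber of $\phi_2$ and invoking ``a birational map of $\bP^1$ is an isomorphism'' fails, because every such fiber passes through the center $c_2$, which lies in the base scheme of $\alpha$, so the fiber degree need not equal $d$. Overcoming this is exactly where the full strength of the hypotheses must be spent: one must use that the cameras are \emph{linear projections}, not merely birational or dominant maps, and exploit the compatibility of $\alpha$ with the projections to show that no divisor can be contracted.
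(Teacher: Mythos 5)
Your reductions are sound: the identification $\Pic(U)=\Z\cdot\ms O_U(1)$ together with Hartogs extension of sections, the reduction of the whole lemma to $d=1$, and the endgame (four extended linear forms, linearly independent by dominance, hence base-point free and defining an automorphism, unique by separatedness) are all correct, and in fact slightly more careful than the paper about base-point-freeness of the extended linear system. The gap is exactly where you flag it: the claim that $\alpha$ contracts no divisor (equivalently, that $d=1$) is never proved; ``tracking the resulting divisibility \dots should force the degree down to $d=1$'' is a plan, not an argument, and the entire content of the lemma is concentrated there.

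Moreover, under the reading of the hypotheses that your discussion suggests --- where $\phi_1\circ\alpha$ means the composite \emph{rational map\/} (which is just $\phi_2$) and regularity on $U$ is required of that --- the statement you are trying to prove is false, so no amount of equation-tracking can close the gap. Take $\phi_1=\phi_2=(x_0:x_1:x_2)$, with center $c=(0:0:0:1)$, and $\alpha=(x_0^2:x_0x_1:x_0x_2:x_1x_3)$. Then $\alpha$ is birational (in the chart $x_0=1$ it is $(a,b,t)\mapsto(a,b,at)$), one has $\phi_1\circ\alpha=\phi_2$ as rational maps, and both $\alpha$ and $\phi_2$ are regular on the complement $U$ of the two lines $\{x_0=x_1=0\}\cup\{x_0=x_3=0\}$, whose complement has codimension $2$; yet $\alpha$ is a genuine quadratic Cremona transformation, contracting the plane $\{x_0=0\}$ to $c$. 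The hypothesis must therefore be used in its strong form, as the paper does: regularity of the honest composite means $\alpha(U)$ avoids the center $c_1$ of $\phi_1$, so that $\phi_2|_U$ factors as a composite of \emph{morphisms\/} $U\xrightarrow{\alpha}\bP^3\setminus\{c_1\}\xrightarrow{\phi_1}\bP^2$. Then plain functoriality of pullback of line bundles along morphisms gives $\alpha|_U^\ast\ms O_{\bP^3}(1)=\alpha|_U^\ast\phi_1^\ast\ms O_{\bP^2}(1)=\phi_2|_U^\ast\ms O_{\bP^2}(1)=\ms O_U(1)$, i.e.\ $d=1$, with no divisor analysis at all --- this is essentially the paper's whole proof, and it shows the ``crux'' you identified is a non-issue once the hypothesis is parsed correctly. (If you prefer your contracted-divisor route, it also closes in one line under this reading: a prime divisor $D\subset U$ contracted by $\alpha$ to a point $p$ would have $p\neq c_1$, hence $\phi_2(D)=\phi_1(\alpha(D))=\{\phi_1(p)\}$, which is impossible since $\phi_2$ is a linear projection with one-dimensional fibers.)
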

\begin{proof} Removing the center of $\phi_1$ if necessary, we
may assume that there is an open subscheme $U\subset\bP^3$ on which $\phi_1$, 
$\phi_2$,
and $\alpha$ are all regular and $\codim(\bP^3, \bP^3\setminus U)\geq
2$. By assumption, $\phi_i^\ast\ms O(1)=\ms O_U(1)$. Thus,
$\alpha^\ast\ms O(1)=\ms O(1)$. Since $\Gamma(U,\ms
O(1))=\Gamma(\bP^3,\ms O(1))$, we conclude from the universal property
of projective space that the morphism $\alpha:U\to\bP^3$ extends to a
unique endomorphism $\widetilde{\alpha}$ of $\bP^3$. Since $\alpha$ is
birational, $\widetilde{\alpha}$ is an isomorphism, as desired.
\end{proof}

\begin{prop}\label{prop:config-isom-image} Two multiview
configurations $\bPhi^1$ and $\bPhi^2$ of length $n$ are isomorphic if
and only if their associated multiview schemes in $(\bP^2)^n$ are
equal. Two calibrated multiview configurations $(\bPhi^1, C_1)$ and 
$(\bPhi^2,C_2)$ are isomorphic if and only if their associated multiview flags 
$\Flag(\bPhi^1,C_1)$ and $\Flag(\bPhi^2,C_2)$ are equal.
\end{prop}
\begin{proof} Since $\bPhi^i$ is birational onto its image for $i=1,2$, we see
that if $\Scheme(\bPhi^1)=\Scheme(\bPhi^2)$ then there is a birational
automorphism $\alpha:\bP^3\dashrightarrow\bP^3$ such that
$\bPhi^2=\bPhi^1\circ\alpha$. Moreover, $\pr_1\circ\bPhi^1$, $\alpha$, and
$\pr_1\circ\bPhi^2\circ\alpha$ are all regular on the open subscheme of $\bP^3$
that is the complement of the line joining the centers of the two cameras 
$\pr_1\circ\bPhi^1$
and $\pr_1\circ\bPhi^2$ (as this maps isomorphically into the smooth locus of
$\Scheme(\bPhi^1)$). Applying \cref{lem:stupid-extend}, we see thats
$\alpha$ is regular, as desired. The calibrated case follows, once we note that 
the calibrating curves lie in the regular locus of all cameras.
\end{proof}
 
\subsection{Relativization}
\label{sec:rel}

In this section we describe how to generalize the results of
\cref{sec:definitions,sec:multiview} to families of cameras over an arbitrary
base space. This is a necessary step towards defining the moduli of camera
configurations. 

\begin{defn}\label{defn:relative-pinhole-cam} Given a scheme $S$, a
\emph{relative pinhole camera\/} over $S$ is a rational map 
$p:\bP\dashrightarrow\bP^2_S$ over $S$ uniquely determined by the
following information:
        \begin{enumerate}[leftmargin=1.5cm]
                \item the scheme $\bP$ is a Zariski $\bP^3_S$-bundle (i.e., has the form $\bP(V)$ for a locally free $\ms O_S$-module of rank $4$);
                \item there is a map $\sigma:\ms O_{\bP}^{\oplus
3}\to\ms O_{\bP}(1)$ whose cokernel is an invertible sheaf supported
exactly over a section $Z$ of $\bP \to S$, called the \emph{camera
center\/};
                \item a representative of $p$ is given by the morphism
$\bP \setminus Z \to \bP^2_S$ determined by the quotient $\sigma_{\bP
\setminus Z}$ and the universal property of projective space.
        \end{enumerate}
\end{defn}

Throughout this section, when the base scheme $S$ is clear, we will often simply
write $\bP^2$ for $\bP^2_S$, etc.

\begin{defn} Given a scheme $S$, a \emph{relative multiview configuration of
length $n$\/} over $S$ is given by a proper $S$-scheme $\bP\to S$ of finite
presentation and a rational map $\bPhi:\bP \dashrightarrow(\bP^2_S)^n$ over $S$
such that for each $i$ the composition $\pr_i\circ\bPhi$ is a relative pinhole
camera as in \cref{defn:relative-pinhole-cam}.
        
Two relative multiview configurations
\[ \bPhi^i:\bP_i \dashrightarrow \bP^2_S,\quad i=1,2\] 
are \emph{isomorphic\/} if there is an $S$-isomorphism $\eps:
\bP_1 \xrightarrow{\sim} \bP_2$ such that $\bPhi^2=\bPhi^1 \circ
\eps$.
\end{defn}

In what follows, we will write $\bP^2$ for $\bP^2_S$, etc., when the base scheme
$S$ is understood.

\begin{notn} Given a multiview configuration $\bPhi:\bP\to(\bP^2)^n$
of length $n$, we will write
        \begin{enumerate}[leftmargin=1.5cm]
                \item $S(\Phi)$ for the domain $\bP$ of $\bPhi$;
                \item $Z_1(\bPhi),\dotsc,Z_n(\bPhi)\subset\bPhi$ for
the camera centers;
                \item $Z(\bPhi)$ for the scheme-theoretic union
$Z_1(\bPhi)\cup\cdots\cup Z_n(\bPhi)$;
                \item $\Res(\bPhi)$ for the blowup of $S(\bPhi)$ in
$Z$.
        \end{enumerate}
\end{notn}

\begin{defn}\label{defn:general-rel} A relative multiview
configuration $\bPhi$ over $S$ is \emph{general\/} if the camera
centers $Z_1,\ldots,Z_{\length(\bPhi)}$ are pairwise disjoint closed
subschemes of $\bP$.
\end{defn}

\begin{defn} A relative multiview configuration
$\bPhi:\bP\dashrightarrow(\bP^2_S)^n$ over $S$ is \emph{collinear} if
there is a closed subscheme $\bL\subset S(\bPhi)$ that is a relative
line over $S$ and that contains $Z(\bPhi)$. It is
\emph{nowhere-collinear\/} if it is not collinear upon any basechange
$S'\to S$.
\end{defn}


\begin{defn} Given a relative multiview configuration $\bPhi$ of
length $n$ over $S$, a \emph{calibration datum\/} for $\bPhi$
is a pair $(C, (C_1,\ldots,C_n))$ where 
\begin{enumerate}[leftmargin=1.5cm]
  \item $C \subset \bP$ is a relative degree two curve over $S$;
  \item $C_i \subset \bP^2_S$ is a relative
smooth conic over $S$ for $i=1, \ldots, n$;
  \item for $i=1,\ldots, n$, the induced morphism $(\pr_i \circ \bPhi)_C$ factors through $C_i$.
\end{enumerate}
If $C$ is smooth, the calibration datum will be called \emph{smooth\/} or
\emph{non-degenerate\/}; otherwise it will be called \emph{degenerate\/}.
\end{defn}

\begin{prop}\label{prop:resolution-of-config-rel} Given a general relative 
multiview
configuration $\bPhi$ over $S$, there is a unique commutative diagram
        $$
        \begin{tikzcd} \Res(\bPhi)\ar[dr, "\rho"] & \\ &
(\bP^2)^{\length(\bPhi)}\\ \bP.\ar[uu, dashed, "\pi^{-1}"]\ar[ur,
dashed, "\bPhi"']
        \end{tikzcd}
        $$
        The diagram has the property that for each $i$, the
composition
        $$
        \begin{tikzcd} E_i\ar[r, "\iota_i"] & \Res(\bPhi)\ar[r,
"\rho"] & (\bP^2)^{\length(\Phi)}\ar[r,"\pr_i"] & \bP^2
        \end{tikzcd}
        $$
        is an isomorphism. Moreover, this diagram is compatible with
        arbitrary base change on $S$.
\end{prop}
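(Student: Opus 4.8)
The plan is to reduce the entire statement to the absolute Proposition~\ref{prop:resolution-of-config} together with a single base-change input, exploiting that in the general case the center $Z(\bPhi)=Z_1\cup\cdots\cup Z_n$ is flat over $S$.

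First I would construct $\rho$ and check uniqueness. Since $(\bP^2_S)^n$ is a fiber product, giving a morphism into it is the same as giving its $n$ components, so it suffices to resolve each $\pr_i\circ\bPhi$ separately. By Definition~\ref{defn:relative-pinhole-cam} the $i$-th component is defined by three sections of $\ms O_{\bP}(1)$ whose common zero scheme is the center $Z_i$, so Lemma~\ref{L:extendy}, which is already stated over an arbitrary base scheme, extends it uniquely to a morphism $\Bl_{Z_i}\bP\to\bP^2_S$. By Definition~\ref{defn:general-rel} the centers $Z_i$ are pairwise disjoint, so near the exceptional divisor $E_i$ one has $\ms I_{Z_i}\cdot\ms O_{\Res(\bPhi)}=\ms I_{Z}\cdot\ms O_{\Res(\bPhi)}$, which is invertible, while away from $E_i$ the ideal $\ms I_{Z_i}$ is already the unit ideal; hence $\ms I_{Z_i}\cdot\ms O_{\Res(\bPhi)}$ is invertible and the universal property of the blowup produces a unique $\bP$-morphism $\Res(\bPhi)\to\Bl_{Z_i}\bP$. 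Composing and assembling over $i$ yields $\rho$, and uniqueness is inherited from the uniqueness in Lemma~\ref{L:extendy} and in the universal property. This is verbatim the argument of Proposition~\ref{prop:resolution-of-config}.

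The genuinely new content, and the step I expect to be the main obstacle, is compatibility with \emph{arbitrary} base change $S'\to S$, which need not be flat, so one cannot simply invoke flat base change for blowups. Here I would use that each $Z_i$ is a section of the smooth morphism $\bP\to S$ and is therefore regularly immersed of codimension $3$ and flat over $S$; in the general case $Z$ is their disjoint union and is locally a single such regular immersion. For a regularly immersed, $S$-flat center the Rees algebra $\bigoplus_{m\ge 0}\ms I_Z^m$ has each graded piece flat over $S$ with formation commuting with base change (there are no higher $\mathrm{Tor}$ contributions, as one sees from the filtration of $\ms O_{\bP}/\ms I_Z^m$ by the symmetric powers $\operatorname{Sym}^k(\ms I_Z/\ms I_Z^2)$ of the locally free conormal sheaf), so $\bProj$ of it commutes with base change and $\Res(\bPhi_{S'})=\Res(\bPhi)\times_S S'$ canonically. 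Since $\rho$ was built solely from the universal property of the blowup and from sections of $\ms O(1)$ that pull back, the formation of $\rho$, of each $E_i$, and of each $\iota_i$ commutes with this base change as well. Pinning down this flat-center base-change statement is the crux.

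Finally I would verify that $\pr_i\circ\rho\circ\iota_i\colon E_i\to\bP^2$ is an isomorphism. Because $Z_i$ is regularly immersed of codimension $3$, $E_i$ is the projectivized normal bundle $\bP(N_{Z_i/\bP})$, a $\bP^2$-bundle over $Z_i\cong S$. The assertion is local on $\bP$ and on $S$, and locally the picture is the base change to $S$ of the model $\Bl_{0}\bA^3\to\bP^2$ used in Proposition~\ref{prop:resolution-of-config}, for which the exceptional divisor maps isomorphically to $\bP^2$; since the center $\{0\}\times S$ of that model is $S$-flat, the isomorphism persists after base change, giving the claim. Alternatively, having established base-change compatibility one may check the assertion on geometric fibers, where it is exactly Proposition~\ref{prop:resolution-of-config}, and conclude since a fiberwise isomorphism of flat, proper, finitely presented $S$-schemes is an isomorphism.
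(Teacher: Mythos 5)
Your proposal is correct and follows essentially the same route as the paper's (very terse) proof: existence and uniqueness of $\rho$ via Lemma \ref{L:extendy}, base-change compatibility from flatness over $S$, and verification of the exceptional-divisor isomorphism by passing to geometric fibers and invoking Proposition \ref{prop:resolution-of-config}. The only difference is one of detail: where the paper compresses the base-change step into ``the flatness of everything over $S$,'' you correctly pin it down via the Rees-algebra argument for centers that are sections of a smooth morphism (regularly immersed, $S$-flat, with locally free conormal sheaf), which is exactly the justification the paper leaves implicit.
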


\begin{proof}
  The arrow $\rho$ exists again by \cref{L:extendy}, and the
  functoriality follows from the functoriality of 
 \cref{L:extendy} and the flatness of everything over $S$. Finally,
  the isomorphism condition can be checked on geometric fibers, which
  reduces it to \cref{prop:resolution-of-config}.
\end{proof}

\begin{defn}\label{defn:multiview-scheme} Given a general multiview
configuration $\bPhi$ of length $n$, the scheme-theoretic image of the morphism $\rho$
described in \cref{prop:resolution-of-config-rel} is the \emph{multiview 
scheme\/}
of $\bPhi$. Similarly, given a calibrated multiview configuration 
$(\bPhi,C,(C_1,\ldots,C_n))$, there is an associated flag $\Flag(\bPhi,C)$ 
sitting inside the flag scheme $C_1\times\cdots\times C_n\subset(\bP^2)^n$.
\end{defn}

\begin{notation} The multiview scheme of $\bPhi$ will be denoted
$\Scheme(\bPhi)$. It is a closed subscheme of $(\bP^2)^{\length(\bPhi)}$.
\end{notation}

In the following, we fix conics in $\bP^2$ and only record the curve 
$C\subset\bP^3$ when considering calibrations.
\begin{prop}\label{prop:iso-iso} Two general multiview configurations
$\bPhi^1,\bPhi^2$ of length $n$ over $S$ are isomorphic if and only if
$\Scheme(\bPhi^1)=\Scheme(\bPhi^2)$ as closed subschemes of
$(\bP^2_S)^n$. Similarly, two general calibrated multiview configurations 
$(\bPhi_1,C_1)$ and $(\bPhi_2,C_2)$ are isomorphic if and only if their flags 
$\Flag(\bPhi_1,C_1)$ and $\Flag(\bPhi_2,C_2)$ are equal.
\end{prop} 

The proof of \cref{prop:iso-iso} is a
modification of that of \cref{prop:config-isom-image}. We
require a modification of \cref{lem:stupid-extend}.
\begin{lem} Suppose $A$ is a ring and $U\subset\bP^3_A$ is an open
subset such that for every geometric point $A\to\kappa$ the fiber
$U_{\kappa}\subset\bP^3_\kappa$ has complement of codimension at least
$2$. Suppose $\alpha:U\to\bP^3_A$ is a morphism such that
$\alpha^\ast\ms O(1)=\ms O_U(1)$. Then $\alpha$ extends to a unique
automorphism of $\bP^3_A$.
\end{lem}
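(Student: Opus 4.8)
The plan is to follow the absolute case (Lemma \ref{lem:stupid-extend}): encode $\alpha$ by four linear forms, extend them across the complement $Z := \bP^3_A \setminus U$, and show the resulting matrix over $A$ is invertible. Since $\alpha^\ast \ms O(1) \cong \ms O_U(1)$, the morphism $\alpha$ is given by a surjection $\ms O_U^{\oplus 4} \twoheadrightarrow \ms O_U(1)$, that is, by four sections $s_0,\dots,s_3 \in \Gamma(U, \ms O_U(1))$ with no common zero on $U$. Everything comes down to extending these across $Z$ and then controlling the linear algebra of the result.

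First I would show the restriction map $\Gamma(\bP^3_A, \ms O(1)) \to \Gamma(U, \ms O(1))$ is an isomorphism; this is the relative form of algebraic Hartogs extension. By the local cohomology sequence for the pair $(\bP^3_A, U)$ it suffices to prove $\H^0_Z(\bP^3_A, \ms O(1)) = \H^1_Z(\bP^3_A, \ms O(1)) = 0$, equivalently that $\ms O(1)$ has depth $\geq 2$ along $Z$. Since $\ms O(1)$ is flat over $A$ and depth adds in flat families, this may be checked on geometric fibers: on each $\bP^3_\kappa$ the sheaf $\ms O(1)$ is a line bundle on a regular (hence Cohen--Macaulay) scheme, and $\codim(\bP^3_\kappa, Z_\kappa) \geq 2$ by hypothesis, so $\operatorname{depth}_{Z_\kappa} \ms O(1) \geq 2$. (For non-Noetherian $A$ one first reduces to a Noetherian, even finitely generated, subring by a standard limit argument.) Hence the $s_i$ extend uniquely to $\widetilde s_i \in \Gamma(\bP^3_A, \ms O(1))$; writing each $\widetilde s_i$ in the coordinate forms $x_0,\dots,x_3$ produces a matrix $M \in \M_4(A)$ with $(\widetilde s_0,\dots,\widetilde s_3)^T = M\,(x_0,\dots,x_3)^T$, and the linear rational self-map $\widetilde\alpha : \bP^3_A \dashrightarrow \bP^3_A$ it defines restricts to $\alpha$ on $U$ by uniqueness of the extension.

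The main obstacle is the remaining step: showing $M \in \operatorname{GL}_4(A)$, i.e.\ $\det M \in A^\times$. The codimension hypothesis alone does not suffice, since it only forces the fiberwise base locus $\bP(\ker M_\kappa)$ to lie in the codimension-$\geq 2$ set $Z_\kappa$, hence only gives $\operatorname{rank} M_\kappa \geq 2$; a rank-$2$ matrix whose kernel-line lies in $Z_\kappa$ satisfies all the stated hypotheses but collapses $\bP^3_\kappa$ onto a line. What rules this out is that $\alpha$ is \emph{birational} (as in Lemma \ref{lem:stupid-extend}, and as holds in the application to Proposition \ref{prop:iso-iso}). Granting this, I would argue fiberwise: for each geometric point $\Spec \kappa \to \Spec A$ the map $\alpha_\kappa$ is birational, hence dominant, so the linear rational self-map of $\bP^3_\kappa$ with matrix $M_\kappa$ is dominant; but a linear rational self-map of $\bP^3$ is dominant exactly when its matrix is invertible, so $\det M_\kappa \neq 0$. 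Thus the image of $\det M$ in every residue field of $A$ is nonzero, so $\det M$ lies in no prime ideal and is therefore a unit.

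With $M$ invertible, $\widetilde\alpha$ is a genuine linear automorphism of $\bP^3_A$ restricting to $\alpha$. Uniqueness is then automatic: $U$ contains every associated point of $\bP^3_A$ (its complement is fiberwise of positive codimension), hence is schematically dense, so any two extensions to morphisms into the separated scheme $\bP^3_A$ that agree on $U$ must coincide. To summarize, the one genuinely technical point is the relative depth bound securing the section extension over an arbitrary base $A$, while the input that makes the statement true — rather than merely yielding a degenerate endomorphism onto a linear subspace — is the fiberwise birationality of $\alpha$.
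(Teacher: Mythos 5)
Your proposal is correct, and its first half follows the same core strategy as the paper: reduce everything to showing that restriction $\Gamma(\bP^3_A,\ms O(1))\to\Gamma(U,\ms O(1))$ is an isomorphism, i.e.\ a relative Hartogs statement across the fiberwise codimension-$2$ complement. The only difference there is in execution: the paper reduces, via the projection formula, to the statement that the adjunction map $\ms O_{\bP^3_A}\to\iota_\ast\ms O_U$ is an isomorphism and cites \cite{MR2047697} for that, whereas you prove the vanishing directly by the local-cohomology/depth computation on geometric fibers (with a limit argument for non-Noetherian $A$). Both are fine, and yours is self-contained.

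The substantive divergence is your second half, and there you have put your finger on a genuine gap in the paper's own argument. The paper's proof asserts that, ``by the universal property of projective space,'' the section-extension isomorphism alone suffices, and stops; but as you observe, the extended sections need not generate $\ms O(1)$ everywhere, and even where they define a map it need not be an automorphism. Your rank-$2$ example makes this concrete: for $M$ of rank $2$ with $L=\bP(\ker M)$ and $U=\bP^3\setminus L$, the map defined by $M$ satisfies every stated hypothesis (including $\alpha^\ast\ms O(1)\cong\ms O_U(1)$, since linear projections pull back $\ms O(1)$ to $\ms O(1)$), yet collapses $U$ onto a line and extends to no automorphism. So the lemma as literally stated is false; the missing hypothesis is the (fiberwise) birationality or dominance of $\alpha$, which is explicitly assumed in the absolute version (Lemma \ref{lem:stupid-extend}) and does hold in the intended application (Proposition \ref{prop:iso-iso}). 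Your repair --- dominance on each geometric fiber forces $\det M_\kappa\neq 0$, hence $\det M$ avoids every prime of $A$ and is a unit --- is exactly what is needed, and your uniqueness argument via schematic density (every associated point of $\bP^3_A$ lies in $U$) is also sound. In short, your proof does not merely reproduce the paper's; it completes and corrects it, and the added birationality hypothesis should be recorded in the statement of the lemma.
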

\begin{proof} By the universal property of projective space, it
suffices to show that restriction defines an
isomorphism $$\Gamma(\bP^3_A,\ms O(1))\simto\Gamma(U,\ms O(1)).$$ To
show this, it suffices to show that the adjunction map $\nu(1):\ms
O_{\bP^3}(1)\to\iota_\ast\ms O_U(1)$ is an isomorphism of sheaves. By
the projection formula, it suffices to show that the adjunction map
for the structure sheaf
        $$\nu:\ms O_{\bP^3_A}\to\iota_\ast\ms O_U$$
        is an isomorphism. But this is precisely Proposition 3.5 of
\cite{MR2047697}.
\end{proof}

\begin{prop}\label{base-change-image} If $\bPhi$ is a general
multiview configuration over $S$ then for all base changes $T \to S$
we have that the natural morphism
        \[ \Scheme(\bPhi) \times_S T \to \Scheme(\bPhi \times_S T)
        \] is an isomorphism. That is, formation of the associated
multiview scheme is compatible with base change. Furthermore,
$\Scheme(\bPhi)$ is flat over the base.
\end{prop}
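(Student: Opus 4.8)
The plan is to reduce the entire statement to the behavior of $\rho_\ast\sO_{\Res(\bPhi)}$ under base change, and to control that by derived base change, feeding in the cohomological input of Lemma \ref{lem:surj-structure-morph}. First I would record, using the surjectivity of $\rho^\sharp\colon\sO_{(\bP^2_S)^n}\to\rho_\ast\sO_{\Res(\bPhi)}$ (which may be checked fibrewise and reduced to Nakayama exactly as in Lemma \ref{lem:surj-structure-morph}), that $\Scheme(\bPhi)$ is the quotient of $\sO_{(\bP^2_S)^n}$ by $\ker\rho^\sharp$, so its structure sheaf is $\rho_\ast\sO_{\Res(\bPhi)}$. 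By Proposition \ref{prop:resolution-of-config-rel} the resolution is compatible with base change, so for $g\colon T\to S$ the square with vertices $\Res(\bPhi)\times_S T$, $\Res(\bPhi)$, $(\bP^2_T)^n$, $(\bP^2_S)^n$ is Cartesian and $\rho$ base-changes to $\rho_T$. Writing $\bPhi_T:=\bPhi\times_S T$, the two assertions become: (i) the canonical closed immersion $\Scheme(\bPhi_T)\hookrightarrow\Scheme(\bPhi)\times_S T$ is an isomorphism, and (ii) $\rho_\ast\sO_{\Res(\bPhi)}$ is flat over $S$.

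The essential point — and the reason this is not a formality — is that scheme-theoretic image does \emph{not} commute with base change in general; one must use the special geometry of $\rho$. I would argue by derived base change along $\bar g\colon(\bP^2_T)^n\to(\bP^2_S)^n$. Set $K:=\R\rho_\ast\sO_{\Res(\bPhi)}$, a bounded complex with coherent cohomology. Derived base change yields
\[
L\bar g^\ast K\;\simeq\;\R\rho_{T\ast}\,\sO_{\Res(\bPhi)\times_S T}
\]
\emph{provided} the Cartesian square is Tor-independent, and verifying this Tor-independence is the main step. By the change-of-rings computation it reduces, using that $(\bP^2_S)^n\to S$ is smooth (so $\sO_{(\bP^2_S)^n}$ is flat over $\sO_S$), to the flatness of $\sO_{\Res(\bPhi)}$ over $S$; concretely, for local rings $R=\sO_{(\bP^2_S)^n}$, $A=\sO_S$, $M=\sO_{\Res(\bPhi)}$ and $B$ the local ring of a point of $(\bP^2_T)^n$, one has $M\otimes^{\mathbf L}_R(R\otimes_A B)\simeq M\otimes^{\mathbf L}_A B$, which is concentrated in degree $0$ precisely because $M$ is $A$-flat. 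Finally $\Res(\bPhi)\to S$ is flat because it is the blowup of the smooth $S$-scheme $\bP$ along the pairwise disjoint sections $Z_i$, whose fibres are blowups of $\bP^3$ at $n$ distinct points and hence form a flat family.

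With the displayed isomorphism both conclusions drop out. Taking $T$ to be a geometric point $\bar s$ of $S$, the right-hand side is $\R\rho_{\bar s\ast}\sO_{\Res(\bPhi_{\bar s})}$, which by the field case of Lemma \ref{lem:surj-structure-morph} is concentrated in degree $0$; since flatness of $(\bP^2_S)^n$ over $S$ gives $L\bar g^\ast K\simeq K\otimes^{\mathbf L}_{\sO_S}k(\bar s)$, the complex $K$ has $S$-fibrewise cohomology in degree $0$ for every $\bar s$. By the standard fibrewise criterion (cohomology and base change) $K$ is therefore a single sheaf, flat over $S$, namely $\rho_\ast\sO_{\Res(\bPhi)}$; this simultaneously records the relative vanishing $\R^i\rho_\ast\sO_{\Res(\bPhi)}=0$ for $i>0$ and gives (ii). For arbitrary $T\to S$ the displayed map is then the degree-$0$ comparison $g^\ast\rho_\ast\sO_{\Res(\bPhi)}\to\rho_{T\ast}\sO_{\Res(\bPhi)\times_S T}$, identified as an isomorphism; translating through $g^\ast\sO_{\Scheme(\bPhi)}=\sO_{\Scheme(\bPhi)\times_S T}$ and $\rho_{T\ast}\sO_{\Res(\bPhi)\times_S T}=\sO_{\Scheme(\bPhi_T)}$ shows the closed immersion of (i) is an isomorphism. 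The one genuine obstacle is the Tor-independence of the Cartesian square; once it is pinned to flatness of the resolution over $S$, everything else is bookkeeping with Lemma \ref{lem:surj-structure-morph}.
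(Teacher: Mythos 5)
Your proposal is correct and takes essentially the same route as the paper's own proof: both show that $\R\rho_\ast\ms O_{\Res(\bPhi)}$ restricts on each fiber of $(\bP^2_S)^n\to S$ to a complex concentrated in degree zero (combining derived base change with the field case of Lemma \ref{lem:surj-structure-morph}), and then apply the fibrewise flatness criterion (the paper cites \cite[Lemma 3.31]{huybrechts2006fourier}) to conclude that it is an $S$-flat sheaf whose formation commutes with base change, from which both conclusions follow. The only distinction is one of explicitness: you verify the Tor-independence of the relevant Cartesian square (reducing it to flatness of $\Res(\bPhi)$ over $S$), a hypothesis the paper's appeal to ``cohomology and base change'' leaves implicit.
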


\begin{proof} By \cref{lem:surj-structure-morph} the structure
morphism $\ms O_{(\bP^2)^n} \to \rho_* \ms O_{\Res(\bPhi)}$ is
surjective.  Consider the triangle in the derived category
        \[ I \to \ms O_{(\bP^2)^n} \to \RR \rho_* \ms O_{\Res(\bPhi)}
\to I[1].
        \]
        
        Let $i: (\bP^2)^n_q \to (\bP^2)^n$ be an embedding of a
fiber. Pulling back to the fiber and using cohomology and base change
we have
        \begin{align*} \LL i^* \RR \rho_* \ms O_{\Res(\bPhi)} &\simeq
\RR \rho_*\LL i^*_{\Res(\bPhi)} \ms O_{\Res(\bPhi)} \\ &\simeq \RR
\rho_* (\ms O_{\Res(\bPhi)})_q \\ &\simeq (\ms O_{\Res(\bPhi)})_q.
        \end{align*} Applying \cite[ 3.31]{huybrechts2006fourier} to $\RR
\rho_* \ms O_{\Res(\bPhi)}$, we see that it is quasi-isomorphic to a
sheaf flat over the base. But $\mathscr{H}^0(\RR \rho_\ast\ms
O_{\Res(\bPhi)})$ is $\rho_\ast\ms O_{\Res(\bPhi)}$. Thus, we conclude
that the short exact sequence
        \[ 0 \to \ms I \to \ms O_{(\bP^2)^n} \to \rho_* \ms
O_{\Res(\bPhi)} \to 0
        \] consists of $S$-flat sheaves and is compatible with
arbitrary base change.  This establishes the result.
\end{proof}

\section{Moduli and deformation theory}
\label{sec:defos-moduli}

\subsection{Moduli of uncalibrated camera configurations} 
\label{sec:uncal-moduli} In this section we describe the basic moduli
problem attached to uncalibrated camera configurations. In 
\cref{sec:defos-config} we will study the deformation theory of a
configuration $\bPhi$, especially as it relates to the deformation
theory of the associated scheme $\Scheme(\bPhi)$. Ultimately this will allow us 
to embed the moduli space into the Hilbert scheme.

\begin{defn}\label{defn:cam_n} Given a positive integer $n$, the \emph{functor
of camera configurations of length $n$}, denoted $\Cam_n$, has as value over a
scheme $S$ the set of isomorphism classes of general relative multiview
configurations of length $n$.
\end{defn}
Since a camera configuration of length at least $2$ has trivial automorphism
group, it follows from standard descent theory that $\Cam_n$ is a sheaf in the
fppf topology. In this section we will show that it is a quasi-projective
variety.

\begin{notation}\label{notn:quotient}
  Let $\M^n \subset \M_{3 \times 4}^n$ be the locus of $n$-tuples of full rank
  $3 \times 4$ matrices whose kernels are pairwise distinct. Let $T$ be the torus given by the kernel of the multiplication map $\operatorname{\bf G}_m^n\to\operatorname{\bf G}_m$. There is a natural
  free action of $T\times\GL_4$ on $\M^n$ (where the torus $T$ acts diagonally by scaling). Moreover, since $T\times\GL_4$ is 
  reductive over $\Z$ and $M^n_{3\times 4}$ is affine, we can realize the
  quotient sheaf $\M^n/T\times\GL_4$ as an open subvariety of the GIT quotient
  $M^n_{3\times 4}/\!\!/T\times\GL_4$. In particular, the quotient $\M^n/T\times\GL_4$ is a
  smooth quasi-projective variety. Because the action is free, we also know the
  functor of points of $\M^n/T\times\GL_4$: the $S$-valued points are given by pairs
  $(L\to S,\phi:S\to\M^n)$, where $L\to S$ is a $T\times\GL_4$-torsor and $\phi$ is a
  $T\times\GL_4$-equivariant map. In particular, a morphism $\M^n/T\times\GL_4\to Y$ to a
  scheme $Y$ is the same thing as a $T\times\GL_4$-invariant morphism $\M^n\to Y$.
\end{notation}

\begin{prop}\label{prop:quotient description}
   There is a natural isomorphism of functors $c:\M^n/T\times\GL_4\to\Cam_n$.
\end{prop}
\begin{proof}
  Sending a $3\times 4$-matrix to its associated camera defines a morphism
  $\M^n\to\Cam_n$. This is $T\times\GL_4$-equivariant, since, by definition, projective
  automorphisms of $\bP^3$ do not affect the isomorphism class of a camera
  configuration. To see that $c$ is an isomorphism, it suffices to show that
  $c(R)$ is a bijection for any strictly Henselian local ring $R$. In this case,
  every form of $\bP^3$ is trivial, so we see that any camera configuration is
  given by a tuple of matrices, showing that $c$ is surjective. On the other
  hand, by definition, two such configurations are isomorphic if and only if
  they differ by an automorphism of $\bP^3$ and individual scalings of the factors, which says precisely that they lie
  in the same $T\times\GL_4(R)$-orbit in $\M^n(R)$. The result follows.
\end{proof}

\begin{cor}
   If $n>1$ then the space $\Cam_n$ is a smooth quasi-projective scheme over
   $\Spec\bZ$.
\end{cor}
\begin{proof}
This follows immediately from \cref{prop:quotient description} and the remarks in \cref{notn:quotient}.
\end{proof}


\subsection{Deformations of multiview configurations}
\label{sec:defos-config}

In this section, we study the relationship between the infinitesimal deformation
theory of a camera configuration and the deformation theory of its associated
multiview scheme. As we will see in \cref{sec:morph-hilb}, the
deformation-theoretic approach gives strong results on the relationship between
$\Cam_n$ and $\Hilb_{(\bP^2)^n}$, clarifying and improving the groundbreaking
results of \cite{aholt2011hilbert}.  In particular, our infinitesimal analysis
will apply at all points. These methods are very different from the
ideal-theoretic methods of \cite{aholt2011hilbert}. It would be especially
interesting to understand how the cotangent complex argument of
\cref{sec:collinear-def-lift} relates to the Gr\"obner basis calculations in
\cite{aholt2011hilbert}.

\begin{defn}\label{defn:defo} Fix a ring $A$ containing an ideal $I$
such that $I^2=0$ and let $A_0=A/I$.  Suppose $\bPhi^0$ is a relative
multiview configuration of length $n$ over $A_0$. An
\emph{infinitesimal deformation of $\bPhi^0$ to $A$\/} is a pair
$(\bPhi, \eps)$, where $\bPhi$ is a multiview configuration of length
$n$ over $A$ and $\eps:\bPhi\tensor_A A_0\xrightarrow{\sim}\bPhi^0$ is
an isomorphism of relative multiview configurations.
        
An \emph{isomorphism between infinitesimal deformations\/}
$(\bPhi,\eps)$ and $(\bPhi',\eps')$ of $\bPhi^0$ is an isomorphism
$\alpha:\bPhi\xrightarrow{\sim}\bPhi'$ of relative multiview
configurations such that $\eps'\circ\alpha\tensor_A A_0=\eps.$
\end{defn}

\begin{notation}\label{notn:defo funcs}
  We will write $\Def_{\bPhi^0}$ for the functor of isomorphism classes of
  infinitesimal deformations of $\bPhi^0$, and
  $\Def_{\Scheme(\bPhi^0)\subset(\bP^2)^{\length(\bPhi^0)}}$ for the usual
  functor of infinitesimal deformations of the point $[\Scheme(\bPhi^0)]$ of the
  Hilbert scheme $\Hilb_{(\bP^2)^{\length(\bPhi^0)}}$.
\end{notation}

Our goal in this section is to prove the following, which is the key step in our
generalization of the results of \cite{aholt2011hilbert}.

\begin{prop}\label{prop:defo-proj} 
  If $\bPhi$ is a general multiview configuration of length $n>2$
  then the morphism
  $$\Scheme:\Def_{\bPhi^0}\to\Def_{\Scheme(\bPhi^0)\subset(\bP^2)^n}$$
  is an isomorphism of deformation functors.
\end{prop}
\begin{proof}
  The proof will be developed through this section. In particular, the
  injectivity of $\Scheme$ follows from \cref{prop:iso-iso}, and surjectivity
  follows from \cref{prop:def-coll}.
\end{proof}

That is, if $\bPhi$ is a general multiview configuration of length $n>2$ with
associated multiview variety $V\subset(\bP^2)^n$ then, we have that the
infinitesimal deformations of $\bPhi$ are in bijection with the infinitesimal
deformations of $V$ as a closed subscheme of $(\bP^2)^n$. The proof will work
roughly as follows.

\begin{enumerate}[leftmargin=1.5cm]
        \item First, we will recall the well-known description of abstract
deformations of $V$ as a scheme. As we will see, $V$ has a property that we will
call \emph{essential rigidity\/}.
        \item Using this essential rigidity, we will show that any deformation
of $V$ as a closed subscheme of $(\bP^2)^n$ arises from a deformation of
$\bPhi$. In the collinear case this is non-trivial, because
$\Res(\Phi)\to(\bP^2)^n$ contracts a line, but a simple argument with the cotangent complex gives the desired result.
        \item Using \cref{prop:iso-iso}, we have that two
deformations of $\bPhi$ give rise to the same deformation of $V$ if
and only if they are isomorphic, completing the proof.
\end{enumerate}

It is worth noting (as hinted at in this outline) that the proof we give here is
almost purely geometric. We do not rely on dimension estimates, ideal-theoretic
calculations, etc. The arguments are simple variants of classical Italian
geometric arguments, first used to study the geometry of projective surfaces.
\Cref{prop:defo-proj} is ultimately the reason that the space of multiview
configurations admits an open immersion into the Hilbert scheme, as we will see
in \cref{sec:morph-hilb}.

\subsubsection{Essential rigidity of blowups of $\bP^3$}
\label{sec:rigid}

In this section we fix a commutative ring $A_0$, a square-zero
extension $$I\subset A\to A_0,$$ and a collection of pairwise
everywhere-disjoint sections $$\sigma_i:\Spec A_0\to\bP^3_{A_0}.$$ We
write $P_0$ for the blowup $\Bl_{Z_0}\bP^3_{A_0}$, where $Z_0$ is the
reduced closed subscheme of $\bP^3_{A_0}$ supported on the union of
the images of the $\sigma_i$. For the most part, these results are well-known. 
Unfortunately, the available literature tends not work in sufficient generality 
(for example, \cite{sernesi2007deformations} works over a fixed field 
$\mathbf{k}$).

\begin{prop}\label{sec:essent-rigid-blow-1} Given a deformation $P$ of
$P_0$ over $A$, there is a unique morphism $$\beta:P\to\bP^3_A$$
deforming the canonical blow-down map $$\beta_0:P_0\to\bP^3_{A_0},$$
up to infinitesimal automorphism of $\bP^3_{A}$. Moreover, $\beta$
realizes $P$ as the blowup of $\bP^3_A$ at a closed subscheme $Z$ that
deforms $Z_0$ (and $Z$ is a union of $n$ sections of $\bP^3_A$).
\end{prop}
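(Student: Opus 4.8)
The plan is to read the blow-down $\beta$ off from the deformation theory of morphisms into the smooth, trivially deformed target $\bP^3_A$, and then to recognize $P$ as a blowup by a local analysis along the exceptional locus. Concretely, given the deformation $P$ of $P_0$ over the square-zero extension $I\subset A\to A_0$ together with the trivial deformation $\bP^3_A$ of $\bP^3_{A_0}$, I would invoke the standard obstruction theory for extending a morphism to a smooth target: the obstruction to extending $\beta_0:P_0\to\bP^3_{A_0}$ to an $A$-morphism $\beta:P\to\bP^3_A$ lies in $\H^1(P_0,\beta_0^\ast T_{\bP^3_{A_0}})\otimes_{A_0}I$, and when this obstruction vanishes the set of extensions is a torsor under $\H^0(P_0,\beta_0^\ast T_{\bP^3_{A_0}})\otimes_{A_0}I$.

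Both cohomology groups are computed at a stroke. Because $\beta_0$ is the blowup of $\bP^3_{A_0}$ along the smooth, codimension-three center $Z_0$ (a disjoint union of sections), one has $\R\beta_{0\ast}\ms O_{P_0}=\ms O_{\bP^3_{A_0}}$, so the projection formula gives $\R\beta_{0\ast}\beta_0^\ast T_{\bP^3_{A_0}}=T_{\bP^3_{A_0}}$ and hence $\H^i(P_0,\beta_0^\ast T_{\bP^3_{A_0}})=\H^i(\bP^3_{A_0},T_{\bP^3_{A_0}})$ for all $i$. The rigidity of projective space gives $\H^1(\bP^3,T_{\bP^3})=0$, so $\beta$ exists; and $\H^0(\bP^3,T_{\bP^3})$ is the Lie algebra of $\operatorname{Aut}(\bP^3_A)$. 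Since the infinitesimal automorphisms of $\bP^3_A$ lying over $\bP^3_{A_0}$ act on the set of extensions through exactly this group, the torsor structure forces that action to be simply transitive — which is precisely the asserted uniqueness of $\beta$ up to infinitesimal automorphism. Note that the obstruction theory produces an honest morphism to $\bP^3_A$, so there is no base-point-freeness to check separately.

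It then remains to identify $P$ with a blowup. Since $\bP^3_A\to\Spec A$ is smooth, each section $\sigma_i$ lifts along the nilpotent thickening $A_0\to A$; the local structure of $\beta$ along the exceptional locus (described below) singles out canonical such lifts $\tilde\sigma_i$, and I would set $Z=\bigsqcup_i\tilde\sigma_i(\Spec A)$. Disjointness is an open condition that holds modulo $I$, so $Z$ is a disjoint union of $n$ sections deforming $Z_0$. To see that $\beta$ realizes $P$ as $\Bl_Z\bP^3_A$ I would check that $\ms I_Z\cdot\ms O_P$ is invertible, apply the universal property of blowing up to obtain a comparison morphism $P\to\Bl_Z\bP^3_A$, and observe that this morphism reduces modulo $I$ to the identity of $P_0=\Bl_{Z_0}\bP^3_{A_0}$; as both sides are $A$-flat, it is therefore an isomorphism.

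The step I expect to be the main obstacle is exactly this last identification: producing $Z$ with its correct scheme structure and verifying that $\ms I_Z\cdot\ms O_P$ is invertible. The cohomological input for existence and uniqueness of $\beta$ is formal once the projection-formula computation is in place, but controlling $\beta$ near the exceptional locus requires a genuinely local argument. I would work in a formal or \'etale neighborhood of each $\tilde\sigma_i$, where $\beta$ is a deformation of the model blow-down $\Bl_0\bA^3\to\bA^3$, and check directly that the deformed map is again the blowup of a section; this simultaneously pins down the canonical center and gives invertibility of $\ms I_Z\cdot\ms O_P$. With the local model in hand, the global blowup description follows by gluing.
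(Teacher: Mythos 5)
Your treatment of the first half of the statement --- existence and uniqueness of $\beta$ --- is correct and takes a genuinely different route from the paper. You deform the morphism directly, using the obstruction theory for maps to a smooth target valued in $\H^i(P_0,\beta_0^\ast T_{\bP^3_{A_0}})\otimes_{A_0}I$, killed by the projection formula (via $\R\beta_{0\ast}\ms O_{P_0}=\ms O_{\bP^3_{A_0}}$) and the rigidity of projective space. The paper instead deforms the pair consisting of the line bundle $\beta_0^\ast\ms O(1)$ and its four tautological sections, using the vanishing of $\H^1(P,\ms O)$ and $\H^2(P,\ms O)$ obtained from the same pushforward computation. The two calculations are equivalent (they are related by the Euler sequence), and yours delivers the uniqueness-up-to-infinitesimal-automorphism statement somewhat more transparently. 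But the paper's choice is not an accident: it exits the first step holding the deformed bundle $L=\beta^\ast\ms O(1)$, which is exactly the tool it later uses to see that the deformed exceptional divisors are contracted ($L$ restricts trivially to them).

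The second half is where your proposal has a genuine gap. The canonical lifts $\tilde\sigma_i$ are supposed to be ``singled out by the local structure of $\beta$ along the exceptional locus,'' and you then propose to work in a neighborhood of $\tilde\sigma_i$ --- but that local structure, and hence $\tilde\sigma_i$ itself, is precisely what is in question, so as written the construction is circular. (This is repairable, since a nilpotent thickening does not change the underlying topological space, so ``a neighborhood of the image of $\sigma_i$'' makes sense; but the deeper problem remains.) The deferred check, ``the deformed map is again the blowup of a section,'' is not a routine chart-by-chart computation: it is the local form of the proposition itself. What actually proves it, and what is absent from your proposal, is the following pair of observations from the paper's proof. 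Each exceptional divisor $E_i\subset P_0$ has normal bundle $\ms O_{E_i}(-1)$, and since $\H^0(E_i,\ms O_{E_i}(-1))=\H^1(E_i,\ms O_{E_i}(-1))=0$, each $E_i$ deforms \emph{uniquely} to an $A$-flat divisor in $P$. That deformed divisor is proper over $A$ with $\H^0$ equal to $A$, so $\beta$ contracts it to a section $Z_i$ of $\bP^3_A$ lifting $\sigma_i$ (this is the Stein factorization step); this is what produces the canonical center $Z$, as the union of the $Z_i$. One then shows that $\ms I_Z\cdot\ms O_P$ equals the ideal sheaf of the union of the deformed exceptional divisors --- a Nakayama argument using flatness --- which yields exactly the invertibility you need. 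From there your endgame (universal property of the blowup, then ``isomorphism mod $I$ plus $A$-flatness implies isomorphism'') coincides with the paper's and is fine. In short, your architecture is right, but the step you yourself flag as the main obstacle is the one place a new idea is required --- deform the exceptional divisors rather than guessing the center, then contract them by properness --- and the proposal does not contain it.
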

\begin{proof} 
  If one is willing to work entirely over a field (although we are here working
  over $\Z$), one can extract this from \cite[Proposition
  3.4.25(ii)]{sernesi2007deformations}. It is not difficult to prove this in
  full generality for blowups of projective spaces along collections of
  sections, by showing that the blowdown map admits a canonical deformation, and
  each deformed exceptional divisor maps to a section under this deformed
  blowdown. We omit the details for the sake of space.  
\end{proof}

\subsubsection{Lifting deformation for non-collinear configurations}
\label{sec:lift}

In this section, we explain how any deformation of a non-collinear
multiview scheme lifts to a deformation of the associated multiview
configuration. Fix a deformation situation $$I\subset A\to A_0$$ and a
non-collinear multiview configuration $\bPhi^0$ of length $n$ over
$A_0$ with scheme $\Scheme(\bPhi^0)$.

\begin{prop}\label{sec:lift-deform-mult-1} If $X\subset(\bP^2)^n_A$ is
an $A$-flat deformation of $\Scheme(\bPhi^0)$ then there is a
deformation $\bPhi$ of $\bPhi^0$ such that $\Scheme(\bPhi)=X$ as
closed subschemes of $(\bP^2)^n$. Moreover, $\bPhi$ is unique up to
unique isomorphism of deformations of $\bPhi^0$ over $A$.
\end{prop}
\begin{proof} Since $\bPhi^0$ is non-collinear, the natural morphism
        $$\Res(\bPhi^0)\to\Scheme(\bPhi^0)\subset(\bP^2)^n$$ is an isomorphism.
        By \cref{sec:essent-rigid-blow-1}, any deformation
of $\Scheme(\bPhi^0)$ is a blowup $P$ of $\bP^3_A$ at $n$ disjoint
sections over $\Spec A$. The deformation thus results in a rational
map
        $$\bPhi:\bP^3_A\dashrightarrow(\bP^2_A)^n$$
        extending $\bPhi^0$. We wish to show that $\bPhi$ is a
relative multiview configuration in the sense of 
\cref{defn:general-rel}. To do this, it suffices to check that
composition with each projection is a relative pinhole camera. Write
$p:\bP^3_A\dashrightarrow\bP^2_A$ for one such projection; we will
abuse notation and also write $p$ for the corresponding map
$P\to\bP^2_A$ from the blowup. We will write $E$ for the exceptional
divisor associated to $p$ and $Z$ for the section blown up to make
$E$. That is, we assume that $p$ is the $i$th projection of $\bPhi$
and that $E$ is the preimage of the $i$th section in $\bP^3_A$, which
we call $Z$, uniformly omitting $i$ from the notation. By the pinhole
camera assumptions on $\bPhi^0$, $p|{E_{A_0}}$ maps $E$ isomorphically
to $\bP^2_{A_0}$. It follows from Nakayama's lemma that $p|_E$ maps
$E$ isomorphically to $\bP^2_A$.
        
        Write $U\subset\bP^3_A$ for the complement of the sections
that are blown up to resolve $\bPhi$. By the previous paragraph, we
see that $U_{A_0}\subset\bP^3_{A_0}$ is precisely the complement of
the camera centers of $\bPhi^0$. By the universal property of
projective space, the morphism $p$ is given by a surjective
morphism $$\lambda:\ms O_P^{\oplus 3}\to\ms L$$ for some $\ms L$ in
$\Pic(P)$. Write $\pi:P\to\bP^3_A$ for the blow-down map. We know from
the definition of pinhole cameras, the rigidity of invertible sheaves
on $P$, and the canonical way to extend morphisms generically across
blowups that $\ms L\cong\pi^\ast(\ms O(1))(-E)$. Moreover, the
resulting arrow
\[
  f:\pi_\ast\ms O^{\oplus 3}\to\ms O_{\bP^3_A}(1)
\]
has the property that its image is precisely $\ms O_{\bP^3_A}(1)\tensor \ms 
I_Z,$ where
$\ms I_Z$ is the ideal sheaf of $Z$. (This follows from the
universal property of blowing up.) This shows that the cokernel of $f$
is an invertible sheaf supported on $Z$, showing that $p$ is a
relative pinhole camera, as desired.
        
        It remains to show that any two such realizations $\bPhi_1$
and $\bPhi_2$ are conjugate by an infinitesimal automorphism of
$\bP^3$. But this follows immediately from 
\cref{prop:iso-iso}.
\end{proof}

\subsubsection{Lifting deformations for collinear configurations}
\label{sec:collinear-def-lift} 

For the sake of computational ease, in this section we consider a
deformation situation $I\subset A\to A_0$ in which $A$ is an Artinian
local ring with maximal ideal $\mf m$ and $\mf mI=0$. Write $k=A/\mf
m$.

We start with a multiview configuration
$\bPhi:\bP^3_{A_0}\dashrightarrow(\bP^2)^n$ whose special fiber
$\bPhi_k$ is collinear. Thus, the morphism
$$\Res(\bPhi_k)\to\Scheme(\bPhi_k)\subset(\bP^2)^n$$
contracts a line $\ell\subset\Res(\bPhi_k)$. To make things easier to
read, write $R=\Res(\bPhi_k)$ and $B=\Scheme(\bPhi_k)$. Write
$L_{R/B}$ for the cotangent complex of the morphism $R\to B$. In
addition, write $E_1,\ldots,E_n\subset R$ for the exceptional
divisors. The usual calculations show that $K_R=\pi^\ast
K_{\bP^3}+2E_1+\cdots+2E_n$.

\begin{lem}\label{lem:cotangent-vanishing} If $n>2$ then 
$\Ext^2_{R}(L_{R/B}, \ms O_R)=0$.
\end{lem}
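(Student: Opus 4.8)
The plan is to analyze the cotangent complex $L_{R/B}$ of the contraction morphism $\rho: R \to B$, where $R = \Res(\bPhi_k)$ and $B = \Scheme(\bPhi_k)$, and show that the obstruction group $\Ext^2_R(L_{R/B}, \ms O_R)$ vanishes when $n > 4$. Since $\rho$ is an isomorphism away from the contracted line $\ell$, the cotangent complex $L_{R/B}$ is supported on $\ell$, and its structure is governed by the local geometry of the contraction. First I would use the fundamental triangle relating $L_{R/B}$ to the absolute cotangent complexes, or more directly exploit the fact that $\rho$ contracts the single line $\ell$ to the unique singular point of $B$ (the image of the strict transform of the line through the collinear camera centers). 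Because the morphism is so mild — an isomorphism outside $\ell$ — one expects $L_{R/B}$ to be concentrated in a small range of degrees, with cohomology sheaves built out of the conormal data along $\ell$.

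The key computation is to identify the relevant twist of $\ms O_\ell$ appearing in $L_{R/B}$. As in the proof of Lemma \ref{lem:surj-structure-morph}, when we blow up $n$ collinear points on a line $L \subset \bP^3$, the strict transform $\ell$ has the property that $\ms I_\ell/\ms I_\ell^2$ (or the relevant normal/conormal sheaf controlling the deformation of the contraction) is a sum of twists $\ms O_\ell(n-1)^{\oplus 2}$, reflecting that the ideal sheaf of a line in $\bP^3$ restricts to $\ms O(-1)^{\oplus 2}$ and each blown-up point twists by $1$. The vanishing of $\Ext^2_R(L_{R/B}, \ms O_R)$ should reduce, via a local-to-global spectral sequence $H^p(R, \mathcal{E}xt^q(L_{R/B}, \ms O_R)) \Rightarrow \Ext^{p+q}_R(L_{R/B}, \ms O_R)$, to cohomology vanishing statements on $\ell \cong \bP^1$ for sheaves of the form $\ms O_\ell(a)$ with $a$ depending linearly on $n$. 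The point is that for $n > 4$ these twists are large enough (or the degrees shift so) that all the contributing $H^1(\ell, \ms O_\ell(a))$ terms vanish, killing the $\Ext^2$.

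I would therefore proceed in the following steps. First, record the structure of $L_{R/B}$: since $\rho$ is a birational contraction that is an isomorphism on the complement of $\ell$, determine the cohomology sheaves $\mathcal{H}^i(L_{R/B})$ explicitly as twisted sheaves on $\ell$, using the local model of contracting a $(-1)$-or-appropriately-negative curve and the computation $\ms I_{X_0}|_{X_0} \cong \ms O(n-1)^{\oplus 2}$ already established in Lemma \ref{lem:surj-structure-morph}. Second, set up the local-to-global spectral sequence for $\Ext^\bullet_R(L_{R/B}, \ms O_R)$ and reduce the target $\Ext^2$ to a sum of cohomology groups $H^p(\ell, \ms O_\ell(a_q))$. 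Third, verify that the hypothesis $n > 4$ forces every such $H^1$ (and the relevant $H^2$, which vanishes automatically on a curve) to be zero, yielding the claim.

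The hard part will be pinning down the precise twists and degrees in $L_{R/B}$ — that is, correctly computing the contribution of $\mathcal{L}_1$ (the first homology of the cotangent complex, carrying the singularity of the contraction) as opposed to the $\Omega_{R/B}$ term, and tracking how the $n$-dependence enters through the conormal twist $\ms O_\ell(n-1)^{\oplus 2}$. Getting the exact numerical threshold $n > 4$ to emerge cleanly from the cohomology vanishing on $\bP^1$ is where the bookkeeping must be done carefully; everything else is formal homological algebra once the sheaves on $\ell$ are identified.
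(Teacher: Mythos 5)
Your proposal has the right overall template --- a spectral sequence computing $\Ext^2_R(L_{R/B},\ms O_R)$, the observation that everything is supported on $\ell$, and a final vanishing of cohomology of line bundles on $\ell\cong\bP^1$ whose degrees depend on $n$ --- and this is indeed the shape of the paper's argument. But the proposal defers exactly the two steps that carry all of the mathematical content, and the specific route you sketch for the first of them is both undone and harder than necessary, so there is a genuine gap. You plan to ``determine the cohomology sheaves $\mathcal{H}^i(L_{R/B})$ explicitly as twisted sheaves on $\ell$,'' and you flag the contribution of $\mathcal{L}_1=\ms H^{-1}(L_{R/B})$ as the hard part. That sheaf encodes the singularity of $B$ at the image point of $\ell$ (the contraction is not a local complete intersection morphism), and computing it is genuinely difficult; you never do it. The paper's key idea is that you never need to: by Serre duality on the smooth projective threefold $R$, $\Ext^p(\ms F,\ms O_R)\cong\H^{3-p}(R,\ms F(K_R))^\vee$ for coherent $\ms F$, so any sheaf supported on the curve $\ell$ has vanishing $\Ext^p(-,\ms O_R)$ unless $p\in\{2,3\}$. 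Feeding this into the spectral sequence $\Ext^p(\ms H^{-q}(L_{R/B}),\ms O_R)\Rightarrow\Ext^{p+q}(L_{R/B},\ms O_R)$ and using that $L_{R/B}$ sits in degrees $\le 0$, every $\ms H^{-q}$ with $q\ge 1$ contributes only in total degree $\ge 3$, so the single term computing $\Ext^2$ is $\Ext^2(\Omega^1_{R/B},\ms O_R)$. (The same support-codimension observation would also rescue your local-to-global variant, since $\mathcal{E}xt^0$ and $\mathcal{E}xt^1$ of sheaves supported on a curve in a threefold vanish; but that observation is precisely what is absent from your plan.)

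The second missing step is the numerics. You guess that the $n$-dependence enters through the conormal twist $\ms O_\ell(n-1)^{\oplus 2}$ from Lemma \ref{lem:surj-structure-morph}, but you never identify the actual line bundle on $\ell$ whose cohomology must vanish, so the threshold $n>4$ is never derived --- you explicitly leave this ``bookkeeping'' for later, and it is the crux. The paper finishes by a local calculation showing that $\Omega^1_{R/B}$ is annihilated by the ideal of $\ell$, hence equals $\Omega^1_{\ell/k}\cong\ms O_\ell(K_\ell)$, which gives $\Ext^2_R(L_{R/B},\ms O_R)\cong\H^1(\ell,\ms O_\ell(K_\ell+K_R))^\vee\cong\H^0(\ell,\ms O_\ell(-K_R))$, and then by evaluating $\deg K_R|_\ell$ via its formula for $K_R$ in terms of $\pi^\ast K_{\bP^3}$ and the exceptional divisors; this is where $n>4$ appears. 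Your conormal data is equivalent to this by adjunction ($\det N_{\ell/R}\cong\omega_\ell\tensor\omega_R^{-1}|_\ell$), so your plan could in principle be completed along those lines, but as written neither the identification $\Omega^1_{R/B}\cong\Omega^1_\ell$ nor any degree computation is present. In short: right skeleton, but the two ideas that make the proof work --- that only $\Omega^1_{R/B}$ can contribute to $\Ext^2$, and the explicit degree count on $\ell$ --- are missing.
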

\begin{proof} Consider the standard spectral sequence
        \begin{equation}\label{eq:E2} E_2^{pq}=\Ext^p(\ms
H^{-q}(L_{R/B}, \ms O_R))\Rightarrow\Ext^{p+q}(L_{R/B},\ms O_R).
        \end{equation} We know that $\ms H^0(L_{R/B})=\Omega^1_{R/B}$,
and that $\ms H^{-j}(L_{R/B})$ is supported on $\ell$ for all $j\geq
0$. By Serre duality, we can compute the terms in the spectral
sequence as
        $$\Ext^p(\ms H^{-q}(L_{R/B}),\ms O_R)=\H^{3-p}(R,\ms
        H^{-q}(L_{R/B})(K_R))^\vee.$$ Since the cohomology sheaves of
$L_{R/B}$ are all supported on $\ell$, all columns of the $E^2_{pq}$
page \eqref{eq:E2} vanish except (possibly) for $p=2,3$. It follows
that
        $$\Ext^2_{R}(L_{R/B},\ms O_R)\cong\H^1(R,\Omega^1_{R/B}(K_R))^\vee.$$
        
        A local calculation shows that $\Omega^1_{R/B}$ is annihilated
by the ideal of $\ell$, so that $\Omega^1_{R/B}=\Omega^1_{\ell/\Spec
k}$, and thus
        $$\H^1(R,\Omega^1_{R/B}(K_R))^\vee\cong\H^1(\ell, \ms
        O_{\ell}(K_\ell+K_R))^\vee\cong\H^0(\ell, \ms
O_\ell(-K_R))=\H^0(\ell, \ms O(4-2n))=0,$$ as desired.
\end{proof}

\begin{prop}\label{prop:def-coll} Suppose $n>2$. If
$X\subset(\bP^2)^n_A$ is an $A$-flat deformation of $\Scheme(\bPhi^0)$
then there is a deformation $\bPhi$ of $\bPhi^0$ such that
$\Scheme(\bPhi)=X$ as closed subschemes of $(\bP^2)^n$. Moreover,
$\bPhi$ is unique up to unique isomorphism of deformations of
$\bPhi^0$ over $A$.
\end{prop}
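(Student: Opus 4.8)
The plan is to mimic the non-collinear argument of Proposition \ref{sec:lift-deform-mult-1}, inserting one preliminary step that repairs the single place where that argument fails. In the non-collinear case the resolution $\rho\colon\Res(\bPhi^0)\to\Scheme(\bPhi^0)$ is an isomorphism, so the given deformation $X$ of $\Scheme(\bPhi^0)$ \emph{is} already a deformation of the blowup $\Res(\bPhi^0)$, and essential rigidity (Proposition \ref{sec:essent-rigid-blow-1}) applies to $X$ directly. In the collinear case $\rho$ contracts the line $\ell$, so $X$ is the (singular) multiview variety and is not itself a blowup of $\bP^3$; essential rigidity cannot be applied to it. The idea is therefore first to lift the contraction $\rho$ to a morphism $\widetilde\rho\colon\widetilde R\to X$ from an $A$-flat deformation $\widetilde R$ of $\Res(\bPhi^0)$, and only then to recognize $\widetilde R$ as a blowup and descend the configuration.

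First I would lift $\rho$. Writing $R=\Res(\bPhi_k)$ and $B=\Scheme(\bPhi_k)$ for the special fibers, the problem of producing an $A$-flat deformation $\widetilde R$ of $\Res(\bPhi^0)$ together with a morphism $\widetilde\rho\colon\widetilde R\to X$ lifting $\rho$, with target pinned to the prescribed deformation $X$, is governed by the relative cotangent complex $L_{R/B}$ of the contraction: the obstruction to such a lift lies in $\Ext^2_R(L_{R/B},\sO_R)\tensor_k I$. Because $\mf m I=0$ the ideal $I$ is a $k$-vector space, so the obstruction and deformation groups over the $A_0$-scheme $\Res(\bPhi^0)$ reduce, by flat base change for the cotangent complex, to the $\Ext$-groups computed on the special fiber. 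By Lemma \ref{lem:cotangent-vanishing} the group $\Ext^2_R(L_{R/B},\sO_R)$ vanishes for $n>4$, so the obstruction vanishes and a lift $\widetilde\rho\colon\widetilde R\to X$ exists.

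Next I would identify $\widetilde R$ and build the configuration. Since $\widetilde R$ is an $A$-flat deformation of $\Res(\bPhi^0)=\Bl_{Z_0}\bP^3_{A_0}$, Proposition \ref{sec:essent-rigid-blow-1} shows it is canonically the blowup $\Bl_Z\bP^3_A$ at a union $Z$ of $n$ disjoint sections deforming the camera centers, equipped with a blow-down $\beta\colon\widetilde R\to\bP^3_A$. Composing $\widetilde\rho$ with $\beta^{-1}$ over the complement of $Z$ yields a rational map $\bPhi\colon\bP^3_A\dashrightarrow(\bP^2_A)^n$ deforming $\bPhi^0$, and that each component $\pr_i\circ\bPhi$ is a relative pinhole camera is checked exactly as in Proposition \ref{sec:lift-deform-mult-1}: by Nakayama each exceptional divisor maps isomorphically onto $\bP^2_A$, and the universal property of the blowup identifies the cokernel of the relevant map of sheaves with an invertible sheaf supported on the section $Z_i$. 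By the uniqueness clause of Proposition \ref{prop:resolution-of-config-rel}, $\widetilde\rho$ is the canonical resolution morphism of $\bPhi$, so $\widetilde R=\Res(\bPhi)$ and $\Scheme(\bPhi)$ is the scheme-theoretic image of $\widetilde\rho$, hence a closed subscheme of $X$.

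Finally I would prove $\Scheme(\bPhi)=X$ and conclude uniqueness. As in Lemma \ref{lem:surj-structure-morph}, the map $\sO_X\to\widetilde\rho_\ast\sO_{\widetilde R}$ is an isomorphism on the special fiber, where it is $\sO_B\simto\rho_\ast\sO_R$; comparing the two $A$-flat closed subschemes $\Scheme(\bPhi)\subset X$ with equal special fibers and applying Nakayama to the kernel of $\sO_X\twoheadrightarrow\sO_{\Scheme(\bPhi)}$ forces $\Scheme(\bPhi)=X$ (this is the same flatness-plus-base-change mechanism as in Proposition \ref{base-change-image}). Uniqueness up to unique isomorphism of deformations then follows immediately from Proposition \ref{prop:iso-iso}, just as in the non-collinear case. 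The step I expect to be the crux is the first one: making precise that $\Ext^2_R(L_{R/B},\sO_R)$ really is the obstruction to lifting the contraction with its source deforming and the target fixed to $X$, and justifying the reduction to the special fiber. The honest subtlety is that the \emph{a priori} governing complex for a map into the fixed target is $L_{R/X}$ rather than $L_{R/B}$; reconciling the two requires the transitivity triangle for $R\to B\hookrightarrow X$ together with the auxiliary vanishing $\H^1(\Res(\bPhi_k),\sO)=\H^1(\bP^3,\sO)=0$, which kills the correction term and lets Lemma \ref{lem:cotangent-vanishing} control the genuine obstruction.
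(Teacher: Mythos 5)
Your proposal is correct and takes essentially the same route as the paper's (much terser) proof, which likewise deforms the contraction $\Res(\bPhi^0)\to\Scheme(\bPhi^0)$ with target prescribed to be $X$ using Lemma \ref{lem:cotangent-vanishing}, then applies the essential-rigidity results of Section \ref{sec:rigid} to recognize the deformed source as a blowup arising from a configuration $\bPhi$, and finally invokes Proposition \ref{prop:iso-iso} for uniqueness; your write-up simply fills in the details (the pinhole-camera check, the Nakayama argument that $\Scheme(\bPhi)=X$) that the paper leaves implicit. The ``crux'' you flag at the end is handled in the paper by citing \cite[Proposition III.2.2.4]{MR0491680} directly: for the mixed problem in which the source and the morphism deform while the target's deformation is fixed, the obstruction lives exactly in $\Ext^2_R(L_{R/B},\ms O_R)\otimes_k I$ (the cotangent complex of the \emph{original} morphism, reduced to the special fiber since $\mf m I=0$), so no transitivity-triangle correction involving $L_{R/X}$ is actually needed.
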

\begin{proof} By \cref{lem:cotangent-vanishing} and \cite[ 
III.2.2.4]{MR0491680}, the obstruction
to deforming the morphism $$\Res(\Phi^0)\to\Scheme(\Phi^0)$$ over $A$ vanishes,
resulting in a deformation $R\to X$. Applying the results of 
\cref{sec:rigid}, we see that this arises from a deformation $\Phi$, as
desired. The uniqueness of $\Phi$ up to isomorphism is an immediate
consequence of \cref{prop:iso-iso}.
\end{proof}

\subsection{Diagram Hilbert schemes}
\label{sec:hilbert}

In this section, we briefly explain a basic idea that is hard to find in the 
literature: diagram $\Hom$-schemes and diagram Hilbert schemes. They are a mild 
elaboration of the idea of a flag Hilbert scheme. By not only remembering the 
data of the image but also the calibrating conics the moduli of calibrated 
cameras maps to a diagram Hilbert schemes in the same way that the moduli of 
uncalibrated cameras maps to a Hilbert scheme. 

\subsubsection{Definition and examples}
\label{sec:definition-examples}

Fix a base scheme $S$, a category $I$, and a functor 
$\underline{X}:I\to\AlgSp_S$, where $\AlgSp_S$ denotes the category of algebraic 
spaces over $S$.

\begin{defn}\label{defn:diag-hilb}
  The \emph{diagram Hilbert functor\/} $$\Hilb_{\underline 
X}:\Sch_S^\circ\to\Sets$$ is
  the functor whose value on an $S$-scheme $T$ is the set of
  isomorphism classes of natural transformations $\underline
  Y\to\underline X\times_S T$ of functors $I\to\Sch_T$ where for each
  $i\in I$ the associated arrow $\underline Y(i)\to\underline
  X(i)\times_S T$ is a $T$-flat family of proper closed subschemes of
  $\underline X(i)$ of finite presentation over $T$.
\end{defn}

\begin{example}
  The usual Hilbert scheme is an example: just take $I$ to be the
  singleton category. So is the flag Hilbert scheme of length $n$: in
  this case the category $I$ is the category $\underline n$ associated
  to the poset $\{1,\ldots,n\}$, and the functor $\underline X$ is the
  constant functor $X\to X$. A natural transformation $\underline
  Y\to\underline X$ defines a nested sequence of closed subschemes of
  $X$. This is the flag Hilbert scheme (of length 2 flags).

 There is also a stricter kind of flag scheme: suppose $X_1\subset
 X_2$ is a closed immersion and one wants to parameterize pairs
 $Y_i\subset X_i$ such that $Y_1\subset Y_2$. That is precisely the
 diagram Hilbert functor associated to the poset-category $\underline 2=\{1<2\}$
 with the functor $\underline 2\to\Sch_S$ sending $i$ to $X_i$. This
 last example is the one that will arise naturally for us in the
 context of calibrated cameras. (We record more general results here
 in case someone in the future needs this general idea of diagram
 Hilbert scheme.)
\end{example}

\begin{notn}
  If the diagram in question is a single morphism $X\to Y$, we will
  write
$\Hilb_{X\to Y}$ for the associated Hilbert functor.
\end{notn}

\subsubsection{Representability}
\label{sec:representability}

The main result about diagram Hilbert functors is that they are
representable. We prove this in a high degree of generality, in case this is of independent interest.

\begin{prop}\label{P:diag-hilb-repr}
  Let $I$ be a finite category and $\underline X:I\to\AlgSp_S$ a functor
  whose components are separated algebraic spaces. Then the diagram Hilbert
  functor $\Hilb_{\underline X}$ is representable by an algebraic
  space locally of finite presentation over $S$. If the $\underline
  X(i)$ are locally quasi-projective schemes then $\Hilb_{\underline
    X}$ is represented by a locally quasi-projective $S$-scheme.
\end{prop}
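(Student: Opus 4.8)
The plan is to prove representability of the diagram Hilbert functor by reducing it to the representability of ordinary Hilbert functors and $\Hom$-functors, which are classical. The key observation is that a diagram Hilbert functor is built from the Hilbert functors of the individual objects $\underline{X}(i)$ together with compatibility conditions imposed by the morphisms of $I$; each such condition is closed (or at least locally closed) in a product, so representability should propagate through a finite fiber-product and locally-closed-subfunctor construction.

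Concretely, I would first treat the product $\prod_{i\in I}\Hilb_{\underline{X}(i)}$, which is representable by a product of the classical Hilbert schemes (using Grothendieck's representability, e.g. for quasi-projective components, or the algebraic-space version of Artin for general separated algebraic spaces). A point of this product is a tuple $(Y_i\subset \underline{X}(i)\times_S T)$ of flat families with no compatibility. The diagram Hilbert functor $\Hilb_{\underline{X}}$ is then the subfunctor cut out by requiring, for each morphism $f\colon i\to j$ in $I$, that the induced map $\underline{X}(f)\colon \underline{X}(i)\to \underline{X}(j)$ carries $Y_i$ into $Y_j$, i.e.\ that the composite $Y_i\hookrightarrow \underline{X}(i)\times_S T\to \underline{X}(j)\times_S T$ factors through the closed subscheme $Y_j$. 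Since $I$ is finite, there are finitely many morphisms (or finitely many generating morphisms) to impose, so I would handle one morphism at a time and intersect.

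The heart of the argument is showing that each such factorization condition defines a locally closed (in fact, I expect closed, since the $Y_i$ are proper over $T$) subfunctor of the product. For a single morphism $f\colon i\to j$, the condition ``$\underline{X}(f)$ maps $Y_i$ into $Y_j$'' is precisely the condition that a certain map of $T$-schemes factors through a closed subscheme; equivalently, it is governed by a relative $\Hom$-functor, $\Hom_T(Y_i, Y_j)$ over $\Hom_T(Y_i,\underline{X}(j))$, lying over the product of Hilbert schemes. Representability of Hom-schemes between proper flat families (Grothendieck) combined with the fact that the locus where a section factors through a closed subscheme is representable by a closed condition (using properness of $Y_i\to T$ to push forward and apply the valuative/flattening criterion) gives that this cut-out subfunctor is representable by a locally closed, indeed closed, immersion into the product. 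I would then intersect over the finitely many morphisms of $I$.

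The main obstacle I anticipate is the precise verification that the factorization locus is representable by a \emph{closed} (or locally closed) subscheme rather than merely a constructible subset, and doing so in the generality of separated algebraic spaces rather than quasi-projective schemes. Here properness of the families $Y_i\to T$ is essential: it lets one form $\underline{X}(f)_*(\text{ideal of }Y_j)$ and express the factorization as the vanishing of a coherent obstruction, which is a closed condition by semicontinuity and the theory of flattening stratifications. The quasi-projective refinement in the second sentence of the proposition then follows because a locally closed subscheme of a product of quasi-projective Hilbert schemes is again locally quasi-projective. I would close the proof by noting that finiteness of $I$ guarantees the construction terminates after finitely many intersections, preserving local finite presentation over $S$.
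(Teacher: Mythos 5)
Your proposal is correct, and it organizes the proof differently from the paper, in a way that is arguably leaner. Both arguments begin identically, by reducing to the representable product $\prod_{i\in I}\Hilb_{\underline X(i)}$. The paper then keeps the connecting morphisms of the diagram as extra \emph{data}: it forms the auxiliary space $H=\prod_{a\in A}\Hom(Z(s(a)),Z(t(a)))$ fibered over $\prod_{i\in I}\Hilb_{\underline X(i)}$, invokes the standard theory of $\Hom$-schemes for its representability, and then cuts out the closed locus where the composition relations $b\circ a=c$ hold, closedness coming from separatedness of the $Z(i)$. Your key observation is that this morphism data is redundant: since each $Y_j\hookrightarrow\underline X(j)\times_S T$ is a monomorphism, the connecting maps $\underline Y(a)$ are unique when they exist and functoriality is automatic, so $\Hilb_{\underline X}$ is literally a subfunctor of the product, cut out by one incidence condition per arrow (``$\underline X(a)$ carries $Y_{s(a)}$ into $Y_{t(a)}$''). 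This buys economy --- no auxiliary moduli of morphisms, no composition relations to impose --- and it also quietly supplies a point the paper's write-up elides: the paper only imposes the composition relations on $H$ and never explicitly imposes the naturality squares tying the chosen morphisms to $\underline X(a)$, which are needed and are closed by the same separatedness argument. What the paper's route buys in exchange is that its closed conditions are bare equalizer conditions for maps into separated spaces, closed essentially by definition; your route instead rests on the standard but slightly more delicate flag-Hilbert-scheme fact that the locus where a map from a proper flat family factors through a closed subscheme is \emph{closed} in the base. Your sketch of that fact (push forward the ideal-sheaf obstruction, use properness and flattening stratifications, or twist and compare pushforwards in the quasi-projective case) is the standard argument, and the locus is indeed closed rather than merely locally closed, so your hedge can be dropped.
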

\begin{proof}
  There is a natural functor
$$F:\Hilb_{\underline X}\to\prod_{i\in I}\Hilb_{\underline X(i)},$$
and we know that the latter is representable by algebraic spaces
(resp.\ schemes) satisfying the desired conditions. It thus suffices
to show the same for $F$, i.e., that $F$ is representable by spaces of
the required type.

For each $i\in I$, let $$Z_i\subset\underline
X(i)\times\prod\Hilb_{\underline X(i)}$$ denote the universal closed
subscheme (pulled back over the product). Let $A$ denote the set of
arrows in $I$; for an arrow $a\in A$, let $s(a)$ and $t(a)$ denote the
source and target of $a$. Consider the scheme
$$H:=\prod_{a\in A}\Hom_{\Hilb_{\prod\underline X(i)}}(Z(s(a)),
Z(t(a))),$$
which naturally fibers over $\prod\Hilb_{\underline X(i)}$. The
standard theory of $\Hom$-schemes shows that
$H\to\prod\Hilb_{\underline X(i)}$ is representable by spaces of the
desired type.

The final observation to make is that composition of two arrows gives
equations $b\circ a=c$ in $A$, and these translate into
\emph{closed\/} conditions on $H$ because all of the subschemes $Z(i)$
are separated. Since the conditions desired are stable under taking
closed subspaces, we have proven the result.
\end{proof}

\subsection{Moduli of calibrated camera configurations}
\label{sec:mod-cal}

Let $\ms C$ denote the space of smooth conics in
$\bP^2_{\Spec\Z[1/2]}$, and let $C_\univ\subset\bP^2_{\ms C}$ denote the
universal smooth conic. (The space $\ms C$ is an open subscheme of the
bundle of sections of $\ms O_{\bP^2_{\Spec\Z[1/2]}}(2)$.) The tuple of
conics $(C_\univ,\ldots,C_\univ)$ inside $(\bP^2)^n$ will be called
the \emph{universal calibration\/}.

\begin{defn}
  Given a positive integer $n$, the \emph{sheaf of calibrated camera
    configurations of length $n$\/}, denoted $\CalCam_n$, is the sheaf over the
    Cartesian power $\ms C^n$ whose value over a point $t:S\to\ms C^n$ consists
    of the set of isomorphism classes of general relative calibrated multiview
    configurations of length $n$ with calibration datum of the form
    $(C,t^\ast(C_\univ,\ldots,C_\univ))$.
\end{defn}

In down-to-earth terms, we are just describing the space of $n$-tuples
of calibrated cameras with pairwise non-intersecting centers, together
with \emph{arbitrary but specified\/} calibration data. In the
existing literature, the word ``calibrated'' usually means that one
has fixed the calibrating conics to be the canonical absolute conic in space
(attached to the Euclidean distance form on $\bP^3$) and the circle in
the plane. Since any two smooth conics are conjugate under a
homography, this seems harmless. As we hope to describe in this
section, thinking more geometrically and \emph{tracking the conics as
data instead of normalizing them\/} gives us a great deal of insight
into the underlying moduli problem. The point of the universal conic
in $\bP^2$ is that we only want to allow the conic in $\bP^3$ to vary;
that is, we fix calibration data on the image planes when we
define the moduli problem. By working with the universal conic, we
allow those fixed planar data to be arbitrary.

\begin{notn}
  Since we are fixing the calibration data on the image planes to be
  the universal conic, we will omit them from the notation for a
  calibration datum. Thus, we will write $(\bPhi,C)$ for a calibrated
  configuration. When we need to refer to the image plane calibrating
  curves, we will use $C_i$ for the curve in the $i$th plane, it is
  key to remember that while $C_i$ can vary as the base varies
  (depending upon how it maps to $\ms C^n$), this is determined solely
  by the base and not by the object of $\CalCam_n$ over that point of
  the base.
\end{notn}

The main result of this section is the following.
\begin{prop}\label{P:cal-cam-yummy}
  The sheaf $\CalCam_n$ is a smooth scheme of finite type
  over $\ms C^n$.
\end{prop}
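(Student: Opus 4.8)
The plan is to separate the two assertions --- algebraicity with finite type, and smoothness --- and to attack them by quite different means. I would dispatch algebraicity by a Hilbert-scheme argument, and prove smoothness by an explicit fibration on the non-degenerate locus, leaving the degenerate boundary (the real difficulty) to a deformation-theoretic computation.

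First I would realize $\CalCam_n$ as a locally closed subscheme of a relative Hilbert scheme. Over $\Cam_n\times_{\bZ}\ms C^n$, which is of finite type over $\ms C^n$ since $\Cam_n$ is of finite type over $\bZ$, we have the universal configuration $\bPhi$ and the universal image conics $C_1,\dots,C_n$ pulled back from $\ms C^n$. Form the relative Hilbert scheme $\ms H$ of conics in the universal $\bP^3$; it is projective and of finite type over $\Cam_n\times\ms C^n$. A calibrated configuration $(\bPhi,C)$ is exactly a point of $\ms H$ at which the tautological conic $C$ is a calibration datum: $\bPhi$ is regular along $C$ (an open condition, saying $C$ avoids the camera centres), and each $\pr_i\circ\bPhi$ carries $C$ into $C_i$. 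The latter is the vanishing of the pullback of the defining section of $C_i$ along $(\pr_i\circ\bPhi)|_C$, hence closed. Because the configurations have trivial automorphisms (as for $\Cam_n$, whose trivial stabilizers are used in its presentation $[\M^n/\GL_4]$), the resulting map is a locally closed immersion, identifying $\CalCam_n$ with a locally closed subscheme of $\ms H$; in particular it is an algebraic space of finite type over $\ms C^n$.

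For smoothness over the non-degenerate locus I would make the ``fibration structure'' of Section \ref{sec:meth-contr} precise. Let $\ms K$ denote the smooth $8$-dimensional space of smooth conics in $\bP^3$, an open subscheme of a $\bP^5$-bundle over $(\bP^3)^\vee$. I claim the non-degenerate part of $\CalCam_n$ is the $\operatorname{PGL}_4$-quotient of the following tower over $\ms K\times\ms C^n$: over a conic $C$ spanning a plane $H$ one records, for each $i$, a camera centre $o_i\in\bP^3\setminus H$ (the $o_i$ distinct) and an isomorphism $f_i\colon C\xrightarrow{\sim}C_i$. Indeed the $i$th camera is recovered as $g_i\circ\pi_{o_i,H}$, where $\pi_{o_i,H}\colon\bP^3\dashrightarrow H$ is linear projection from $o_i$ and $g_i\colon H\xrightarrow{\sim}\bP^2$ is the unique linear isomorphism extending $f_i$ and carrying $C$ to $C_i$ (unique because any four points of a smooth conic are in general position); conversely $(\bPhi,C)\mapsto\bigl(C,(o_i),((\pr_i\circ\bPhi)|_C)\bigr)$ inverts this. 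Each layer of the tower is a smooth fibration: $\ms K$ is smooth, the centres range over the smooth open locus of distinct points of $(\bP^3\setminus H)^n$, and the isomorphisms form a torsor under $\operatorname{Aut}(C)^n\cong\operatorname{PGL}_2^{\,n}$. Since $\operatorname{PGL}_4$ acts freely (again the trivial-stabilizer fact), the quotient is smooth over $\ms C^n$.

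The main obstacle is the degenerate boundary, where $C$ degenerates to a double ruling of the cone over $C_i$ and the presentation above breaks down (the restriction isomorphisms $f_i$ degenerate). Here I would switch to the infinitesimal criterion, in the spirit of Section \ref{sec:rigid}. Given a square-zero extension $A\to A_0$, a compatible lift of the image conics, and a calibrated configuration $(\bPhi^0,C^0)$ over $A_0$, one lifts $\bPhi^0$ using the smoothness of $\Cam_n$, and the essential rigidity of Proposition \ref{sec:essent-rigid-blow-1} identifies deformations of $\bPhi^0$ with deformations of its multiview scheme; the only genuine obstruction is then to deforming $C^0$ compatibly with the calibration equations. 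That obstruction lives in an $\H^1$ on the calibration curve of an appropriate subsheaf of its normal sheaf. The crux is that on a conic --- smooth, or its non-reduced double-line degeneration --- this reduces, by the filtration argument and the Serre-duality manipulation of Lemma \ref{lem:cotangent-vanishing}, to the cohomology of non-negative-degree line bundles on $\bP^1$, which vanishes. Because only double rulings, and not unions of two distinct rulings, arise as limits of smooth calibration data, this single vanishing covers the whole boundary, and the lifts form a torsor under the corresponding $\H^0$, giving formal smoothness along the degenerate locus as well.
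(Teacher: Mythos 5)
Your first two steps are essentially correct and genuinely different from the paper's route: the paper does not embed $\CalCam_n$ in a Hilbert scheme of conics or build an explicit $\operatorname{PGL}_4$-quotient presentation; instead it inducts on the number of cameras via the forgetful map $\tau_n:\CalCam_n\to\CalCam_{n-1}\times_{\ms C^{n-1}}\ms C^n$, showing $\tau_n$ is representable, separated, of finite presentation, and formally smooth. The decisive difference is that the paper's induction is arranged precisely so that the problem you defer to the boundary never arises, whereas your step (3) --- which you rightly flag as the real difficulty --- contains a genuine gap.

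The gap is the order of deformation. You propose to lift $\bPhi^0$ first, ``using the smoothness of $\Cam_n$,'' and then deform $C^0$ compatibly, claiming the obstruction to the latter vanishes by an $\H^1$-computation on the calibration curve. This decoupling cannot work, and not only at degenerate points. For $n\geq 2$ the morphism $\nu_n:\CalCam_n\to\Cam_n\times\ms C^n$ is unramified with image of positive codimension: for $n=2$ the relative dimensions over $\ms C^2$ are $5$ versus $7$ (the image is the essential-variety locus). Since $\Cam_n$ is smooth, every first-order deformation of the cameras (with the image conics held fixed) is realized; but a deformation of $C^0$ compatible with it exists exactly when that tangent vector lies in the image of $d\nu_n$, which is a proper subspace. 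So for a \emph{generic} lift of $\bPhi^0$ there is no lift of $C^0$ at all: the obstruction you want to kill is genuinely nonzero, and no vanishing of cohomology of non-negative-degree bundles on $\bP^1$ (smooth conic or double line) can prove otherwise --- the deformations of the cameras and of the space conic are coupled, and your argument never confronts the coupling. (Your appeal to essential rigidity is also off to the side here, since it requires non-collinearity or $n>4$, neither of which is available at, say, a degenerate point of $\CalCam_2$.)

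The paper resolves this by deforming in the opposite order. By induction, the space conic $C$ together with the first $n-1$ cameras deform (smoothness of $\CalCam_{n-1}$ over $\ms C^{n-1}$); then the last camera alone is deformed so as to carry the already-deformed conic $C_A$ into the prescribed lift of $C_n$. That last step is an unobstructed \emph{linear} problem --- lifting three sections of $\ms O_{\bP^3}(1)$ with prescribed restrictions to $C_A$ --- solved by a diagram chase using cohomology vanishing on $\bP^3$ and on the plane spanned by $C_A$, and it works verbatim whether $C_A$ is smooth or a double line, so no separate boundary analysis is needed. If you want to keep your architecture, you must either prove surjectivity of the coupled tangent map (i.e.\ deform the pair $(\bPhi,C)$ jointly, which is essentially re-proving the statement), or reverse the order as the paper does: conic first, cameras second.
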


Let $\tau_n : \CalCam_n \to \CalCam_{n-1}\times_{\ms C^{n-1}}\ms C^n$
be the morphism given by forgetting the last camera (and retaining the
last calibrating plane conic).

\begin{lem}\label{L:calcam-forget-repble}
  The morphism $\tau_n$ is representable by separated schemes of
  finite presentation. 
\end{lem}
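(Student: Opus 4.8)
The plan is to verify representability by working over an arbitrary test scheme and reducing $\tau_n$ to the problem of adjoining a single calibrated camera. Fix a morphism $T\to\CalCam_{n-1}\times_{\ms C^{n-1}}\ms C^n$ from a scheme $T$. By definition of the target, this furnishes over $T$ a form $q\colon\bP\to T$ of $\bP^3$, a length-$(n-1)$ calibrated multiview configuration $(\bPhi_1,\ldots,\bPhi_{n-1})$ with space conic $C\subset\bP$, together with the \emph{full} tuple of image-plane conics $C_1,\ldots,C_n$ (the last one supplied by the extra $\ms C^n$ factor, the first $n-1$ forced to agree with those of the $\CalCam_{n-1}$-point by the fiber product over $\ms C^{n-1}$). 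The fiber $\CalCam_n\times_{\mathrm{target}}T$ then parametrizes, over a $T$-scheme $T'$, the single additional relative pinhole camera $\bPhi_n\colon\bP_{T'}\dashrightarrow\bP^2_{T'}$ whose center is disjoint from $Z_1,\ldots,Z_{n-1}$ and for which $(C,C_n)$ is a calibration datum. I will produce a scheme representing this functor.

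First I would build the ambient parameter space and impose the camera and calibration conditions. By Definition~\ref{defn:relative-pinhole-cam} a relative pinhole camera with domain $\bP$ is three sections of $\ms O_\bP(1)$ up to common scaling, i.e.\ a point of the projective bundle $\bP\bigl((q_\ast\ms O_\bP(1))^{\oplus 3}\bigr)\to T$; here $q_\ast\ms O_\bP(1)$ is locally free of rank $4$ even when $\bP$ is only a form of $\bP^3$, so this bundle is projective, hence separated and of finite presentation over $T$. The locus where the three sections are fiberwise independent with base scheme a single section—so that the cokernel is invertible and supported on a section, yielding a genuine camera—is open, as are the conditions that this center avoid $Z_1,\ldots,Z_{n-1}$ and $C$ (the latter being regularity of $\bPhi_n$ along $C$). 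Once $\bPhi_n$ is regular along $C$, the restriction $\bPhi_n|_C\colon C\to\bP^2$ is a morphism, and requiring it to factor through $C_n=\{Q_n=0\}$ is the vanishing of the pulled-back quadric $\bPhi_n^\ast Q_n|_C$, a section of a line bundle on $C$, hence a \emph{closed} condition. (For smooth $C$, as noted after Definition~\ref{defn:calibration-datum}, factoring through $C_n$ automatically upgrades to an isomorphism onto $C_n$, so nothing further is needed.) Intersecting the open conditions with this closed one exhibits the fiber functor as a locally closed subscheme of the projective bundle, hence as a separated scheme of finite presentation over $T$.

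Next I would check that this scheme genuinely represents the stack-theoretic fiber, i.e.\ that $\tau_n$ has trivial relative inertia. An automorphism of an object of $\CalCam_n$ lying over the identity of the target is by definition an automorphism $\eps$ of the \emph{whole} length-$n$ calibrated configuration—it fixes all $n$ cameras and $C$—and since such an $\eps$ then covers the identity on the first $n-1$ cameras automatically, the relative inertia coincides with $\operatorname{Aut}(\bPhi_1,\ldots,\bPhi_n)$. For $n\ge 2$ this group is trivial by Proposition~\ref{prop:iso-iso}: the automorphism fixing the multiview scheme is unique, and the identity already works. Thus the fiber groupoid is discrete and is represented by the scheme built above.

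I expect the main obstacle to be bookkeeping rather than geometry: correctly unwinding the fiber product over $\CalCam_{n-1}\times_{\ms C^{n-1}}\ms C^n$ so that precisely the $n$th camera remains free while every conic (including $C_n$) is retained as fixed data, and confirming that the projective-bundle model together with the openness of the camera conditions and the closedness of the calibration condition all remain valid for an arbitrary form $\bP$ of $\bP^3$ over $T$ (so that one may, if needed, pass to an fppf cover trivializing $\bP$). The one conceptual point—that no stacky automorphisms survive—reduces cleanly to Proposition~\ref{prop:iso-iso} and requires no separate argument.
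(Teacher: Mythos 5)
Your proof is correct, but it constructs the representing scheme by a genuinely different route than the paper. The paper first factors $\tau_n$ through the camera center: the centers satisfying the two disjointness conditions form an open subscheme $\bP^\circ\subset\bP$, and after a further base change one may assume the center is a fixed section $\sigma:T\to\bP$; it then blows up $\sigma(T)$ and realizes the remaining freedom as the open, finite-type locus of the $\Hom$-scheme $\Hom(\widetilde{\bP},\bP^2)$ consisting of maps $f$ with $f^\ast\ms O_{\bP^2}(1)\cong\ms O(1)(-E)$ fiberwise, finally imposing the calibration condition as a closed condition, exactly as you do. You skip the center factorization and the blowup entirely, parametrizing the new camera by the projective bundle $\bP\bigl((q_\ast\ms O_{\bP}(1))^{\oplus 3}\bigr)$ of triples of sections up to scaling, with the camera and generality conditions open (the center then appears for free as the degeneracy section) and calibration closed. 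Your route is more elementary and self-contained, and quasi-projectivity of the locally closed locus gives separatedness and finite presentation immediately; the paper's route stays closer to the blowup description of configurations (Corollary \ref{cor:rel-multi-char}) and reuses the $\Hom$-scheme technology it needs anyway for diagram Hilbert schemes (Proposition \ref{P:diag-hilb-repr}).

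Two small points to repair, neither fatal. First, your identification of the relative inertia is off: an automorphism $g$ of a length-$n$ object and its image $\tau_n(g)$ have the \emph{same} underlying automorphism $\eps$ of $\bP$, so the fiber-product compatibility ``$\tau_n(g)=\mathrm{id}$'' forces $g=\mathrm{id}$ outright. The relative inertia is the kernel of $\operatorname{Aut}(y)\to\operatorname{Aut}(\tau_n(y))$, not all of $\operatorname{Aut}(y)$, and it is trivial for this purely formal reason; your appeal to Proposition \ref{prop:iso-iso} is redundant (and proving triviality of the full automorphism group over an arbitrary, possibly non-reduced, base would require more care than that proposition supplies). Second, to handle nontrivial forms $\bP$ you should localize Zariski-locally rather than fppf-locally: the paper's forms are Zariski forms (they carry sections), so $\ms O_{\bP}(1)$ exists after Zariski localization and the representing schemes glue over a Zariski cover, whereas fppf descent of schemes is genuinely delicate.
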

\begin{proof}
  Let $((\phi_1,\ldots,\phi_{n-1},C), C_n)$ be a $T$-valued point of
  $\CalCam_{n-1}\times_{\ms C^{n-1}}\ms C^n$. The fiber of $\tau_n$
  is given by the set of cameras $\phi_n$ with the same domain $\bP\to T$ as the
  first $n-1$ cameras, with the following additonal properties.
  \begin{enumerate}[leftmargin=1.5cm]
  \item The center of $\phi_n$ avoids the centers of $\phi_i$ for
    $i=1,\ldots,n-1$.
  \item The restriction $\phi_n|C$ factors through the closed
    subscheme $C_n\subset\bP$.
  \end{enumerate}
  The space of camera centers satisfying the first condition is an
  open subscheme $\bP^\circ\subset\bP$, and taking the center gives a
  natural map $$\CalCam_n\to\bP^\circ\times\CalCam_{n-1}\times_{\ms
    C^{n-1}}\ms C^n.$$ It
  suffices to show that this map is representable, and thus we may
  assume that the
  center is a given section $\sigma:T\to\bP$. Blowing up along $\sigma(T)$ to 
yield
  $\widetilde{\bP}$, with exceptional divisor $E$, we can then realize
  the cameras inside the open locus of the 
  $\Hom$-scheme $\Hom(\widetilde{\bP},\bP^2)$ parametrizing maps 
$f:\widetilde{\bP}\to\bP^2$ for which $f^\ast\ms
  O_{\bP^2}(1)$ is isomorphic to $\ms O(1)(-E)$ on each geometric
  fiber over $T$. This locus is of
  finite type. Finally, the condition that $C$ lands in $C_n$ is
  closed (and of finite presentation), completing the proof.
\end{proof}

\begin{prop}\label{sec:moduli-calibr-camera}
  The morphism $\tau_n$ is smooth.
\end{prop}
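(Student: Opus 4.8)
The plan is to verify the infinitesimal criterion for smoothness, which suffices because $\tau_n$ is representable by separated schemes of finite presentation by Lemma \ref{L:calcam-forget-repble}. Fix a square-zero extension $A\to A_0$ with kernel $I$, and suppose we are given all the data of a $T=\Spec A$-point of $\CalCam_{n-1}\times_{\ms C^{n-1}}\ms C^n$ together with a lift to a $\Spec A_0$-point of $\CalCam_n$. Over $A$ we thus have a fixed source $\bP$, the calibrating curve $C\subset\bP$, the first $n-1$ cameras, and the image conic $C_n\subset\bP^2_A$ cut out by a nondegenerate symmetric form $Q_n$; over $A_0$ we have in addition the last camera $\phi_n^0$. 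The task is to lift $\phi_n^0$ to a calibrated camera $\phi_n$ over $A$.

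Following the fiber description in the proof of Lemma \ref{L:calcam-forget-repble}, the choice of $\phi_n$ (with its center constraints) is a point of an open subscheme of the space of linear projections $\bP\dashrightarrow\bP^2$ with target $(\bP^2,C_n)$; writing $V=\Gamma(\bP,\ms O_\bP(1))$ and $U=\Gamma(\bP^2,\ms O(1))$, this is an open subscheme of $\bP(\Hom(U^\vee,V))$, smooth over the base, and the center conditions are open. Consequently the underlying projection $\phi_n^0$ always lifts to $A$; the only obstruction is to preserving the calibration condition that $\phi_n|_C$ factor through $C_n$. This condition is exactly the vanishing of the section
$$\Psi(\phi_n)=(\phi_n^\ast Q_n)|_C\in\H^0(C,\ms O_C(2)),$$
obtained by pulling back $Q_n\in\Sym^2 U^\vee$ along $\phi_n$ and restricting to $C$. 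Thus the fiber of $\tau_n$ is the zero scheme of $\Psi$, and it suffices to prove that $\Psi$ is a smooth morphism, i.e. that its differential is surjective at every point of the zero locus.

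The main step is to identify $d\Psi$ geometrically and show it is surjective. Writing $g=\phi_n|_C:C\to C_n\subset\bP^2$, a first-order deformation of the camera induces a section of $g^\ast T_{\bP^2}$, and the first-order change in $\Psi$ is its image under the normal projection $g^\ast T_{\bP^2}\to g^\ast N_{C_n/\bP^2}$, where $g^\ast N_{C_n/\bP^2}\cong\ms O_C(2)$ via the nondegeneracy of $Q_n$. For smooth $C$ the map $g$ is an isomorphism onto $C_n\cong\bP^1$, so $g^\ast N_{C_n/\bP^2}\cong\ms O_{\bP^1}(4)$, and two classical facts give surjectivity of $d\Psi$ onto $\H^0(C,\ms O_C(2))$: first, the camera deformations realize all of $U\otimes\H^0(C,\ms O_C(1))$, because restriction of linear forms $V\to\H^0(C,\ms O_C(1))$ is surjective; second, the induced bundle map $U\otimes\ms O_C(1)\to N_{C_n/\bP^2}$ has components spanning $\H^0(\bP^1,\ms O(2))$ (again by nondegeneracy of $Q_n$), and multiplication $\H^0(\bP^1,\ms O(2))\otimes\H^0(\bP^1,\ms O(2))\to\H^0(\bP^1,\ms O(4))$ is surjective. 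Equivalently, the obstruction to lifting lives in $\H^1(C,g^\ast N_{C_n/\bP^2})=\H^1(\bP^1,\ms O(4))=0$, which makes the vanishing transparent.

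The hard part will be the degenerate calibration data, where $C$ is a double ruling on the quadric cone and is non-reduced; here $g$ is no longer an isomorphism, so $\ms O_C(2)$, the restriction map, and $g^\ast N_{C_n/\bP^2}$ must all be analyzed directly on the non-reduced conic. Since $C$ still has arithmetic genus zero, I expect $\H^1(C,g^\ast N_{C_n/\bP^2})$ to vanish for degree reasons and the same surjectivity to persist, but confirming this requires an explicit computation of $g^\ast N_{C_n/\bP^2}$ and of $V\to\H^0(C,\ms O_C(1))$ on the double line, which is the one genuinely new input beyond the smooth case. Granting this, $\Psi$ is smooth along its entire zero locus, so each fiber of $\tau_n$ is smooth; as this holds over every point of the base, $\tau_n$ is smooth.
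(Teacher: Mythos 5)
Your smooth-case argument is essentially correct: realizing the fiber of $\tau_n$ as the zero scheme of $\Psi(\phi_n)=(\phi_n^\ast Q_n)|_C$ and checking surjectivity of $d\Psi$ (equivalently, vanishing of the obstruction space $\H^1(C,g^\ast N_{C_n/\bP^2})=\H^1(\bP^1,\ms O(4))=0$) is a legitimate way to get formal smoothness at points where $C$ is a smooth conic. But the gap you flag yourself is genuine and cannot be waved off. The stack $\CalCam_n$ includes degenerate calibration data by definition (the definition of a relative calibration datum only requires $C$ to be a degree two curve, and the paper explicitly calls the non-smooth ones ``degenerate'' rather than excluding them); indeed these points are the entire reason $\nu_n$ is proper and the twisted-pair construction compactifies (``even if $C_K$ is a non-degenerate conic, $C_0$ need not be. This is why we need to add degenerate conics''). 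Worse, the degenerate case is not the routine extension you predict: if $C=2L$ is a double line whose image must lie in the smooth conic $C_n$, then the linear map $\phi_n|_L$ has image a line or a point, and since $C_n$ contains no line, $\phi_n$ must contract $L$ --- forcing $L$ through the camera center. At such points your basic setup (center avoiding $C$, $g:C\to C_n$ a morphism, $\Psi$ a well-defined section) degenerates qualitatively, so ``compute $g^\ast N_{C_n/\bP^2}$ on the double line and check $\H^1=0$'' is not a self-contained completion of the proof. Since smoothness of $\CalCam_n$ at exactly these boundary points is a substantive part of the claim, your proposal proves a strictly weaker statement: smoothness of $\tau_n$ over the locus of smooth calibrations.

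It is worth contrasting this with the paper's route, which never bifurcates into cases. There, after the same reduction to the infinitesimal lifting problem, the deformation of the last camera is produced by lifting the three linear forms $\sigma_0,\sigma_1,\sigma_2\in\Gamma(\bP^3_{A_0},\ms O(1))$ one at a time, given prescribed lifts of $\sigma_j|_C$ to $\Gamma(C_A,\ms O(1))$. The key inputs are only that $C_A$ lies in a plane $\bP^2_A\subset\bP^3_A$ with $\ms I_{C_A/\bP^2_A}\cong\ms O(-2)$, together with the vanishing of the relevant cohomology of line bundles on $\bP^2$ and $\bP^3$; a diagram chase through two short exact sequences then lifts the sections. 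Nothing in that argument distinguishes a smooth conic from a double line, so the case analysis you dread simply does not arise. Two smaller remarks: first, your closing inference ``each fiber is smooth, hence $\tau_n$ is smooth'' needs flatness (or the differential argument run relatively over the base, which your setup does permit); second, even where it applies, your normal-bundle computation is doing with more machinery what the paper accomplishes with the elementary observation that restriction $\Gamma(\bP^2,\ms O(1))\to\Gamma(C,\ms O(1))$ is an isomorphism for any degree two plane curve $C$.
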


\begin{proof}
  By \cref{L:calcam-forget-repble} and \cite[Tag
  02H6]{stacks-project}, it suffices to show that $\tau_n$ is formally
  smooth. Let $A \to A_0$ be a square-zero extension of rings, and
  suppose that $$(\phi_1, \dots, \phi_n, C) \in \CalCam_n(A_0)$$ is
  fixed. To show formal smoothness we can work Zariski-locally and
  thus assume that the domains of $\phi_1, \dots, \phi_n$ are
  $\bP^3_{A_0}$. Now suppose that we fix a deformation $$((\phi'_1,
  \dots, \phi'_{n-1}, C_A), C_n) \in \CalCam_{n-1}(A)\times_{\ms C^{n-1}(A)}\ms
  C^n(A).$$
  (Because we are working over the universal conic in each image
  plane, we have to specify the deformation of the conic that we will
  use in attempting to deform the $n$th calibrated camera.) 
  To show formal smoothness
  is suffices to extend $\phi_n$ to a morphism $\phi'_n$ that maps
  $C_A$ to $C_n$.   
  
  The choice of deformation of $C$ to $C_A$ induces a lift of $C \to
  \bP^2_{A_0}$ to $C_A \to \bP^2_A$. This is because $\H^1(C,\ms O_C(1))=0$, so sections defining a map can always be lifted. We will show that we can extend this to a camera that acts on $C_A$ in the given way.
  
  We are thus reduced to the following: we are given a tuple of three sections 
$\sigma_0,\sigma_1,\sigma_2\in\Gamma(\bP^3_{A_0}, \ms O(1))$, a planar curve 
$C_A\subset \bP^3_A$ of degree $2$, and lifts of the $\sigma_j|_{C}$ to $\Gamma(C_A,\ms 
O(1))$. We wish to lift these extensions to sections 
$\widetilde\sigma_j\in\Gamma(\bP^3_A,\ms O(1))$. We can do this one section at a 
time. By \cref{defn:calibration-datum}, the curve $C_A$ is contained in a canonically 
defined family of planes in $\bP^3_A$; we will write  $C_A\subset\bP^2_A\subset\bP^3_A$ and 
similarly for $A_0$. (If the plane is not trivial, we can further shrink $A$ to 
make it so; this is immaterial for the calculations and is only a notational 
device.)

  Consider the diagrams 
  \[
    \begin{tikzcd}
      0\ar[r] & \Gamma(\bP^3_{A_0},\ms O)\tensor_{A_0}I\ar[r]\ar[d] &
      \Gamma(\bP^3_A,\ms O)\ar[r]\ar[d] & \Gamma(\bP^3_{A_0},\ms O)\ar[r]\ar[d] 
& 0\\
      0\ar[r] & \Gamma(\bP^3_{A_0},\ms O(1))\tensor_{A_0}I\ar[r]\ar[d] &
      \Gamma(\bP^3_A,\ms O(1))\ar[r]\ar[d] & \Gamma(\bP^3_{A_0},\ms 
O(1))\ar[r]\ar[d] & 0\\
      0\ar[r] & \Gamma(\bP^2_{A_0},\ms O(1))\tensor_{A_0}I\ar[r] &
      \Gamma(\bP^2_A,\ms O(1))\ar[r] & \Gamma(\bP^2_{A_0},\ms O(1))\ar[r] & 0
    \end{tikzcd}
  \]
and
  \[
     \begin{tikzcd}
      0\ar[r] & \Gamma(\bP^2_{A_0},\ms O(-1))\tensor_{A_0}I\ar[r]\ar[d] &
      \Gamma(\bP^2_A,\ms O(-1))\ar[r]\ar[d] & \Gamma(\bP^2_{A_0},\ms 
O(-1))\ar[r]\ar[d] & 0\\
      0\ar[r] & \Gamma(\bP^2_{A_0},\ms O(1))\tensor_{A_0}I\ar[r]\ar[d] &
      \Gamma(\bP^2_A,\ms O(1))\ar[r]\ar[d] & \Gamma(\bP^2_{A_0},\ms 
O(1))\ar[r]\ar[d] & 0\\
      0\ar[r] & \Gamma(C,\ms O(1))\tensor_{A_0}I\ar[r] &
      \Gamma(C_A,\ms O(1))\ar[r] & \Gamma(C,\ms O(1))\ar[r] & 0.
    \end{tikzcd}
  \]
  By the usual calculations of the cohomology of projective space, these two
  diagrams have exact columns. A simple diagram chase then shows that we can
  lift sections to $\bP^3_A$ given values on $\bP^3_{A_0}$ and $C_A$, completing
  the proof.
\end{proof}

\begin{proof}[Proof of \cref{P:cal-cam-yummy}] It remains to show
  smoothness. We use \cref{sec:moduli-calibr-camera} and induction on
  $n$. For $n=1$, we see that $\CalCam_1$ is smooth over $\ms C$, which is
  itself open in a projective space, hence smooth.
\end{proof}

\subsection{Deformation theory of calibrated camera configurations}
\label{sec:def-cal}

In this section we prove the following analogue of 
\cref{prop:defo-proj}.

\begin{thm}\label{thm:cal-defos-same} 
If $(\bPhi,C)$ is a non-degenerate calibrated 
general multiview configuration of length $n>2$ with associated
multiview flag
$$(C\subset V)\hookrightarrow(C_1\times\cdots\cdot
C_n\subset(\bP^2)^n)$$ then we have that the infinitesimal
deformations of $(\bPhi,C)$ are in bijection with the infinitesimal
deformations of $C\subset V$ as a closed subscheme diagram of
$C_1\times\cdots\times C_n\subset(\bP^2)^n$.
\end{thm}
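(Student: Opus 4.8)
The plan is to deduce the calibrated statement from the uncalibrated Theorem \ref{thm:defos-same} by a relative argument that separates the deformations of the multiview scheme $V$ from those of the calibration curve $C$. First I would introduce the natural transformation of deformation functors
$$F\colon \Def_{(\bPhi,C)}\longrightarrow \Def_{(C\subset V)}$$
sending a calibrated deformation $(\bPhi',C')$ to the flag $\bigl(\bPhi'(C')\subset\Scheme(\bPhi')\bigr)$, and observe that it fits into a commutative square over the two forgetful maps to $\Def_{\bPhi}$ and $\Def_V$. The right-hand vertical map $\Def_{\bPhi}\to\Def_V$ is an isomorphism by Proposition \ref{sec:lift-deform-mult} (which is exactly Theorem \ref{thm:defos-same}). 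A map of deformation functors compatible over an isomorphism of the target functors is itself an isomorphism as soon as it is bijective on fibers, so it suffices to check the following: fixing a deformation of $\bPhi$ (equivalently of $V$ together with its image conics $C_i$), the calibration-datum deformations of $C$ on the configuration side must be matched with the sub-diagram deformations of $\bPhi(C)$ inside $V$ constrained to lie in $C_1\times\cdots\times C_n$ on the flag side.

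The key geometric input is that the resolution $\rho\colon\Res(\bPhi)\to(\bP^2)^n$ restricts to an isomorphism from an open neighborhood of (the strict transform of) $C$ onto an open neighborhood of $\bPhi(C)$ in $V$. Since $(\bPhi,C)$ is a smooth non-degenerate calibration datum, $C$ avoids the camera centers and each $\pr_i\circ\bPhi$ carries $C$ isomorphically onto the smooth conic $C_i$; thus $C$ lifts isomorphically into $\Res(\bPhi)$, disjoint from the exceptional divisors. In the non-collinear case $\rho$ is a global closed immersion by Corollary \ref{prop:res-closed}, so there is nothing more to check. In the collinear case $\rho$ contracts a single line $\ell$ to the unique singular point of $V$, and I would argue that $C$, and crucially its infinitesimal deformations, avoid $\ell$: if $C$ met $\ell$ its image would pass through the singular point $(e_1,\dots,e_n)$, forcing every epipole $e_i$ to lie on $C_i$, which is excluded for a non-degenerate datum. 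Granting this, $\rho$ is an isomorphism over a neighborhood of $\bPhi(C)$, and transporting along it converts the three conditions defining a calibration datum — that $C'$ be a degree-two curve, that $\bPhi'$ be regular along $C'$, and that $\pr_i\circ\bPhi'|_{C'}$ factor through $C_i'$ — precisely into the flag conditions that $\bPhi(C)'$ be a degree-two curve contained in $V'$ with $\pr_i\bigl(\bPhi(C)'\bigr)\subset C_i'$. This yields the fiber bijection, and hence surjectivity of $F$.

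For injectivity I would record the calibrated refinement of Proposition \ref{prop:iso-iso}: two calibrated configurations have equal multiview flags if and only if they are isomorphic as calibrated configurations. The equality $V^1=V^2$ produces a conjugating automorphism $\varepsilon$ of $\bP^3$ by Proposition \ref{prop:iso-iso}, the equality of the $C$-parts of the flags forces $\varepsilon(C^1)=C^2$, and the image conics, being determined by the base point of $\ms C^n$, are automatically preserved; this gives injectivity of $F$, and combined with the fiber bijection the claimed isomorphism of deformation functors. The main obstacle is the collinear case: ensuring that the calibration curve and all of its first-order deformations stay clear of the contracted line, so that the comparison via $\rho$ remains an isomorphism. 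I expect this to require, beyond the epipole argument above, the same cotangent-complex vanishing used in Lemma \ref{lem:cotangent-vanishing} and Proposition \ref{prop:def-coll} — that for $n>4$ the contraction $\Res(\bPhi)\to V$ carries no obstruction — now applied to the flag rather than to $V$ alone, so that the deformed curve continues to miss the deformed contracted locus.
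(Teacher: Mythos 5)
Your overall strategy is the same as the paper's: forget the calibration and apply Theorem \ref{thm:defos-same} to get a deformation $\widetilde\bPhi$ of $\bPhi$ inducing the given deformation of $V$, then use the fact that the configuration is an isomorphism onto its image near $C$ to transport the flag's deformation of $\bPhi(C)$ into a calibration of $\widetilde\bPhi$, with Proposition \ref{prop:iso-iso} supplying injectivity. Your functor-of-fibers packaging and the explicit calibrated refinement of Proposition \ref{prop:iso-iso} are just a more detailed write-up of the paper's two-sentence argument, and in the non-collinear case your proof is complete and correct.

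The gap is in the collinear case. You claim that $C$ cannot meet the contracted line $\ell$ because that would force every epipole $e_i$ to lie on $C_i$, ``which is excluded for a non-degenerate datum.'' The forcing is correct, but the exclusion is not: in this paper \emph{non-degenerate} means only that the calibrating curve $C$ is a smooth conic (Definition \ref{defn:calibration-datum} and the subsequent notation), and nothing in the definition of a calibration datum prevents a smooth $C$ from meeting the baseline $L$ at a point that is not a camera center. Concretely, take collinear centers on $L$, a smooth conic $C$ whose plane meets $L$ at a non-center point and contains no center, and let $C_i$ be the image of $C$ under the $i$th camera: this is a legitimate non-degenerate calibrated collinear configuration in which all epipoles lie on the $C_i$, so your dichotomy fails at exactly the points the collinear case is supposed to handle. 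The closing appeal to Lemma \ref{lem:cotangent-vanishing} does not repair this, because the problem is that $C$ itself may meet $\ell$, not that its deformations might drift into $\ell$ --- infinitesimal deformations do not change the underlying topological space, so disjointness, once established, persists for free. For what it is worth, the paper's own proof is silent on this point: it simply asserts that $\bPhi$ is an isomorphism onto its image in a neighborhood of $C$, which is precisely the assertion that fails at such configurations. So you have correctly isolated the one delicate point of the collinear case that the paper glosses over, but your proposed resolution of it is not valid as written.
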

\begin{proof}
  The proof leverages the proof of \cref{prop:defo-proj}. In
  particular, we can forget the calibrations and apply 
 \cref{prop:defo-proj} to see that under the given hypotheses any
  deformation of $\Flag(\bPhi,C)$ induces a deformation of
  $\Scheme(\bPhi)$ that is the image of a deformation $\widetilde\bPhi$ of 
$\bPhi$. The
  assumption that the deformation of $\Scheme(\bPhi)$ arises from a
  deformation of $\Flag(\bPhi,C)$ means that there is also an
  associated deformation of $C$. Since $\bPhi$ is an isomorphism onto
  its image in a neighborhood of $C$, this deformation of $C$
  canonically lifts to give a calibration of $\widetilde\bPhi$.
\end{proof}

\section{Comparison morphisms}
\label{sec:comparisons}
In \cref{sec:forget-map} we compare $\Cam_n$ and $\CalCam_n$ by the 
natural decalibration morphism. In \cref{sec:twisted-pairs-intro} we 
focus on the case of two cameras, leading to a 2-1 cover of the esesntial 
variety that compactifies the twisted pair covering. Finally, in \cref{sec:morph-hilb} we state how both moduli spaces of cameras map to 
appropriate Hilbert schemes. 

\subsection{The decalibration morphism $\nu_n:\CalCam_n\to\Cam_n\times\ms C^n$}
\label{sec:forget-map}

In this section, we study a natural morphism
$$\CalCam_n\to\Cam_n\times\ms C^n$$
given by forgetting the camera calibration datum.  

\begin{defn}
  The \emph{decalibration morphism\/} is the morphism
$$\nu_n:\CalCam_n\to\Cam_n\times\ms C^n$$
given by sending $(\bPhi,C)$ to $\bPhi$. 
\end{defn}


\subsubsection{Intersections of conic cones}
\label{sec:cone-int}

Before we delve into the geometry of $\nu_n$, we need a few preliminaries about
intersections of conic cones in $\bP^3$.

\begin{prop}\label{P:cone-int}
  Let $X_1$ and $X_2$ be two conic cones in $\bP^3$ with distinct cone
  points $P_1$ and $P_2$. Suppose $C\subset X_1\cap X_2$ is a plane
  curve of degree $2$, so that $X_1\cap X_2=C\cup D$ with $D$ a curve
  of degree $2$. Then $D$ must be planar and have support distinct
  from the support of $C$. More precisely, one of the following must
  occur.
  \begin{enumerate}[leftmargin=1.5cm]
   \item $C$ and $D$ are smooth conics meeting at two distinct points.
   \item $C$ is a smooth conic and $D$ is a doubled planar line.
   \item $C$ is a doubled planar line and $D$ is a smooth conic.
  \end{enumerate}
  In particular, we can never have $C=D$ (i.e., $X_1\cap X_2$ cannot
  be doubled smooth conic).
\end{prop}
\begin{proof}
  This is a standard result, and it can be extracted from the material in
  \cite[Chapter 13, Section 11]{MR1288306}. We briefly describe a proof in
  modern language for the convenience of the reader. By assumption, $C$ is
  either a smooth conic or a planar doubled line. It is easy to write down
  examples where the intersection $X_1\cap X_2$ is a union of two smooth conics
  meeting at two points (e.g., in characteristic different from $2$ the pair
  $X^2+Y^2+Z^2=0$ and $Y^2+Z^2+W^2=0$ is such an example).
    
  If $X_1\cap X_2$ contains a doubled planar line, then $X_1$ and
  $X_2$ must be tangent along a ruling. Since $P_1\neq P_2$, the
  residual curve must be a smooth conic.
    
  Suppose $X_1\cap X_2=C\cup D$ with $C$ a smooth conic and $D$ a
  singular curve. We wish to show that $D$ is a doubled planar
  line. Since $D$ has degree $2$ in $\bP^3$, it must be the case that
  the reduced structure on $D$ is a line. The only doubled lines
  contained in a conic cone are planar: they are given by intersecting
  with the tangent plane along rulings.

  It remains to rule out the possibility that $X_1\cap X_2$ is a doubled conic. Note that a doubled conic is
  the intersection of $X_1$ with a doubled plane $2P\in\ms
  O_{\bP^3}(2)$. We can rule out this case if we can show that the
  pencil spanned by $X_1$ and a doubled plane not containing its cone
  point does not contain any more conic cones. We can represent the cone $X_1$ and an aribtrary doubled plane missing the cone point by
  the matrices 
$$\begin{pmatrix}
1 & 0 & 0 & 0\\
0 & 1 & 0 & 0\\
0 & 0 & 1 & 0\\
0 & 0 & 0 & 0
\end{pmatrix}\text{\rm\ and } \begin{pmatrix}
a^2 & ab & ac & a\\
ab & b^2 & bc & b\\
ac & bc & c^2 & c\\
a & b & c & 1
\end{pmatrix} $$
for $a,b,c\in k$. Searching for a conic cone in the pencil corresponds
to finding $\lambda$ such that the following matrix has rank 3:
$$\begin{pmatrix}
a^2+\lambda & ab & ac & a\\
ab & b^2+\lambda & bc & b\\
ac & bc & c^2+\lambda & c\\
a & b & c & 1
\end{pmatrix} \text{\rm\ with row reduction } 
\begin{pmatrix}
\lambda & 0 & 0 & 0\\
0 & \lambda & 0 &0 \\
0 & 0 & \lambda & 0\\
a & b & c & 1
\end{pmatrix}.$$
But the latter matrix can never have rank $3$.
\end{proof}

\subsubsection{The geometry of $\nu_n$}
\label{sec:geom-map}

Fix a point $\xi$ of $\Cam_n\times\ms C^n$. That is, fix conics
$C_1,\ldots,C_n$ in $\bP^2$ and a multiview configuration $\bPhi$. In
this section we compute the fiber of $\nu_n$ over $\xi$.

\begin{prop}\label{P:tiny-fibers}
  The scheme-theoretic fiber $\nu_n^{-1}(\xi)$ is a reduced
  $\kappa(\xi)$-scheme of length at most $2$.
\end{prop}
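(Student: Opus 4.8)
The plan is to identify $\nu_n^{-1}(\xi)$ with the scheme of relative calibration data $C\subset\bP^3$ for the fixed configuration $\bPhi$ together with the fixed image conics $C_1,\dots,C_n$, and then to control this scheme through the geometry of intersecting conic cones. The first step is to observe that the requirement that $(\pr_i\circ\bPhi)|_C$ factor through $C_i$ forces $C$ into the conic cone $X_i$ over $C_i$ with vertex the center of $\phi_i$; hence every calibration datum is a degree-two curve contained in $\bigcap_i X_i\subset X_1\cap X_2$. Since $\bPhi$ is general, its first two centers are distinct, so Proposition \ref{P:cone-int} applies and presents $X_1\cap X_2$ as one of its four types of degree-four curve. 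A case-by-case inspection then bounds the number of degree-two subcurves eligible to be calibration data: the irreducible quartic and the twisted-cubic-plus-line carry none, the two-smooth-conics case carries two, and the conic-plus-doubled-line case carries at most two (the smooth conic and possibly the doubled line). In every case the count is at most two, giving the set-theoretic bound.

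To upgrade this to reducedness I would compute the tangent space of the fiber at a point $[C]$. A first-order deformation of $C$ as a calibration datum, with $\bPhi$ and the $C_i$ held fixed, is a deformation $C_\epsilon\subset\bP^3_{\kappa(\xi)[\epsilon]}$ that remains inside each $X_i$; writing $N_{C/X_i}\subset N_{C/\bP^3}$ for the sub--line bundle of normal directions tangent to $X_i$, the tangent space is therefore $\H^0(C,\,N_{C/X_1}\cap N_{C/X_2})$. The decisive observation is that these two sub--line bundles agree exactly when $X_1$ and $X_2$ are tangent along all of $C$, that is, exactly when $C$ occurs as a doubled conic in the pencil spanned by $X_1$ and $X_2$. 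Proposition \ref{P:cone-int} forbids precisely this: $X_1\cap X_2$ is never a doubled conic. Consequently $N_{C/X_1}\neq N_{C/X_2}$ as subsheaves of $N_{C/\bP^3}$, their intersection is a torsion-free rank-zero subsheaf of a line bundle on the smooth rational curve $C$, hence is the zero sheaf, and its $\H^0$ vanishes. Each calibration datum is thus an isolated reduced point, and together with the set-theoretic bound this yields a reduced fiber of length at most two.

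The step I expect to be the main obstacle is the careful bookkeeping around the degenerate locus. In the conic-plus-doubled-line case the doubled line is the common ruling joining the two cone points and so meets both camera centers; I would need to argue from the regularity clause of Definition \ref{defn:calibration-datum} (that $\bPhi$ be regular along $C$) that this doubled line either fails to be a legitimate calibration datum or else is itself a reduced point, while keeping track of the genuinely degenerate data that the surrounding discussion records as limits of smooth calibration data. The normal-bundle computation above is phrased for a smooth conic $C$ and must be reexamined whenever $C$ degenerates, since $N_{C/X_i}$ then ceases to be a line bundle on a smooth rational curve; there I would either specialize from the smooth case---using once more that ``never a doubled conic'' prevents two data from colliding---or carry out a direct local computation on the singular curve. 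Throughout, the exclusion of doubled conics in Proposition \ref{P:cone-int} is the single fact that simultaneously bounds the length and forces reducedness.
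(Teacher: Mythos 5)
Your proposal follows essentially the same route as the paper: both identify the fiber with the scheme of degree-two curves contained in the intersection of the cones $X_i$ over the image conics, and both make Proposition \ref{P:cone-int} -- specifically the impossibility of a doubled conic -- carry the whole argument. Your normal-bundle computation of the tangent space at a smooth conic is a correct and more explicit version of what the paper compresses into the parenthetical remark that ``the lack of doubled conic means that the fibers are discrete.''

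The obstacle you flag at the end resolves via the first branch of your own disjunction, and this is implicitly how the paper's proof handles it: any line contained in a cone is a ruling through its vertex, so a doubled line inside $X_1\cap X_2$ necessarily passes through both camera centers, and the regularity clause of Definition \ref{defn:calibration-datum} then disqualifies it as a calibration datum. Consequently the fiber is precisely the scheme of \emph{smooth} conics in $\bigcap_i X_i$ (which is how the paper phrases its proof), every point of it is covered by your tangent-space argument, and no separate local computation on a singular curve is needed.
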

\begin{proof}
  The fiber $\nu_n^{-1}(\xi)$ is precisely the scheme of smooth conics
  in the intersection of the cones over the image conics $C_i$ inside
  the ambient $\bP^3$. The result is thus immediate from 
 \cref{P:cone-int}. (In particular, the lack of doubled conic means
  that the fibers are discrete.) 
\end{proof}

\begin{cor}\label{C:unram-nu}
  The morphism $\nu_n$ is unramified.
\end{cor}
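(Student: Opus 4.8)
The plan is to deduce Corollary \ref{C:unram-nu} directly from Proposition \ref{P:tiny-fibers}, which has just established that every scheme-theoretic fiber of $\nu_n$ is reduced of length at most $2$. The key observation is that being unramified is a fiberwise condition: a morphism $f:X\to Y$ locally of finite type is unramified if and only if for every point $y\in Y$ the fiber $X_y$ is unramified over the residue field $\kappa(y)$, equivalently if each fiber is a disjoint union of spectra of finite separable field extensions of $\kappa(y)$. So the entire content of the corollary is to read off this fiberwise criterion from the structural result we already have.

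First I would recall that $\nu_n$ is a morphism of algebraic spaces of finite type over $\ms C^n$ (indeed $\CalCam_n$ and $\Cam_n\times\ms C^n$ are smooth algebraic spaces of finite type, by Proposition \ref{P:cal-cam-yummy} and the corresponding statement for $\Cam_n$), so in particular $\nu_n$ is locally of finite type and the fiberwise characterization of unramifiedness applies. Then, fixing a point $\xi$, Proposition \ref{P:tiny-fibers} tells us that $\nu_n^{-1}(\xi)$ is a reduced $\kappa(\xi)$-scheme of length at most $2$. A reduced finite scheme over a field is automatically a finite disjoint union of spectra of finite field extensions; the point is that reducedness forces each local ring to be a field rather than a nontrivial Artinian algebra, which is exactly the obstruction to unramifiedness at a point. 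One still needs the extensions to be \emph{separable}, but since the fiber has length at most $2$, each residue field extension has degree at most $2$, and in any case the relevant statement can be checked after passing to a geometric point: over an algebraically closed field a reduced finite scheme is a disjoint union of reduced points, so $\Omega^1_{\nu_n^{-1}(\xi)/\kappa(\xi)}$ vanishes at every point of every geometric fiber.

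Concretely, I would phrase the argument as: unramifiedness is equivalent to the vanishing of the sheaf of relative differentials $\Omega^1_{\CalCam_n/(\Cam_n\times\ms C^n)}$, and this can be checked on geometric fibers. Over a geometric point $\bar\xi$, the fiber is a reduced finite scheme over an algebraically closed field (by Proposition \ref{P:tiny-fibers}, which is stable under the base change to $\kappa(\bar\xi)$ since the conic-intersection analysis of Proposition \ref{P:cone-int} is geometric), hence a disjoint union of reduced closed points, whose cotangent sheaf vanishes. Therefore $\Omega^1_{\nu_n}=0$ and $\nu_n$ is unramified.

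I do not expect a genuine obstacle here: this is a short formal deduction, and the only point requiring a moment's care is making sure the reducedness in Proposition \ref{P:tiny-fibers} is preserved under base change to a geometric point (so that one really gets separability and not merely a finite field extension). Since the proof of Proposition \ref{P:cone-int} is geometric and its conclusions are insensitive to field extension, this causes no difficulty, and the corollary follows in a line or two.
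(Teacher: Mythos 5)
Your proposal is correct and takes the same route as the paper, which simply deduces the corollary as an immediate consequence of Proposition \ref{P:tiny-fibers}; you have merely spelled out the fiberwise criterion for unramifiedness that the paper leaves implicit. Your attention to checking reducedness after base change to a geometric point (so that separability is genuinely obtained) is exactly the right care to take, and it holds here because the conic-intersection analysis of Proposition \ref{P:cone-int} is insensitive to field extension.
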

\begin{proof}
  This is an immediate consequence of \cref{P:tiny-fibers}.
\end{proof}

\begin{prop}\label{L:nu_n-closed-image}
  The morphism $\nu_n$ is proper.
\end{prop}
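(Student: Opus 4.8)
The plan is to prove that $\nu_n : \CalCam_n \to \Cam_n \times \ms C^n$ is proper using the valuative criterion, which is natural here because $\CalCam_n$ is an algebraic space of finite type (Proposition \ref{P:cal-cam-yummy}) and $\nu_n$ is already known to be of finite type and unramified (Corollary \ref{C:unram-nu}). Since we have separatedness essentially for free (unramified morphisms are in particular separated), the content is the \emph{existence} part of the valuative criterion: given a DVR $R$ with fraction field $K$, a map $\Spec R \to \Cam_n \times \ms C^n$, and a lift of the generic point to $\CalCam_n$, we must extend the lift over all of $\Spec R$.

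Here is how I would organize the argument. A map $\Spec R \to \Cam_n \times \ms C^n$ is a multiview configuration $\bPhi$ over $R$ together with a choice of image-plane conics $C_1, \ldots, C_n$ (coming from the $\ms C^n$ factor, spread out over $R$). A lift to $\CalCam_n$ over $K$ is exactly a choice of smooth conic $C_K \subset \bP^3_K$ serving as a calibration datum, i.e.\ a smooth conic lying in the intersection of the cones $X_i$ over the $C_i$ in $\bP^3_K$. The key geometric input is Proposition \ref{P:cone-int}: over $K$, the intersection $\bigcap_i X_i$ of these conic cones is one-dimensional and its possible smooth-conic components are tightly constrained. First I would reduce to the case $n = 2$, since a calibration datum for the full configuration is a conic lying in \emph{every} cone, hence in particular in $X_1 \cap X_2$; the extra cones only cut things down further and impose closed conditions. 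Then I would take the scheme-theoretic closure $\overline{C_K}$ inside $\bP^3_R$ of the chosen conic $C_K \subset \bP^3_K$ and argue that its special fiber $C_0$ is still a degree-two curve lying in the special fiber of $\bigcap_i X_i$ — this is where flatness of the closure and the classification of \ref{P:cone-int} do the work, guaranteeing that the limiting curve is an honest (possibly degenerate) calibration datum rather than degenerating to something of the wrong dimension.

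The main obstacle, and the place requiring real care, is controlling the \emph{special fiber} of the limiting conic: a priori the flat limit of a smooth conic could acquire embedded points, drop dimension, or fail to lie in the degenerate locus allowed by the definition of a calibration datum. This is precisely why the paper admits degenerate (non-smooth) calibration data into $\CalCam_n$ — they are exactly the boundary limits that make $\nu_n$ proper. I would invoke Proposition \ref{P:cone-int} to show that the only curves appearing in the intersection of the cones are the allowed types (irreducible quartic, twisted cubic plus line, two conics, conic plus doubled line), and crucially that a \emph{doubled conic} can never occur; combined with the fact that a flat family of degree-two subschemes of a plane is governed by sections of $\ms O_{\bP^1}(2)$ (as used in the proof of Proposition \ref{sec:moduli-calibr-camera}), this forces the limit $C_0$ to be a genuine degree-two curve with $C_0 \subset X_i$ for all $i$, yielding a valid (possibly degenerate) calibration datum. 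The remark following Definition \ref{defn:calibration-datum} already records that degenerate calibration data arise precisely as such limits, so this identifies $(\bPhi, \overline{C_K})$ as the desired $R$-point of $\CalCam_n$ restricting to the given data.

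Finally I would check that the extension is unique, which follows immediately from Proposition \ref{P:tiny-fibers}: the fibers of $\nu_n$ are reduced of length at most $2$ and in particular discrete, so there is no room for two distinct extensions agreeing generically; this also re-confirms separatedness. Assembling these pieces verifies the valuative criterion and hence properness. I expect the reduction to $n=2$ and the unramifiedness input to be routine, with essentially all the difficulty concentrated in the flat-limit analysis of the calibrating conic and the appeal to the exhaustive classification in Proposition \ref{P:cone-int}.
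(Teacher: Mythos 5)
Your proposal follows essentially the same route as the paper's proof: apply the valuative criterion, form the cones over the image-plane conics, and take the flat limit (scheme-theoretic closure) of the calibrating conic $C_K$ inside their intersection; since the closure is $R$-flat, the special fiber is again a degree-two curve lying in all the cones, and the possibly degenerate limit is exactly why degenerate calibration data are admitted into $\CalCam_n$. One refinement: the tool that rules out embedded points or dimension drops in the limit is constancy of the Hilbert polynomial $2t+1$ in a flat family (which forces the limit to be a plane conic), not Proposition \ref{P:cone-int}, which classifies intersections of cones over a field and says nothing directly about flat limits; your appeal to it there is not what does the work, though the conclusion is the same as the paper's.

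Two of your side claims, however, are false as stated. Unramified morphisms are \emph{not} separated in general: unramifiedness makes the diagonal an open immersion, not a closed one (the affine line with doubled origin is \'etale, hence unramified, over $\bA^1$ but not separated). For the same reason, discreteness and reducedness of fibers (Proposition \ref{P:tiny-fibers}) does not give uniqueness of valuative lifts: in the doubled-origin example every fiber is reduced and discrete, yet a lift of the generic point extends in two distinct ways. So neither of your arguments establishes the separatedness half of properness. To be fair, the paper's own proof also addresses only the existence half; separatedness is most naturally seen by realizing $\CalCam_n$, over $\Cam_n\times\ms C^n$, inside a relative Hilbert scheme of degree-two curves (the conditions cutting out calibration data being locally closed), so that the diagonal is closed for free.
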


\begin{proof}
  Suppose we have a multiview configuration $\bPhi$ of length $2$ over
  a complete dvr $R$ with fraction field $K$, degree two curves
  $C_1, \dots, C_n \subset \bP_R^2$ and a degree two curve
  $C_K\subset\bP^3_K$ such that $\bPhi_K$ maps $C_K$ isomorphically to
  the generic fiber of each $C_i$. By the valuative criterion for
  properness it suffices to extend $C_K$ to a degree two curve $C_R$.
  
  Assume we have a multiview configuration $\bPhi$ of length $2$ over
  a complete dvr $R$ with fraction field $K$, and suppose we have
  conics $C_1, \dots, C_n \subset \bP_R^2$ in each image plane. Write
  $\widebar C_i \subset \bP^3$ for the cone over $C_i$ under
  $\pr_i \circ \bPhi$ and
  $I=\widebar C_1 \cap \cdots \cap \widebar C_n$. Finally, assume that
  there is a conic $C_K\subset\bP^3_K$ such that $\bPhi_K$ maps $C_K$
  isomorphically to the generic fiber of each $C_i$; that is,
  $C_K \subset I_K$. Let $C_R$ be the specialization of $C_K$ in the
  closed fiber $C_0$. The curve $C_R$ is degree $2$, giving us a
  calibrated configuration over $R$.
\end{proof}

Note that even if $C_k$ is a non-degenerate conic, $C_0$ need not
be. This is why we need to add degenerate conics.

\begin{prop}\label{P:nu_n-smooth}
  The morphism $\nu_2$ has smooth image and general fiber of length
$2$. For any $n>2$ the morphism $\nu_n$ is generically injective.
\end{prop}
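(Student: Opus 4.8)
The plan is to read everything off the scheme-theoretic fibers of $\nu_n$, which by Proposition~\ref{P:tiny-fibers} are the reduced schemes of smooth conics contained in the intersection $\widebar C_1\cap\cdots\cap\widebar C_n$ of the cones over the image conics. Since every object of $\CalCam_n$ supplies a conic $C\subset\bP^3$ mapping isomorphically onto each $C_i$, this intersection always contains the smooth conic $C$, so by Proposition~\ref{P:cone-int} the intersection of the first two cones is always of type (3) (two distinct smooth conics) or type (4) (a smooth conic and a doubled line). I would first record that the general fiber of $\nu_2$ has length $2$: fixing $C$ and varying the two centers, Lemma~\ref{lem:conic-in-cones} shows that the cones through $C$ sweep out a three-dimensional family, so two general such cones cut out $C$ together with a residual \emph{smooth} conic $C'$ (type (3)), by the Bertini argument in the proof of Proposition~\ref{P:cone-int}.

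For the smoothness of the image of $\nu_2$, the key is that $\nu_2$ is a finite étale double cover onto its image. It is finite (proper by Proposition~\ref{L:nu_n-closed-image}, quasi-finite by Proposition~\ref{P:tiny-fibers}) and unramified (Corollary~\ref{C:unram-nu}), with reduced fibers of length $\le 2$. As $\CalCam_2$ is irreducible, its scheme-theoretic image $W$ is integral; since the fiber length is upper semicontinuous on $W$, equals $2$ generically, and never exceeds $2$, it must equal $2$ at every point of $W$. A finite morphism onto an integral base with constant fiber length has locally free pushforward, so $\nu_2$ is flat, and being also unramified it is étale. Smoothness then descends: $\CalCam_2$ is smooth (Proposition~\ref{P:cal-cam-yummy}) and $\CalCam_2\to W$ is an étale cover, so $W$ is smooth. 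Geometrically this double-cover structure is the classical twisted-pair involution exchanging the two calibrations $C\leftrightarrow C'$, and its freeness reflects the fact, from Proposition~\ref{P:cone-int}, that the cone intersection is \emph{never} a doubled conic, so the two calibrations are always distinct.

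For $n>2$ I would prove generic injectivity by showing that the second calibration is generally destroyed by a third camera. Starting from a general calibrated configuration, the first two cones meet in $C\cup C'$ with $C'$ smooth (type (3), as above). By Lemma~\ref{lem:base-locus} the quadrics vanishing on the complete intersection $C\cup C'$ are exactly the pencil $\langle\widebar C_1,\widebar C_2\rangle$, so the conic cones containing \emph{both} $C$ and $C'$ are the finitely many rank-three members of that pencil. On the other hand Lemma~\ref{lem:conic-in-cones} shows that the cones through $C$ alone form a three-dimensional family, which cannot lie in a finite set, so a general third cone $\widebar C_3$ through $C$ does not contain $C'$. Hence $C'\not\subset\widebar C_1\cap\widebar C_2\cap\widebar C_3$, and since this triple intersection lies in $C\cup C'$ yet still contains the irreducible $C$, the conic $C$ is the unique smooth conic in $\bigcap_{i=1}^n\widebar C_i$ for $n\ge 3$. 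Thus the general fiber of $\nu_n$ is a single point, i.e.\ $\nu_n$ is generically injective.

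The step I expect to be the real content is the constancy of the fiber length of $\nu_2$ along the tangency divisor (type (4)), where naively only the smooth conic $C$ survives while the residual conic $C'$ degenerates to the doubled line through both camera centers. The semicontinuity argument forces a second point of the fiber there, which can only be a \emph{degenerate} calibration — precisely the flat limit of $C'$ that properness (Proposition~\ref{L:nu_n-closed-image}) builds into $\CalCam_2$. Making this limit geometrically transparent, and checking that the twisted-pair involution extends freely across this locus, is the delicate point; it is also the mechanism behind the $xy=t$ phenomenon whereby the total image is smooth as a family over $\ms C^n$ even though, over a \emph{fixed} choice of planar conics, it specializes to the singular essential variety.
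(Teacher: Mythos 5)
Your proposal is correct in substance, and two of its three parts track the paper's own proof closely: the generic-injectivity argument for $n>2$ is essentially identical (use Lemma~\ref{lem:conic-in-cones} to get a three-dimensional family of cones through the fixed conic $C$, use Lemma~\ref{lem:base-locus} to confine the cones containing the whole intersection $C\cup C'$ to the pencil, and conclude that a general third cone kills the residual conic), and the ``general fiber of $\nu_2$ has length $2$'' step rests, as in the paper, on genericity of the two-smooth-conics case. Where you genuinely diverge is the smoothness of the image of $\nu_2$. The paper simply cites the known description of the singular locus of the essential variety \cite[Proposition 2.1]{kileel} and observes that no singular point lies in the image. You instead argue intrinsically: $\nu_2$ is finite (Propositions~\ref{P:tiny-fibers} and~\ref{L:nu_n-closed-image}) and unramified (Corollary~\ref{C:unram-nu}); upper semicontinuity of fiber length on the integral image $W$, together with the bound of Proposition~\ref{P:tiny-fibers} and generic value $2$, forces the length to be exactly $2$ everywhere, so the pushforward is locally free, $\nu_2$ is flat, hence \'etale onto $W$, and smoothness of $W$ descends from the smoothness of $\CalCam_2$ (Proposition~\ref{P:cal-cam-yummy}). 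This is self-contained, avoids the external citation, and in effect proves Theorem~\ref{sec:comp-twist-pair} (finite \'etale onto a closed subscheme) ahead of time by a different mechanism: the paper gets that theorem from the involution of Proposition~\ref{P:calcam-involution} plus a Henselian local ring argument, whereas you get flatness from semicontinuity. Your closing observation is also on target: the constancy of fiber length across the tangency locus is exactly what requires the degenerate (doubled-line) calibrations to be honest points of $\CalCam_2$; in the paper this is built in by definition and justified by the properness argument of Proposition~\ref{L:nu_n-closed-image}, so your ``delicate point'' is already part of the framework rather than a new obligation.

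One slip to repair: you attribute the claim that two general cones through a fixed smooth conic $C$ cut out $C$ plus a \emph{smooth} residual conic to ``the Bertini argument in the proof of Proposition~\ref{P:cone-int}.'' That Bertini argument applies to a \emph{general} pencil of quadrics; a pencil of cones through $C$ always has $C$ in its base locus, its intersection is never smooth, and Bertini says nothing about the residual curve. The correct justification — and what the paper actually does — is to exhibit a single example (the cones $X^2+Y^2+Z^2$ and $Y^2+Z^2+W^2$ from the proof of Proposition~\ref{P:cone-int}, whose intersection is two smooth conics) and invoke irreducibility of $\CalCam_2$ together with openness of the condition that the residual divisor be smooth. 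Since your semicontinuity argument takes ``generic fiber length $2$'' as its input, this genericity step needs to stand on that firmer footing; once it does, the rest of your argument goes through.
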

\begin{proof}
  The projective closure of the image of a fiber of $\CalCam_2$ over $\ms C^2$
  under $\nu_2$ is known as the ``essential variety'', and its singularities are
  well-known (see \cite[\S 2.1]{kileel}); none of its singular points lies in
  the image of $\nu_2$. To study the general fiber, it suffices by the
  irreducibility of all spaces involved to produce a single example of a camera
  configuration of length two such that the fiber of $\nu_2$ has length $2$. To
  do this, it further suffices to find a single example of two conic cones
  $C_1,C_2\subset\bP^3$ whose intersection is a pair of smooth conics. One
  such example is given by the cones $X^2+Y^2+Z^2=0$ and $Y^2+Z^2+W^2=0$.

  We now show that $\nu_n$ is generically injective for $n>2$. Given a smooth
  conic $C$ in $\bP^3$, the locus in $|\ms O_{\bP^3}(2)|$ consisting of conic
  cones containing $C$ is $3$-dimensional (since such a cone is determined by
  its vertex). Thus, we can find three non-collinear conic cones that contain
  any given smooth conic $C$. On the other hand, given two conic cones
  $C_1,C_2$, the set of conic cones that vanish on their entire intersection
  $C_1\cap C_2$ is contained in the pencil spanned by $C_1$ and $C_2$. We
  conclude that if $C_1\cap C_2$ is reducible, then we can choose general cones
  $C_3,\ldots,C_n$ containing a smooth conic in $C_1\cap C_2$ such that $C_i$ is
  not in the pencil spanned by $C_1$ and $C_2$ for each $i>2$. The joint
  vanishing locus $C_1\cap C_2\cap C_3\cap\cdots\cap C_n$ is a smooth conic.
  Since this is generic behavior, this shows that $\nu_n$ is generically
  injective for all $n>2$.
\end{proof}

It is a potentially interesting problem to characterize the locus over which
$\nu_n$ is not injective, and the singular locus of its image (the
``variety of calibrated $n$-focal tensors'', which is studied for
$n=3$ in coordinatized form in \cite{kileel2017minimal}).

\begin{cor}\label{cor:decal-finite}
  The morphism $\nu_n$ is finite.
\end{cor}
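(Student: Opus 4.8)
The plan is to deduce finiteness from the two immediately preceding results by invoking the standard criterion that a proper, quasi-finite morphism is finite.

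First I would record that $\nu_n$ is proper by Proposition~\ref{L:nu_n-closed-image}; in particular it is separated and of finite type over $\Cam_n \times \ms C^n$. Next I would use Proposition~\ref{P:tiny-fibers}, which shows that each scheme-theoretic fiber $\nu_n^{-1}(\xi)$ is a reduced $\kappa(\xi)$-scheme of length at most $2$, and hence zero-dimensional; since a finite-type morphism all of whose fibers are finite is quasi-finite, this makes $\nu_n$ quasi-finite. Combining these two observations gives that $\nu_n$ is simultaneously proper and quasi-finite.

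The conclusion would then follow from the fact that a proper, quasi-finite morphism that is locally of finite type is finite. Because $\Cam_n$, $\CalCam_n$, and $\ms C^n$ are algebraic spaces of finite type over $\Spec\bZ$ rather than schemes, I would apply the version of this criterion for algebraic spaces (a form of Zariski's Main Theorem; see \cite{stacks-project}) in place of the classical scheme-theoretic statement. I expect no genuine obstacle here: all of the substantive geometry---the properness and the bound on fiber length---is already carried out in Propositions~\ref{L:nu_n-closed-image} and~\ref{P:tiny-fibers}, so the only point requiring any care is the bookkeeping needed to apply the finiteness criterion in the algebraic-spaces setting. (One could alternatively note that $\nu_n$ is unramified by Corollary~\ref{C:unram-nu}, so that it is already known to be proper and quasi-finite via the finiteness of its fibers, and then the same criterion applies verbatim.)
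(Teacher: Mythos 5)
Your proposal is correct and is exactly the paper's argument: the paper's proof is the one-line observation that $\nu_n$ is quasi-finite (Proposition~\ref{P:tiny-fibers}) and proper (Proposition~\ref{L:nu_n-closed-image}), hence finite. Your version just spells out the same deduction with the appropriate care about the algebraic-spaces form of the proper-plus-quasi-finite criterion.
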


\begin{proof}
  We have shown that $\nu_n$ is quasi-finite and proper and thus, finite.
\end{proof}


\begin{question}
Is the singular locus of the image of $\CalCam_n$, for $n>2$, equal to the locus over which the fiber of $\nu_n$ has length $2$?
\end{question}

\subsection{Twisted pairs and moduli}
\label{sec:twisted-pairs-intro}

In this section we study the morphism $\nu_2$ in more detail, showing how the
Hilbert scheme gives a natural compactification of the classical ``twisted
pair'' construction. To explicitly compare this new treatment with the
literature, in this section we will fix the calibrating conics to be
$v(x_0^2+x_1^2+x_2^2) \subset \bP^2$. Also, we will often think of an essential
matrix as the corresponding pair of calibrated cameras in normalized
coordinates. In these coordinates we can fix notation $P_1 = [I|0]$ and $P_2 =
R[I|t]$ where $t=(a,b,c)$.

\subsubsection{Twisted pairs}

As shown in  5.2 of \cite{MR1226451}, the locus $\mathcal M$ of essential
matrices is smooth (over $\C$) and admits an \'{e}tale surjection
$\SO(3)\times\bP^2\to\mathcal M$, coming from composing a camera with a rotation
and a translation, up to scaling. In terms of matrices we send $(R,t)$ to the
camera pair $P=[I| 0], Q=[R|t]$ which has essential matrix $[t]_\times R$. One
can check in local coordinates that the map is \'etale
\cite[3.2]{demazure:inria-00075672}.

For any \emph{real\/} essential matrix $M\in\mathcal M(\R)$, the fiber
of $\pi$ over $M$ contains two points: one can take a pair of cameras
$P_1,P_2$ and replace it with the pair $P_1,\widetilde P_2$ where
$\widetilde P_2$ results from rotating $P_2$ by 180 degrees around the
axis connecting the centers of $P_1$ and $P_2$. In normalized
coordinates, the matrix
  \[
    R_t = 
    \begin{pmatrix}
      2a^2-1 & 2ab & 2ac & 0 \\
      2ab & 2b^2-1 & 2b c & 0 \\
      2ac & 2bc & 2c^2-1 & 0 \\
      0 & 0 & 0 & 1
    \end{pmatrix}
  \]
  is rotation by 180 degrees and $\widetilde P_2 = R[I|t] R_t $. (Note
  that over the reals we can always rescale $t$ so that
  $a^2+b^2+c^2=1$. ) 
  The pair $(P_1,P_2), (P_1,\widetilde P_2)$ is
  called a \emph{twisted pair\/}; what we have described is a
  well-known construction in computer vision \cite[Result 
9.19]{hartley2003multiple}. The key thing to note is that the rotation
  construction described above \emph{preserves calibrations\/} for
  real cameras. For complex cameras, things get more complicated, and
  for displacements $(a,b,c)$ such that $a^2+b^2+c^2=0$, the
  corresponding transformation produces a new camera pair
  $(P_1,\widetilde P_2)$ for which $\widetilde P_2$ is no longer
  calibrated.

\subsubsection{Compactification of the twisted pair construction}

The morphism $$\nu_2: \CalCam_2 \to \Cam_2 \times \ms C^2$$ gives a double
covering of a closed subscheme that generalizes the twisted pair covering of the
essential variety. A point of $\CalCam_2$ is the datum $(P_1, P_2, C)$ where
$P_1$ and $P_2$ are cameras and $C$ is a planar curve of degree $2$ contained in
the intersection of the cones defined by the preimage of $C_{\univ}$ via $P_1$
and $P_2$. \cref{P:cone-int} tells us that this intersection must contain either
another non-degenerate conic or a doubled line. In either case denote this other
degree two curve by $\widetilde C$. The general fibers of $\nu_2$ are the
triples $(P_1, P_2, C)$ and $(P_1, P_2, \widetilde C)$.

This double covering agrees with the twisted pairs covering on real points. In
normalized coordinates $\widetilde C$ is defined by the simultaneous vanishing
of
\[
  x^2+y^2+z^2=0 \text{ and } (a^2+b^2+c^2)w - 2(ax+by+cz) = 0.
\]
When $a^2+b^2+c^2=1$, as it must over $\R$ (up to scaling), one can check that 
changing coordinates on $\bP^3$ via the automorphism
\[
  H = 
  \begin{pmatrix}
    1 & 0 & 0 & 0 \\
    0 & 1 & 0 & 0 \\
    0 & 0 & 1 & 0 \\
    -2a & -2b & -2c & 1
  \end{pmatrix}
\]
sends the triple $(P_1, \widetilde P_2, C)$ to the triple $(P_1, P_2, \widetilde 
C)$.

However, over the complex numbers there exist essential matrices such
that $a^2+b^2+c^2=0$. This is exactly the condition that
$\widetilde C$ is a doubled line. In this situation the twisted pair
construction fails because the camera $\widetilde P$ no longer has a
trivial calibration. Mathematically speaking, we are really discussing
the fact that the twisted pairs morphism $\pi$, while always \'etale,
is \emph{not\/} finite. Allowing degenerate calibrations (doubled
lines) extends the twisted pair morphism $\pi$ to $\nu_2$.

\begin{prop}\label{P:calcam-involution}
  There exists a fixed-point free involution, $\chi: \CalCam_2 \to \CalCam_2$
  over $\Cam_2$ given by fixing the cameras and swapping calibrating curves.
  More precisely, $\nu_2 \circ \chi = \nu_2$.
\end{prop}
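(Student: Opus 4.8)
The plan is to build $\chi$ explicitly from the residual-curve (linkage) construction furnished by Proposition \ref{P:cone-int}, and then to verify the three required properties—that $\chi$ is an involution, that it commutes with $\nu_2$, and that it has no fixed points—fiberwise, promoting everything to families by a flatness argument. I would work over the universal calibrated configuration on $\CalCam_2$. Given an object $(\bPhi, C)$ over a base $T$ with $\bPhi=(P_1,P_2)$, form for each $i$ the conic cone $\widebar C_i\subset\bP^3_T$ swept out by the fibers of $\pr_i\circ\bPhi$ over the image conic, and set $I=\widebar C_1\cap\widebar C_2$. Since the cone points are the distinct camera centers, the two defining quadrics form a relative regular sequence, so $I$ is a relative complete intersection of degree $4$ containing the degree-two curve $C$. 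I would then define $\widetilde C$ as the residual of $C$ in $I$, i.e. via the ideal quotient $\mathcal I_{\widetilde C}=(\mathcal I_I:\mathcal I_C)$; equivalently, $C$ and $\widetilde C$ are directly linked by the complete intersection $I$. Setting $\chi(\bPhi,C)=(\bPhi,\widetilde C)$ gives the candidate morphism.

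Next I would check the three properties. Involutivity is formal from liaison: the residual of the residual of $C$ in $I$ is again $C$, so $\chi^2=\mathrm{id}$. The identity $\nu_2\circ\chi=\nu_2$ is immediate, since $\chi$ alters only the curve in $\bP^3$ and leaves $\bPhi$ and the image-plane conics untouched; one must also confirm that $(\bPhi,\widetilde C)$ is again a legitimate object of $\CalCam_2$, i.e. that $\widetilde C$ is a valid (possibly degenerate) calibration datum. This, together with fixed-point-freeness, is exactly where Proposition \ref{P:cone-int} does the work: whenever $I$ contains a degree-two curve $C$, the classification forces $I$ to be either a union of two smooth conics or a union of a smooth conic and a doubled line, so $\widetilde C$ is always a degree-two curve (smooth conic or doubled line) and, crucially, $\widetilde C\neq C$ because a doubled conic never occurs. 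Fixed-point-freeness then follows, since $\chi(\bPhi,C)=(\bPhi,C)$ would force $C=\widetilde C$.

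The main obstacle is promoting this pointwise picture to an honest morphism of algebraic spaces compatible with $\nu_2$: one must show that the residual $\widetilde C$ forms a $T$-flat family of degree-two curves and that its formation commutes with arbitrary base change $T'\to T$. Here fiberwise constancy of the Hilbert polynomial—always $2t+1$, by the classification, since a planar smooth conic and a planar doubled line in $\bP^3$ share this Hilbert polynomial—together with the reducedness of $\CalCam_2$ (it is smooth) should give flatness of $\widetilde C$; base-change compatibility of the ideal quotient should then follow from the flatness of $I$ and of $C$ over $T$ and the constancy of the relevant residual cohomology, via the liaison exact sequence. This flat-in-families bookkeeping, rather than any single computation, is the genuinely delicate part.

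An attractive alternative, which sidesteps the explicit residual bookkeeping, is structural. By Corollaries \ref{C:unram-nu} and \ref{cor:decal-finite} the map $\nu_2$ is finite and unramified, and by Proposition \ref{P:nu_n-smooth} it has smooth image with general fiber of length $2$; the residual argument above shows the fiber over \emph{every} point of the image has length exactly $2$, so $\nu_2$ is a finite \'etale double cover onto its image. Since $\CalCam_2$ is irreducible (Theorem \ref{T:main-uncal}), this is a connected \'etale double cover, whose canonical nontrivial deck transformation is automatically a fixed-point-free involution $\chi$ satisfying $\nu_2\circ\chi=\nu_2$. Either route yields the result; I would lead with the explicit residual construction, because it realizes $\chi$ concretely as ``swap the two degree-two curves cut out in the intersection of the cones,'' matching the twisted-pair description given above.
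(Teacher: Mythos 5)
Your construction is, at its geometric core, the same as the paper's: realize $\chi$ by sending $C$ to the residual degree-two curve in $F=X_1\cap X_2$, the intersection of the two pulled-back cones, and use Proposition \ref{P:cone-int} for exactly the two points you cite (the residual is again a degree-two curve, and the impossibility of a doubled conic gives fixed-point freeness). Where you differ is in how the residual is built \emph{in families}, and your own diagnosis is accurate: that is the only delicate point. You propose liaison, $\mathcal{I}_{\widetilde C}=(\mathcal{I}_I:\mathcal{I}_C)$ on $\bP^3_T$, and then need $T$-flatness and base-change compatibility; but as written your argument is slightly circular, since the constant-Hilbert-polynomial criterion applies to the fibers of your ideal-quotient family, and knowing that those fibers coincide with the fiberwise residuals (hence have Hilbert polynomial $2t+1$) is precisely the base-change statement you are trying to prove. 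This can be repaired --- e.g.\ identify $\mathcal{I}_{\widetilde C}/\mathcal{I}_I$ with the sheaf $\mathcal{H}om_{\mathcal{O}_I}(\mathcal{O}_C,\mathcal{O}_I)$ and use that $\mathcal{O}_I$ is Gorenstein and $\mathcal{O}_C$ is Cohen--Macaulay to kill the higher Ext sheaves --- but the paper sidesteps the whole issue with one device you did not find: blow up the two camera centers, so that the strict transforms of the cones become \emph{smooth} surfaces whose intersection is identified with $F$ (the cone points being distinct); then $C\subset F$ are relative effective Cartier divisors on a smooth surface, and the residual is simply the divisor difference $C'$ with $C'+C=F$ \cite[Tag 0B8V]{stacks-project}, for which flatness, compatibility with base change, involutivity, and $\nu_2\circ\chi=\nu_2$ are all essentially automatic. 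Your alternative ``structural'' route (deck transformation of a finite \'etale double cover) is fine but not independent: the input that \emph{every} nonempty fiber has length exactly $2$ is again the residual computation, and you additionally need connectedness of $\CalCam_2$ and the fact that a finite unramified morphism onto a reduced image with constant geometric fiber count is flat. So: right idea, right worry; the blowup-and-Cartier-divisor trick is what the paper uses to make the family bookkeeping disappear.
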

\begin{proof}
  Given a pair of cameras $\bPhi \to \bP^2 \times \bP^2$ and smooth
  conics $D_1, D_2 \subset \bP^2$, we can pull back to get two cones
  $X_1, X_2 \subset \bP^3$. Let $F= X_1 \cap X_2$. Blowing up the
  camera centers, the strict transform of these cones,
  $\tilde{X_1}, \tilde{X_2} \subset \Bl_{Z_1, Z_2} \bP^3$, are smooth
  surfaces in $\bP^3$. The intersection is a relative effective
  Cartier divisor and $\tilde{X_1} \cap \tilde{X_2} \simeq F$ since
  the cone centers are distinct.
  
  A point in $\CalCam_2$ is a pair $(\bPhi, C)$ where $C$ is a
  relative effective Cartier divisor contained in $F$. By \cite[Tag
  0B8V]{stacks-project} there exists another relative effective
  Cartier divisor $C'$ such that $C' + C = F$. Checking at a geometric
  point, \cref{P:cone-int} shows that $C'$ is a degree 2
  curve, and that no geometric point of $\CalCam_2$ is fixed by $\chi$. This 
argument is functorial and so induces the desired
  involution. Since $\chi$ only changes the calibrating conic we have 
  $\nu_2 \circ \chi = \nu_2$.
\end{proof}

\begin{thm}\label{sec:comp-twist-pair}
  The morphism $\nu_2$ factors as a finite étale morphism followed by
  a closed immersion.
\end{thm}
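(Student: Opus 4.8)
The plan is to realize the claimed factorization as the quotient of $\CalCam_2$ by the free involution $\chi$ of Proposition \ref{P:calcam-involution}, followed by the induced map to $B:=\Cam_2\times\ms C^2$. First I would collect the properties of $\nu_2$ already established: it is finite (Corollary \ref{cor:decal-finite}) and unramified (Corollary \ref{C:unram-nu}), and by Proposition \ref{P:tiny-fibers} each scheme-theoretic fiber is reduced of length at most $2$. The key observation is that, since $\chi$ is an involution with $\nu_2\circ\chi=\nu_2$ and no geometric fixed points, $\chi$ permutes each fiber of $\nu_2$ freely; combined with the length bound this forces every nonempty fiber to be exactly a free $\chi$-orbit of length $2$, a length-one reduced fiber being impossible as it would contain a fixed point. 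In particular $\nu_2$ has constant fiber degree $2$ onto its image, consistent with Proposition \ref{P:nu_n-smooth}.

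Next I would form the quotient $q\colon\CalCam_2\to Q:=\CalCam_2/\langle\chi\rangle$. Since $\chi$ generates a free action of $\bZ/2$ on the separated algebraic space $\CalCam_2$, the quotient exists as an algebraic space and $q$ is a $\bZ/2$-torsor, hence finite \'etale of degree $2$ (free $\bZ/2$-quotients are \'etale in every characteristic, as the Artin--Schreier picture shows). Because $\nu_2\circ\chi=\nu_2$, the universal property of the quotient yields a unique $\bar\nu\colon Q\to B$ with $\nu_2=\bar\nu\circ q$, and it remains to prove that $\bar\nu$ is a closed immersion.

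To that end I would verify that $\bar\nu$ is a finite, unramified, universally injective morphism: a finite morphism is proper, an unramified universally injective morphism is a monomorphism (the diagonal being an open immersion that is surjective, hence an isomorphism), and a proper monomorphism is a closed immersion. Finiteness of $\bar\nu$ follows by descent along the finite surjection $q$: the composite $\bar\nu\circ q=\nu_2$ is proper and $q$ is surjective, so $\bar\nu$ is proper, and its fibers are single points, so it is quasi-finite, hence finite. For unramifiedness, $q$ is faithfully flat and \'etale, so $q^\ast\Omega_{Q/B}\cong\Omega_{\CalCam_2/B}$, and the latter vanishes because $\nu_2$ is unramified; faithful flatness then gives $\Omega_{Q/B}=0$. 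For universal injectivity I would show that the diagonal $Q\to Q\times_B Q$ is surjective after arbitrary base change, using the fiber description: since the fibers of $\nu_2$ are free $\chi$-orbits, $\CalCam_2\times_B\CalCam_2$ is set-theoretically the disjoint union of the diagonal and the graph of $\chi$ (both open, as $\nu_2$ is unramified), and both components map onto the diagonal of $Q\times_B Q$ because $q\circ\chi=q$; surjectivity of $q\times_B q$ then forces $Q\to Q\times_B Q$ to be surjective.

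The main obstacle is the fiber analysis in the first paragraph: pinning down that every nonempty fiber is genuinely a free orbit of length exactly $2$, with no degenerate length-one fibers, and that this orbit structure persists under base change so the universal injectivity argument applies. This rests on the classification of cone intersections in Proposition \ref{P:cone-int}, which guarantees that the residual degree-two curve $\widetilde C$ is always present and distinct from $C$ (possibly as a doubled line), together with the functoriality of $\chi$ recorded in Proposition \ref{P:calcam-involution}. Once the orbit structure is established uniformly, the remaining verifications are formal.
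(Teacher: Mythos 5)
Your proposal is correct, and it factors $\nu_2$ through a genuinely different intermediate space than the paper does. The paper starts from the scheme-theoretic image: finiteness of $\nu_2$ (Corollary \ref{cor:decal-finite}) makes it closed, giving $\CalCam_2\to Z\hookrightarrow\Cam_2\times\ms C^2$ with the closed immersion for free, and the work is to prove the first arrow is \'etale; this is done by pulling back to a strictly Henselian local ring of $Z$, splitting the resulting finite unramified cover into closed embeddings, and using the involution $\chi$ of Proposition \ref{P:calcam-involution} to see that the two pieces map isomorphically onto the base. You run the argument in the opposite order: since $\chi$ is fixed-point free, $q\colon\CalCam_2\to Q=\CalCam_2/\langle\chi\rangle$ is a $\bZ/2$-torsor, so the finite \'etale half is automatic (the correct justification is the torsor property of a free action by a finite group, rather than your Artin--Schreier aside, but the claim is right), and the work shifts to showing $\bar\nu\colon Q\to\Cam_2\times\ms C^2$ is a closed immersion, which you do via proper $+$ unramified $+$ universally injective $\Rightarrow$ proper monomorphism $\Rightarrow$ closed immersion. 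The two routes are dual, each making one half of the factorization trivial and spending the same inputs (Corollary \ref{cor:decal-finite}, Corollary \ref{C:unram-nu}, Proposition \ref{P:tiny-fibers}, Proposition \ref{P:calcam-involution}) on the other half. Your route buys two things the paper's does not make explicit: it upgrades the statement that the \emph{general} fiber of $\nu_2$ has length $2$ (Proposition \ref{P:nu_n-smooth}) to the statement that \emph{every} nonempty geometric fiber has length exactly $2$, since a length-one reduced fiber would be a fixed geometric point of $\chi$ --- a sharpening that the paper's Henselian splitting step implicitly needs as well --- and it identifies the image $Z$ intrinsically as the free quotient $\CalCam_2/\chi$, making its smoothness manifest. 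The background facts you invoke (existence of the quotient of a separated algebraic space by a free finite group action, descent of properness and finite type along the finite surjection $q$, vanishing of $\Omega_{Q/B}$ by faithfully flat descent, and the equivalence of universal injectivity with surjectivity of the diagonal) are all standard, so the remaining verifications are indeed formal as you say.
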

\begin{proof}
  By Corollary \cref{cor:decal-finite}, $\nu_2$ is a finite morphism, hence
  closed. This yields a factorization $\CalCam_2\to Z\to\Cam_2$ with the second
  arrow a closed immersion and the first scheme-theoretically surjective.
  Let $A$ be a strictly Henselian local ring and $\Spec A\to Z$ a
  morphism. The finiteness of $\nu_2$ yields a diagram
  \begin{cd}
    \spec B \rar \dar{\psi} & \CalCam_2 \dar{\nu_2} \\
    \spec A \rar & \Cam_2
  \end{cd}
  By \cite[Tag 04GH]{stacks-project}, $B$ is the product of local
  Henselian rings. By \cref{P:nu_n-smooth}, the general
  fibers of $\psi$ are length $2$, corresponding to the two possible
  calibrating conics, so $\spec B \simeq \spec B_1 \sqcup \spec
  B_2$. By Corollary \cref{C:unram-nu}, $\psi$ is
  unramified, and thus (by \cite[Tag 04GL]{stacks-project}) restricts
  to a closed embedding on each $\spec B_i$. 
  \begin{cd}
    \spec B_i \rar \ar[hook]{dr} & \spec B_1 \sqcup \spec B_2 \rar \dar{\psi}   
& \CalCam_2 \dar  \\
    & \spec A \rar & \Cam_2
  \end{cd}
  The involution described in \cref{P:calcam-involution}
  induces an isomorphism $f: \spec B_1 \to \spec B_2$. In other words
  both components map isomorphically to the image, so $\nu_2$ is
  étale over $Z$, as claimed.
\end{proof}

\subsection{Morphisms to Hilbert schemes}
\label{sec:morph-hilb}

The following describes the main result relating the moduli problems $\Cam_n$
and $\CalCam_n$ to Hilbert schemes. This gives the generalization of the results
of \cite[Theorem 6]{aholt2011hilbert}, leveraging the novel methods of this
paper to give more information about the uncalibrated case and the appropriate
result in the calibrated case.

\begin{prop}\label{prop:hilb-map}
  The associations $$\bPhi\mapsto\Scheme(\bPhi)$$
  and $$(\bPhi,C)\mapsto\Flag(\bPhi,C)$$ define 
monomorphisms
$$\Scheme:\Cam_n\to\Hilb_{(\bP^2)^n/\Spec\Z[1/2]}$$ and
$$\Flag:\CalCam_n\to\Hilb_{C_\univ^n\subset(\bP^2)^n/\ms C^n}$$
 such that
\begin{enumerate}[leftmargin=1.5cm]
\item when $n>2$, the morphism $\Scheme$ (respectively, $\Flag$) itself is an 
open immersion into
  $\Hilb_{(\bP^2)^n/\Spec\Z[1/2]}^\sm$ (respectively, 
  $\Hilb_{C_\univ^n\subset(\bP^2)^n/\ms C^n}^\sm$);
\item the arrows $\Scheme$ and $\Flag$ together with the forgetful
  maps give a commutative diagram
$$
\begin{tikzcd}
\CalCam_n\ar[d, "\nu_n"']\ar[r, "\Flag"] &
\Hilb_{C_\univ^n\subset(\bP^2_{\ms C^n})^n/\ms C^n}\ar[d] \\
\Cam_n\times_{\Spec\Z[1/2]}\ms C^n\ar[r, "\Scheme"'] & \Hilb_{(\bP_{\ms C^n}^2)^n/\ms 
C^n}.
\end{tikzcd}
$$
\end{enumerate}
In particular, every geometric fiber of $\Scheme$ over $\Spec\Z[1/2]$ is an open immersion of $\Cam_n$ into the smooth locus of a single irreducible
component of the Hilbert scheme,
and similarly for geometric fibers of $\Flag$ and components of the
diagram Hilbert scheme.
\end{prop}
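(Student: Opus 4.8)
The plan is to assemble Proposition \ref{prop:hilb-map} from three ingredients already established: the base-change compatibility that produces the morphisms, the scheme-theoretic rigidity of Proposition \ref{prop:iso-iso} that makes them monomorphisms, and the deformation-theoretic equivalences of Theorems \ref{thm:defos-same} and \ref{thm:cal-defos-same} that upgrade them to open immersions on the appropriate loci. First I would check that $\Scheme$ and $\Flag$ are morphisms to the asserted targets. For $\Scheme$, Proposition \ref{base-change-image} says that $\Scheme(\bPhi)$ is flat over the base and that its formation commutes with arbitrary base change, which is exactly the datum classified by $\Hilb_{(\bP^2)^n/\Spec\Z}$. For $\Flag$, the same proposition together with the flatness of the calibrating curve $C$ and of the universal conics produces a flat family of the nested diagram $C\subset\Scheme(\bPhi)$ inside $C_1\times\cdots\times C_n\subset(\bP^2)^n$, i.e.\ a point of the diagram Hilbert scheme of Definition \ref{defn:diag-hilb}, representable by Proposition \ref{P:diag-hilb-repr}. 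That $\Scheme$ is a monomorphism is then a direct restatement of Proposition \ref{prop:iso-iso}: two $S$-configurations have equal multiview schemes if and only if they are isomorphic, i.e.\ define the same $S$-point of $\Cam_n$. For $\Flag$ I would deduce the same from Proposition \ref{prop:iso-iso} plus the extra flag data: equality of flags forces equality of the ambient multiview schemes, hence an isomorphism $\varepsilon$ of the underlying configurations, and equality of the small members of the flags forces $\varepsilon(C^1)=C^2$, so $\varepsilon$ is an isomorphism of \emph{calibrated} configurations.

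For the open-immersion statements the strategy is to show the map is formally \'etale on the relevant locus and then invoke the standard fact that a formally \'etale monomorphism locally of finite presentation is an open immersion. Theorem \ref{thm:defos-same} says precisely that, on $\Cam_n^\nc$ for every $n$ and on all of $\Cam_n$ when $n>4$, the map $\Scheme$ induces a bijection between infinitesimal deformations of $\bPhi$ and infinitesimal deformations of $\Scheme(\bPhi)$ in $(\bP^2)^n$; d\'evissage through square-zero extensions promotes this to formal \'etaleness on that locus, and combined with the monomorphism property it yields an open immersion. Because the source is smooth (it is $[\M^n/\GL_4]$ with $\M^n$ smooth), an open immersion identifies the image with a smooth open subscheme, which therefore lies in $\Hilb^\sm$. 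The calibrated case is verbatim, using Theorem \ref{thm:cal-defos-same} and the smoothness of $\CalCam_n$ over $\ms C^n$ from Proposition \ref{P:cal-cam-yummy}. The commutative diagram in part (3) is then tautological, since both $\nu_n$ and the right-hand vertical arrow forget the calibrating curve $C$ while remembering the same ambient multiview scheme.

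Finally, for the concluding clause I would pass to a geometric fiber over $\Spec\Z$. Since $\M^n$ is irreducible, $\Cam_n$ is geometrically irreducible and the non-collinear locus $\Cam_n^\nc$ is dense and open. On $\Cam_n^\nc$ the map $\Scheme$ is an open immersion, so its image is an irreducible open subscheme of the Hilbert scheme whose closure is a single irreducible component; as $\Cam_n^\nc$ is dense in $\Cam_n$ and $\Scheme$ is a morphism, the image of all of $\Cam_n$ lands in this component and is dense in it, giving the asserted dominant monomorphism into one component. When $n>4$ the map $\Scheme$ is itself an open immersion, so the fiber is open in the smooth locus, and the identical argument applies to $\Flag$ and the diagram Hilbert scheme.

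The genuine mathematical content lives entirely in the deformation equivalences of Theorems \ref{thm:defos-same} and \ref{thm:cal-defos-same}, which are already in hand; within this Proposition the work is the formal assembly, so the main obstacle is really a matter of care rather than depth. The one point deserving attention is recognizing formal \'etaleness from the deformation-functor bijection and running the parallel bookkeeping for the diagram Hilbert scheme in the calibrated case. It is worth emphasizing why the open immersion is claimed only on the non-collinear locus when $n\le 4$: there the deformation bijection genuinely fails, since the Hilbert scheme acquires deformations of the contracted line that do not come from camera configurations, so $\Scheme$ is no longer surjective on tangent spaces and cannot be \'etale at those points.
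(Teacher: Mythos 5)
Your proposal is correct and follows essentially the same route as the paper's own (much terser) proof: well-definedness via Proposition \ref{base-change-image}, the monomorphism property via Proposition \ref{prop:iso-iso}, and the open-immersion statements by combining smoothness of the source with the deformation-functor isomorphisms of Theorems \ref{thm:defos-same} and \ref{thm:cal-defos-same}. In fact you fill in several steps the paper leaves implicit (the flag bookkeeping for the monomorphism claim about $\Flag$, part (3), and the irreducibility argument for the concluding clause), all correctly.
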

\begin{proof}
  \cref{base-change-image} and 
 \cref{prop:iso-iso} show that $\Flag$ is a well-defined 
  monomorphism. Since $\CalCam_n$ is smooth over $\ms C^n$, we have
  that $\Flag$ is an open immersion in a neighborhood of 
  any point where it induces an isomorphism of deformation
  functors. 
 \cref{thm:cal-defos-same} then applies to give the two desired statements. 
\end{proof}

\section{Questions}\label{sec:questions}

In this section, we briefly discuss questions raised by this work, and suggest some directions for future investigation.

\begin{question}
  What concrete computational consequences follow from functorial methods?
\end{question}
We believe that the techniques described here may be useful for studying the
numerical properties of multiview geometry. For example, in \cite{essvar}, we
will give an explicit equation for the fiber of $\CalCam_2$ over the pair of
standard Euclidean conics, which appears as a double cover of the essential
variety extending the twisted pair construction. It is given by the vanishing of
a single bilinear form on $\bP^3\times\bP^3$. This can be used to rederive the
main results of \cite{demazure:inria-00075672}, and to rephrase the five-point
algorithm in terms of intersections of six bilinear forms in $\bP^3\times\bP^3$
instead of the nine Demazure cubics and five linear forms. This is also related
to the results of \cite{kileel}, but the derivations are completely different
and independent of \cite{demazure:inria-00075672} (which is used in an essential
way in \cite{kileel}).

\begin{question}
  What is the correct boundary for $\Cam_n$ (resp. $\CalCam_n$)?
\end{question}
Is there a extension of our moduli theory to handle degenerate configurations,
where camera centers collide? Should these models include degenerations of image
planes along the lines of Hacking's approach \cite{MR2078368}? Is there a good
moduli theory for pairs $(X,C)$ consisting of a threefold with an embedded
curve? These might be useful for studying degenerations of the ambient space
together with its calibrating curve.

\begin{question}
  What is the right general formulation of Carlsson--Weinshall duality?
\end{question}
Carlsson--Weinshall duality is somewhat mysterious from the point of view taken
here. One can think about it in terms of birational isomorphisms of universal
correspondences. It would be interesting to get a deeper understanding of this
phenomenon.

\end{document}